\date{}
\newtheorem{proposition}{Proposition}[section]
\newtheorem{theorem}[proposition]{Theorem}
\newtheorem{lemma}[proposition]{Lemma}
\newtheorem{example}[proposition]{Example}
\newtheorem{definition}[proposition]{Definition}
\newtheorem{corollary}[proposition]{Corollary}
\def\der{\partial }
\def\nFM0{{\nu }_{F,M_0}}
\def\nFN0{{\nu }_{F,N_0}}
\def\nGN0{{\nu }_{G,N_0}}
\def\N0{ {\bf N}_0 }
\def\t{\otimes}
\def\ra{\rightarrow}
\def\Xpm{X^{\pm }}
\def\s{\sigma}
\def\l1{{\lambda}_1}
\def\a{\alpha}
\def\a0{ {\alpha }_0}
\def\a1{ {\alpha }_1}
\def\l{\lambda}
\def\o{\omega}
\def\nFGM0{{\nu }_{F,G,M_0}}
\def\nFN0{{\nu}_{F,N_0}}
\def\sm{{\sigma}^m}
\def\sm1{{\sigma}^{-1}}
\def\smtp1{{\sigma}^{-t+1}}
\def\o{\omega }
\def\S1{S^{-1}}
\def\Xpm1{X^{\pm 1}_1}
\def\sPM1{{\sigma }^{\pm 1}}
\def\sMP1{{\sigma }^{\mp 1 }}
\def\b{\beta}
\def\di{{\rm d.ind}}
\def\L{\Lambda}
\def\Ytm1{Y^{t-1}}
\def\Yim1{Y^{i-1}}
\def\ass{{\rm ass}}
\def\CZ{{\cal Z}}
\def\Aut{{\rm Aut}}
\def\ker{ {\rm ker } }
\def\SL2Z{ {\rm SL}_2({\bf Z}) }
\def\CZ{ {\cal Z}}
\def\Gp1{ G^{1 , 1 } }
\def\P11{ P^{-1 , 1 } }
\def\Pp1{ P^{1 , 1 } }
\def\nCLsr{{}^\nu\kern-2pt {\cal L}^{\sigma , \rho  }}
\def\nP{{}^\nu \kern-2pt P}
\def\nL{{}^\nu\kern-2pt L}
\def\nLL{{}^\nu\kern-2pt \Lambda}
\def\nPsr{{}^\nu\kern-2pt P^{\sigma , \rho  }}
\def\nLsr{{}^\nu\kern-2pt L^{\sigma , \rho  }}
\def\nuCL{{}^\nu\kern-2pt  {\cal L}}
\def\nCLsr{{}^\nu\kern-2pt {\cal L}^{\sigma , \rho  }}
\def\nCL1m{{}^\nu\kern-2pt {\cal L}^{-1 , 1  }}
\def\x1nu{x^\frac{1}{\nu}}
\def\xm1nu{x^{-\frac{1}{\nu}}}
\def\rad{{\rm rad}}
\def\ra{\rightarrow }
\def\CB{{\cal B}}
\def\CI{{\cal I}}
\def\CC{ {\cal C}}
\def\nAM0{{\nu }_{{\cal A},M_0}}
\def\nAN0{{\nu }_{{\cal A},N_0}}
\def\bR{\overline{R}}
\def\ga{\mathfrak{a}}
\def\gb{\mathfrak{b}}
\def\gc{\mathfrak{c}}
\def\gn{\mathfrak{n}}
\def\gp{\mathfrak{p}}
\def\gq{\mathfrak{q}}
\def\SL{{\rm SL}}
\def\Spec{{\rm Spec}}
\def\di!{\frac{\der^i}{i!}}
\def\dik!{\frac{\der^k_i}{k!}}
\def\gl{\mathfrak{l}}
\def\id{{\rm id}}
\def\N{\mathbb{N}}
\def\0{\overline{0}}
\def\1{\overline{1}}
\def\Ln1{\L_{n,\overline{1}}}
\def\a1{a_{\overline{1}}}
\def\bs{\overline{s}}
\def\S{\Sigma}
\def\vn1{\overrightarrow{n-1}}
\def\im{{\rm im}}
\def\gl{{\rm gl}}
\def\sl{{\rm sl}}
\def\mJ{\mathbb{J}}
\def\mI{\mathbb{I}}
\def\ann{{\rm ann}}
\def\lann{{\rm l.ann}}
\def\rann{{\rm r.ann}}
\def\bgp{\overline{\gp}}
\def\K1{{\rm K}_1}
\def\hmI1{\widehat{\mI_1}}
\def\tmI1{\widetilde{\mI_1}}
\def\tmJ1{\widetilde{\mJ_1}}
\def\hB1{\widehat{B_1}}
\def\hCB1{\widehat{\CB_1}}
\def\bS{\overline{S}}
\def\Den{{\rm Den}}
\def\Ore{{\rm Ore}}
\def\Den{{\rm Den}}
\def\Loc{{\rm Loc}}
\def\Ass{{\rm Ass}}
\def\maxDen{{\rm max.Den}}
\def\br{\overline{r}}
\def\bs{\overline{s}}
\def\ga{\mathfrak{a}}
\def\tor{{\rm tor}}
\def \S{\mathcal{S}}
\def\sl2{\mathfrak{sl}_2}
\def\sl2{\mathfrak{sl}_2}
\def\gl2{\mathfrak{gl}_2}
\def\b1{\overline{1}}
\def\Denlip{{\rm Den}_{l, ip}}
\def\RSm{ R\langle S^{-1}\rangle}
\def\gl{{\mathfrak{l}}}
\def\tbR{\widetilde{\overline{R}}}
\def\tbS{\widetilde{\overline{S}}}
\def\tbgp{\widetilde{\bgp}}
\newenvironment{proof*}[1][\proofname]{\par
  \pushQED{\qed}%
  \normalfont \partopsep=\z@skip \topsep=\z@skip
  \trivlist
  \item[\hskip\labelsep
        \itshape
    #1\@addpunct{.}]\ignorespaces
}{%
  \popQED\endtrivlist\@endpefalse
}
\begin{document}

\author{V. V. \  Bavula  
}
\title{The minimal primes of localizations of rings }

\maketitle

\begin{abstract}
The set of minimal primes of a ring is a very  important set as far the  spectrum of a ring is concerned as every prime contains a minimal prime. So, knowing the minimal primes is the first (important and difficult) step in describing the spectrum. 
In the algebraic geometry, the minimal primes of the algebra of regular functions on an algebraic variety determine/correspond to the   irreducible components of the variety.  The aim of the paper is to obtain several descriptions of the set of minimal prime ideals of localizations of rings under several natural assumptions. In particular, the following case are considered: a  localization of a semiprime ring with finite set of minimal primes; a  localization of a prime rich ring  where the localization respects the ideal structure of primes and primeness of certain minimal primes; a localization of a ring at a left denominator set generated by normal elements, and others. As an application, for a semiprime ring with finitely many minimal primes, a description of the minimal primes of it's largest left/right quotient ring is obtained. 
 
  For  a semiprime ring $R$ with finitely many minimal primes $\min (R)$, 
 criteria are given for the map $$\rho_{R,\min}: \min (R)\ra \min (Z(R)), \;\; \gp\mapsto \gp\cap Z(R)$$  being a well-defined map or  surjective  where $Z(R)$ is the centre of $R$.

$\noindent$

{\em Key words: semiprime ring, prime ring, localization, classical left  quotient ring, largest left quotient ring, Ore set, denominator set, minimal primes, prime rich ring, denominator set that respects the ideal structure of a ring.  }\\

{\em Mathematics subject classification 2020:  16U20,  16P50,  16P60, 16S85, 16U60.}

{ \small \tableofcontents}
\end{abstract}


\section{Introduction} \label{INTR} 

{\bf Notation.} In this paper, module means a left module, and the following notation is fixed:
\begin{itemize}
\item  $R$ is a ring  with  1, $R^\times$ is its group of units, 
$Z(R)$ is the centre of $R$, 
 $\gn_R$ is the  prime  radical and $\rad (R)$ is the  radical  of $R$,  and $\min (R)$ is the  set  of minimal  primes of $R$;

\item $\Spec (R)$ is the prime spectrum and $\Spec_c (R)$ is the completely prime spectrum  of $R$; 

 \item $\Den_l(R, \ga )$ is the set of left denominator sets $S$ of $R$ with $\ass_l (S)=\ga$ where $\ga$ is an ideal of $R$ and $\ass_l (S):= \{r\in R\, | \, sr=0$ for some $s\in S\}$;
 
\item For $S\in \Den_l(R, \ga)$, $\min (R, S):=\{ \gp\in \min (R)\, | \, \gp\cap S=\emptyset\}$ and $\min (R, S, \id):=\{ \gp\in \min (R)\, | \, S^{-1}\gp S^{-1}R\neq S^{-1}R\}$;
 
\item $\maxDen_l(R)$ is the set of maximal left denominator sets of $R$ (it is  a {\em non-empty} set, \cite{larglquot});

\item   $\CC_R$  is the  set of  regular  elements of the ring $R$ (i.e. $\CC_R$ is the  set of non-zero-divisors of the ring $R$);

\item   $Q_{l,cl}(R):= \CC_R^{-1}R$  is the  {\em left quotient ring}   (the {\em  classical left   ring of fractions}) of the ring $R$ (if  it  exists);

\item $S_{l,\ga }(R)$ is the {\em largest element} of the partially ordered set (poset) $(\Den_l(R, \ga ),
\subseteq )$ and $Q_{l,\ga }(R):=S_{l,\ga }(R)^{-1} R$ is  the
{\em largest left quotient ring associated to} $\ga$, $S_{l,\ga }(R) $ exists, \cite[Theorem 2.1.(2)]{larglquot};

\item In particular,
$S_l(R)=S_{l,0}(R)$ is the largest element of the poset
$(\Den_l(R, 0), \subseteq )$ and $Q_l(R):=S_0^{-1}R$ is the
largest left quotient ring of $R$; 

\item For a   commutative ring $R$  and $\gp \in \Spec (R)$,   $R_\gp:=\Big(R\backslash \gp \Big)^{-1}R$ is the localization of $R$ at the prime ideal $\gp$. 


       
\end{itemize}

{\bf Semiprime ideals and minimal primes.} Prime and semiprime rings are large and important classes of rings both in commutative and non-commutative algebra. They also have geometric flavour. The algebra of regular functions on an affine algebraic variety is a semiprime algebra and the corresponding  ideal of definition of the variety is a semiprime ideal. If, in addition, the variety is irreducible then  the algebra of regular functions on it is a prime algebra and the defining ideal  is a prime ideal. In the algebraic geometry, the minimal primes of the algebra of regular functions correspond to the irreducible components of the algebraic variety and  generators of  minimal primes are the equations of the corresponding irreducible component of the variety. In the commutative situation, 
 localizations respect many operations and concepts. This is not the case in the noncommutative setting (in general, a localization of an ideal is not an ideal, a localization of a prime ideal is not a prime ideal, there is little  connection between prime ideals of a ring and its localization, etc).

  Let us describe the main results and the structure of the paper. 
 At the beginning of Section \ref{MINPRLOCSPRIMES}, we recall some results on localizations that are used in the paper. Suppose  that $\ga$ is an ideal of a ring $R$. In general, the left ideal  $S^{-1}\ga$ of the ring  $S^{-1}R$   is {\em not} an ideal. 
 Proposition \ref{A11Sep23} is a criterion for the left ideal $S^{-1}\ga$ being an ideal of $S^{-1}R$. Corollary \ref{a6Oct23} presents criteria for a localization of a prime ideal being a prime ideal (under certain conditions).
 \\

{\bf The set $\min (S^{-1}R)$ where $R$ is a prime rich ring and $S$ respects the ideal structure of primes.}  A ring is called a {\bf prime rich ring} if each  ideal contains a finite product of prime ideals that contain it.
 Proposition \ref{aA29Sep23} is a criterion for a ring being prime rich where $\min(\ga)$ is the set of minimal primes over an ideal $\ga$ (its proof is given in Section \ref{MINPRLOCSPRIMES}) .

\begin{proposition}\label{aA29Sep23}
For a ring $R$ the following conditions are equivalent:
\begin{enumerate}
\item The ring $R$ is a prime rich ring.

\item Every ideal $\ga$ of $R$ contains a finite product of its minimal primes. 

\item For every ideal $\ga$ of $R$, $|\min (\ga)|<\infty$ and the prime radical $\gn_{R/\ga}$ of the factor ring $R/\ga$ is a nilpotent ideal.

\end{enumerate}
\end{proposition}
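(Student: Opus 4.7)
The equivalences (1)$\Leftrightarrow$(2) and (2)$\Leftrightarrow$(3) will be established by tracking how powers of the prime radical interact with finite products of minimal primes. The implication (2)$\Rightarrow$(1) is immediate since minimal primes over $\ga$ are primes containing $\ga$.

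For (1)$\Rightarrow$(2), fix an ideal $\ga$ and choose primes $\gp_1,\dots,\gp_n$ containing $\ga$ with $\gp_1\cdots\gp_n\subseteq\ga$. Any $\gq\in\min(\ga)$ contains $\gp_1\cdots\gp_n$; since $\gq$ is prime, $\gq\supseteq\gp_i$ for some $i$, and minimality forces $\gq=\gp_i$. Hence $\min(\ga)\subseteq\{\gp_1,\dots,\gp_n\}$, in particular $|\min(\ga)|<\infty$. For each $\gp_i$, pick $\gq_i\in\min(\ga)$ with $\gq_i\subseteq\gp_i$; then $\gq_1\cdots\gq_n\subseteq\gp_1\cdots\gp_n\subseteq\ga$, a product of minimal primes over $\ga$.

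For (2)$\Rightarrow$(3), set $\min(\ga)=\{\gp_1,\dots,\gp_n\}$. Since $\gn_{R/\ga}$ equals the intersection of all primes of $R/\ga$, and every prime of $R$ containing $\ga$ sits above some $\gp_i$, we have $\gn_{R/\ga}=\bigcap_{i=1}^n\gp_i/\ga$. For any $x_1,\dots,x_n\in\gn_{R/\ga}$ the element $x_j$ lies in $\gp_j/\ga$; therefore $x_1\cdots x_n\in(\gp_1/\ga)\cdots(\gp_n/\ga)$, which gives $\gn_{R/\ga}^{\,n}\subseteq(\gp_1/\ga)\cdots(\gp_n/\ga)=0$ by (2). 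Conversely, for (3)$\Rightarrow$(2), with $\min(\ga)=\{\gp_1,\dots,\gp_n\}$ and $\gn_{R/\ga}^{\,k}=0$, the containment $(\gp_1/\ga)\cdots(\gp_n/\ga)\subseteq\bigcap_i\gp_i/\ga=\gn_{R/\ga}$ (which holds because a product ideal lies in every factor) yields $\bigl((\gp_1/\ga)\cdots(\gp_n/\ga)\bigr)^{k}\subseteq\gn_{R/\ga}^{\,k}=0$; thus $(\gp_1\cdots\gp_n)^{k}\subseteq\ga$, which is a finite product of minimal primes over $\ga$.

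\textbf{Main obstacle.} The subtle point is the noncommutative bookkeeping in (2)$\Rightarrow$(3): one must exploit that each factor of an $n$-fold product of elements from $\gn_{R/\ga}$ can be matched with a \emph{different} $\gp_j/\ga$, using $\gn_{R/\ga}\subseteq\bigcap_j\gp_j/\ga$. The ``reverse'' inclusion $\prod_i(\gp_i/\ga)\subseteq\bigcap_i(\gp_i/\ga)$ used in (3)$\Rightarrow$(2) is the only nontrivial set-theoretic ingredient, and it works in any ring since a product of ideals is contained in each factor. Everything else reduces to the standard fact that a prime containing a finite product of ideals contains one of the factors.
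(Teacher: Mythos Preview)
Your proof is correct and uses the same ingredients as the paper's (a prime containing a product of ideals contains a factor; a product of ideals sits inside their intersection). The paper organizes the cycle as $(1)\Rightarrow(3)\Rightarrow(2)\Rightarrow(1)$, whereas you prove $(1)\Leftrightarrow(2)$ and $(2)\Leftrightarrow(3)$ separately, but this is only a cosmetic difference.

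One small imprecision worth tightening in $(2)\Rightarrow(3)$: condition (2) guarantees that \emph{some} finite product $\gq_{i_1}\cdots\gq_{i_m}$ of minimal primes over $\ga$ (with repetitions allowed) lies in $\ga$, not that the single product $\gp_1\cdots\gp_n$ of the \emph{distinct} minimal primes does. Your element-wise argument works verbatim with the product actually supplied by (2), giving $\gn_{R/\ga}^{\,m}=0$; and the finiteness of $\min(\ga)$ (which you assume when you write $\min(\ga)=\{\gp_1,\dots,\gp_n\}$) follows by the same containment argument you already gave in $(1)\Rightarrow(2)$.
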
 
  Every one-sided Noetherian ring $R$ is prime rich.
  A left denominator set $S$ of a ring $R$ is called {\bf an ideal preserving left denominator set} (or a left denominator set that  {\bf respects the ideal structure} of $R$)  if the map 
  $$S^{-1}: \CI (R)\ra \CI(S^{-1}R), \;\; I\mapsto S^{-1}I$$ is a well-defined map, i.e.  for each  ideal $I$ of $R$, the left ideal $S^{-1}I$ of  of the ring $S^{-1}R$ is an ideal (where $\CI (R)$ is the set of all ideals of $R$).  We say that a left denominator set $S$ of a ring $R$ {\bf respects the ideal structure of primes} of $R$ if for each prime ideal $\gp\in \Spec (R)$ the left ideal  $S^{-1}\gp$ of the ring $S^{-1}R$ is an ideal. All the left denominator sets of a left Noetherian ring are ideal preserving and as a result also respect the ideal structure of primes. 

For a prime rich ring $R$ and its left denominator set $S\in \Den_l(R,\ga)$ that respects the  ideal structure of primes,  Proposition \ref{29Sep23} describes the set of minimal primes of the ring $S^{-1}R$ via the minimal primes of the ring $R$. 
 
\begin{proposition}\label{29Sep23}
Suppose that  $R$ is  a prime rich ring, a left denominator set  $S\in \Den (R, \ga)$ respects the ideal structure of primes and $S^{-1}\gp\in \Spec (S^{-1}R)$ for all $\gp\in \min (R, S)$. 
Then 
\begin{enumerate}

\item  $1\leq |\min (R,S)|\leq |\min (R)|<\infty$,  the set 
$$\min (S^{-1}R)=\min \{ S^{-1}\gp \, | \, \gp \in \min (R,S)\}$$
is a finite set and $|\min (S^{-1}R)|\leq  |\min (R,S)|<\infty$. 

\item $\min (S^{-1}R)= \{ S^{-1}\gp \, | \, \gp \in \min (R,S)\}$ iff the ideals $\{ S^{-1}\gp \, | \, \gp \in \min (R,S)\}$ are incomparable.

 \item If in addition, for each prime ideal $\gp\in \min (R, S)$, the ideal $\s_S^{-1}(S^{-1}\gp)$  is a finitely generated right $R$-module (e.g. $R$ is a right Noetheiran ring) then  $$\min (S^{-1}R)= \{ S^{-1}\gp \, | \, \gp \in \min (R,S)\}$$ where $\s_S: R\ra S^{-1}R$, $r\mapsto \frac{r}{1}$.

\end{enumerate}
\end{proposition}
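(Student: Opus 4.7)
The plan is as follows. For (1), prime-richness of $R$ gives $|\min (R)|<\infty$ by Proposition~\ref{aA29Sep23} applied to $\ga=0$, and $\min (R,S)\subseteq \min (R)$ supplies $|\min (R,S)|\leq |\min (R)|$. To guarantee $\min (R,S)\neq \emptyset$ (assuming $\ga\neq R$, so $S^{-1}R\neq 0$), I would argue by contradiction: if every $\gp_i\in \min (R)=\{\gp_1,\ldots ,\gp_n\}$ meets $S$, choose $s_i\in \gp_i\cap S$; then $s_1\cdots s_n\in \gp_1\cdots \gp_n\subseteq \gn_R$, and $\gn_R$ is nilpotent by Proposition~\ref{aA29Sep23}(3), so a power of $s_1\cdots s_n$ equals $0\in S$, forcing $S^{-1}R=0$ and $\ga=R$, a contradiction.

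For the description of minimal primes in (1), a minimal prime $\gq$ of $S^{-1}R$ contracts to a prime $\s_S^{-1}(\gq)$ of $R$ disjoint from $S$, which contains some $\gp\in \min (R)$; then $\gp\in \min (R,S)$ and $\s_S(\gp)\subseteq \gq$, giving $S^{-1}\gp\subseteq \gq$. Since $S^{-1}\gp$ is prime by hypothesis and $\gq$ is minimal, $\gq=S^{-1}\gp$. Hence $\min (S^{-1}R)\subseteq \{S^{-1}\gp\,|\, \gp \in \min (R,S)\}$, and extracting minimal elements of this finite collection of primes yields $\min (S^{-1}R)=\min \{S^{-1}\gp\,|\, \gp \in \min (R,S)\}$. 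Statement (2) is then purely formal: this collection coincides with its set of minimal elements exactly when no proper containment is present, i.e.\ the $S^{-1}\gp$ form an antichain.

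The crux of (3) is the contraction identity
\[
\s_S^{-1}(S^{-1}\gp)=\gp \qquad \text{for every }\gp\in \min (R,S),
\]
from which any containment $S^{-1}\gp_1\subseteq S^{-1}\gp_2$ forces $\gp_1\subseteq \gp_2$, hence $\gp_1=\gp_2$ by minimality, so the $S^{-1}\gp$ are pairwise incomparable and (3) follows from (2). To prove the identity, set $\gq:=\s_S^{-1}(S^{-1}\gp)$, which is a prime of $R$ with $\gp\subseteq \gq$, $\gq\cap S=\emptyset$, and $S^{-1}\gq=S^{-1}\gp$. Applying $S^{-1}(-)$ to the short exact sequence $0\to \gp\to \gq\to \gq/\gp\to 0$ gives $S^{-1}(\gq/\gp)=0$, so every element of $\gq/\gp$ is left-annihilated by some element of $S$. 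Taking a finite right-$R$-generating set $q_1,\ldots ,q_m$ of $\gq$ and using the left Ore condition to combine the individual annihilators, I obtain a common $s\in S$ with $sq_i\in \gp$ for all $i$, whence $s\gq\subseteq \gp$ (using that $\gp$ is a right ideal). Since $\gq$ is two-sided, $R\gq\subseteq \gq$, so for any $r\in R$ and $b\in \gq$, $srb=s(rb)\in s\gq\subseteq \gp$; i.e.\ $sRb\subseteq \gp$. Primeness of $\gp$ together with $s\notin \gp$ (as $s\in S$ and $\gp\cap S=\emptyset$) then forces $b\in \gp$ for every $b\in \gq$, giving $\gq\subseteq \gp$ and hence $\gq=\gp$.

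The main obstacle is this contraction identity $\s_S^{-1}(S^{-1}\gp)=\gp$: in the non-commutative setting the commutative-style implication ``$ur\in \gp$ and $u\notin \gp \Rightarrow r\in \gp$'' fails in general (primeness supplies only $uRr\subseteq \gp \Rightarrow u\in \gp$ or $r\in \gp$), and the finite-generation hypothesis is precisely what permits replacing the elementwise annihilator statement for $\gq/\gp$ by the uniform containment $sR\gq\subseteq \gp$, at which point primeness of $\gp$ can finally be applied.
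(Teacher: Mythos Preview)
Your argument for part (1) has a genuine gap. You claim that for a minimal prime $\gq$ of $S^{-1}R$, the contraction $\s_S^{-1}(\gq)$ is a prime ideal of $R$, and then use this to find a minimal prime $\gp\in\min(R)$ contained in it. But in the noncommutative setting the contraction of a prime under $\s_S$ need \emph{not} be prime: the paper itself cites \cite[Example~10M]{GW} for a prime ring $S^{-1}R$ whose contraction $\s_S^{-1}(0)=\ga$ is not even semiprime. Without primeness of $\s_S^{-1}(\gq)$ you have no reason to expect it to contain any minimal prime of $R$. Notice also that your argument never invokes the hypothesis that $S$ respects the ideal structure of \emph{all} primes (not just the minimal ones), which is a signal that something is missing.

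The paper's route around this obstacle is exactly where prime-richness is used a second time (beyond the finiteness of $\min(R)$): the ideal $\s_S^{-1}(\gq)$ contains a finite product $\gp_1\cdots\gp_n$ of primes $\gp_i\supseteq \s_S^{-1}(\gq)$; since $S$ respects the ideal structure of primes, each $S^{-1}\gp_i$ is a genuine two-sided ideal, so $S^{-1}\gp_1\cdots S^{-1}\gp_n\subseteq \gq$ and primeness of $\gq$ yields $S^{-1}\gp_i\subseteq\gq$ for some $i$; then pass to a minimal prime $\gp\subseteq\gp_i$. Your argument for $\min(R,S)\neq\emptyset$ via nilpotence of $\gn_R$ is a valid alternative to the paper's (which instead deduces it a posteriori from $\min(S^{-1}R)\neq\emptyset$). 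Your treatment of (3) is essentially the paper's proof of Proposition~\ref{Aa6Oct23}; note that you also assert there that $\gq=\s_S^{-1}(S^{-1}\gp)$ is prime, but fortunately you never actually use that claim---only that $\gq$ is a two-sided ideal and that $\gp$ is prime---so (3) goes through.
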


\begin{corollary}\label{a29Sep23}
Suppose that  $R$ is  a  right Noetherian ring,   $S\in \Den (R, 0)$ and  $S^{-1}R$ is a left Noetherian ring (e.g. $R$ is a Noetherian ring). 
Then $$\min (S^{-1}R)= \{ S^{-1}\gp \, | \, \gp \in \min (R,S)\}.$$
In particular, $\min (R,S)\neq \emptyset$. 
\end{corollary}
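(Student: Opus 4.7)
\emph{Proof plan.} The plan is to verify the hypotheses of Proposition~\ref{29Sep23}(3) and then invoke it.

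First, I would check that $R$ is prime rich. Since $R$ is right Noetherian, $R$ has only finitely many minimal primes and its prime radical $\gn_R$ is nilpotent; the same holds for every factor ring $R/\ga$. Proposition~\ref{aA29Sep23}(3) then gives that $R$ is prime rich.

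Second, I would verify that $S$ respects the ideal structure of primes and that $S^{-1}\gp\in\Spec(S^{-1}R)$ for each $\gp\in\min(R,S)$. The non-Noetherian observation is that every left ideal $L$ of $S^{-1}R$ is recovered from its contraction as $L=S^{-1}(\sigma_S^{-1}(L))$; in particular, each two-sided ideal of $S^{-1}R$ is the extension of a two-sided ideal of $R$. Because $S^{-1}R$ is left Noetherian, standard noncommutative localization theory produces an order-preserving bijection
$$\{\gp\in\Spec(R)\,:\,\gp\cap S=\emptyset\}\;\longleftrightarrow\;\Spec(S^{-1}R),\qquad \gp\mapsto S^{-1}\gp,\quad \gq\mapsto\sigma_S^{-1}(\gq),$$
under which $S^{-1}\gp$ is a prime (in particular, two-sided) ideal of $S^{-1}R$ for every prime $\gp$ of $R$ disjoint from $S$. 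For $\gp$ with $\gp\cap S\neq\emptyset$, one has $S^{-1}\gp=S^{-1}R$, trivially an ideal. Hence $S$ respects the ideal structure of primes, and $S^{-1}\gp\in\Spec(S^{-1}R)$ for every $\gp\in\min(R,S)$.

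Third, for each $\gp\in\min(R,S)$ the contraction $\sigma_S^{-1}(S^{-1}\gp)$ is a two-sided, hence right, ideal of $R$; since $R$ is right Noetherian, it is finitely generated as a right $R$-module.

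Proposition~\ref{29Sep23}(3) now delivers the asserted equality $\min(S^{-1}R)=\{S^{-1}\gp\,:\,\gp\in\min(R,S)\}$. For the \emph{in particular} assertion, $\ass_l(S)=0$ forces $0\notin S$, so $S^{-1}R\neq 0$; being left Noetherian, $S^{-1}R$ has at least one minimal prime, and the equality just shown then forces $\min(R,S)\neq\emptyset$. The main obstacle is the second step: showing that the extension $S^{-1}\gp$ of each prime $\gp$ of $R$ disjoint from $S$ is a two-sided (prime) ideal of $S^{-1}R$. In the noncommutative setting this can fail, and the left Noetherian hypothesis on $S^{-1}R$ is precisely what rescues the classical prime-correspondence; the first and third steps are immediate from the right Noetherian property of $R$.
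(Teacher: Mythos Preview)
Your proposal is correct and follows essentially the same route as the paper: verify the hypotheses of Proposition~\ref{29Sep23}(3) and apply it. The only difference is in how you justify step~2: where you appeal to ``standard noncommutative localization theory'' for the prime bijection, the paper instead uses its own Proposition~\ref{B29Sep23} (left Noetherianity of $S^{-1}R$ forces $S$ to respect the ideal structure of \emph{all} ideals of $R$), then Proposition~\ref{Aa6Oct23} (right Noetherianity of $R$ gives $\sigma_S^{-1}(S^{-1}\gp)=\gp$), and finally Corollary~\ref{a6Oct23}(1) to conclude $S^{-1}\gp\in\Spec(S^{-1}R)$. Be aware that the textbook prime correspondence (e.g.\ Goodearl--Warfield) is usually stated for $R$ itself left Noetherian rather than $S^{-1}R$ left Noetherian, so your citation is slightly loose under the precise hypotheses here; the paper's internal chain via Proposition~\ref{B29Sep23} is exactly what makes the weaker assumption suffice.
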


A proof of Proposition \ref{29Sep23} and Corollary \ref{a29Sep23} are given in Section \ref{MINPRLOCSPRIMES}. \\

  {\bf Description of the set of minimal primes of a localization of a semiprime ring at a regular denominator set.}  The aim of  Section \ref{DESCRMINPR} is to prove Theorem \ref{A10Sep23}.(2) that provides  an explicit description of  the set  of minimal primes of localizations of  semiprime  rings provided that they  have only finitely many minimal primes (statement 1 is a known result). Theorem \ref{A10Sep23}.(2) is one of the  key facts in  understanding   the structure of  embeddings of localizations of semiprime  rings  into  semisimple or  semiprimary rings, see \cite{Embed-SPrime-SSA}.

\begin{theorem}\label{A10Sep23}
Let $R$ be a semiprime ring and $S\in \Den_l(R, 0)$. Then 
\begin{enumerate}

\item The ring $S^{-1}R$ is a semiprime ring.

\item If, in  addition,  $|\min (R)|<\infty$ then $\min (S^{-1}R)=\{ S^{-1}\gp\, | \, \gp \in \min (R)\}$, i.e. the map $\min (R)\ra \min (S^{-1}R)$, $\gp \mapsto S^{-1}\gp$ is a bijection and $|\min (S^{-1}R)|=|\min (R)|$.

\item If, in  addition,  $|\min (R)|<\infty$ then
$\pi_\gp (S)\in \Den_l(R/\gp , 0)$ and  $$\pi_\gp (S)^{-1}(R/\gp)\simeq S^{-1}R/S^{-1}\gp \simeq S^{-1}(R/\gp)$$ where $\pi_\gp : R\ra R/\gp$, $r\mapsto r+\gp$.
\end{enumerate}
\end{theorem}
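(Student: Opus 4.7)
The plan for (1) is the standard semiprime-lifts argument. Since $\ass_l(S)=0$ makes $\sigma_S:R\to S^{-1}R$ injective, any ideal $\tilde I\subseteq S^{-1}R$ with $\tilde I^n=0$ has preimage $I:=\sigma_S^{-1}(\tilde I)$ satisfying $\sigma_S(I^n)\subseteq\tilde I^n=0$, so $I^n=0$; semiprimeness of $R$ gives $I=0$, and then $r/s\in\tilde I\Rightarrow r/1=(s/1)(r/s)\in\tilde I\Rightarrow r\in I=0$ kills $\tilde I$.

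For (2) and (3), the entire proof will rest on a single key lemma that I would establish first. Writing $\min(R)=\{\gp_1,\ldots,\gp_n\}$ and $\gp_{\hat i}:=\bigcap_{j\neq i}\gp_j$, semiprimeness gives $\gp_i\cap\gp_{\hat i}=\bigcap_j\gp_j=0$, hence $\gp_i\gp_{\hat i}=\gp_{\hat i}\gp_i=0$; a short primeness argument (if $\gp_{\hat i}a=0$, then $\gp_{\hat i}\cdot RaR\subseteq\gp_{\hat i}aR=0\subseteq\gp_i$, and since $\gp_{\hat i}\not\subseteq\gp_i$ by minimality of primes, primeness of $\gp_i$ forces $RaR\subseteq\gp_i$, so $a\in\gp_i$) identifies $\gp_i=\{a\in R:\gp_{\hat i}a=0\}=\{a\in R:a\gp_{\hat i}=0\}$. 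From this I extract two crucial implications: for $s\in S$, $sr\in\gp_i\Rightarrow r\in\gp_i$ (because $s(r\gp_{\hat i})\subseteq\gp_i\gp_{\hat i}=0$ and $\ass_l(S)=0$ force $r\gp_{\hat i}=0$, i.e., $r\in\gp_i$), and $rs\in\gp_i\Rightarrow r\in\gp_i$ (from $(\gp_{\hat i}r)s=0$ one obtains $\gp_{\hat i}r=0$ via left reversibility of $S$ combined with $\ass_l(S)=0$). Taking $r=1$ in either implication yields $S\cap\gp_i=\emptyset$ as a corollary.

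With the lemma in place (3) is nearly automatic. The left Ore condition in $R$ projects to $\pi_{\gp_i}(S)\subseteq R/\gp_i$, and the two implications above verify $\ass_l(\pi_{\gp_i}(S))=0$ and left reversibility, so $\pi_{\gp_i}(S)\in\Den_l(R/\gp_i,0)$. Applying the universal property of $S^{-1}R$ to the composite $R\to R/\gp_i\to\pi_{\gp_i}(S)^{-1}(R/\gp_i)$ produces a surjection whose kernel, computed using the lemma, is exactly $S^{-1}\gp_i=\{q/s:q\in\gp_i,\,s\in S\}$; this simultaneously shows $S^{-1}\gp_i$ is two-sided and yields $\pi_{\gp_i}(S)^{-1}(R/\gp_i)\simeq S^{-1}R/S^{-1}\gp_i\simeq S^{-1}(R/\gp_i)$. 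For (2), the quotient is a left Ore localization of the prime ring $R/\gp_i$ at a denominator set with trivial $\ass_l$, hence is itself prime, so $S^{-1}\gp_i\in\Spec(S^{-1}R)$. The lemma also yields $\sigma_S^{-1}(S^{-1}\gp_i)=\gp_i$, which forces $\bigcap_i S^{-1}\gp_i=0$ and pairwise incomparability (a chain $S^{-1}\gp_i\subseteq S^{-1}\gp_j$ contracts to $\gp_i\subseteq\gp_j$, so $i=j$); combined with (1) this identifies $\min(S^{-1}R)=\{S^{-1}\gp_1,\ldots,S^{-1}\gp_n\}$. The main obstacle is the key lemma itself: $\gp_i$ is only prime, not completely prime, so $sr\in\gp_i$ does not directly give $r\in\gp_i$, and the trick is to play the complementary minimal primes $\gp_{\hat i}$ against $\ass_l(S)=0$ and left reversibility of $S$.
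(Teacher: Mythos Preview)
Your proposal is correct and follows essentially the same approach as the paper: both hinge on the complementary intersection $\gp_{\hat i}=\bigcap_{j\neq i}\gp_j$ (the paper's $\gp^c$), establish the annihilator identification $\gp_i=\ann_R(\gp_{\hat i})$, and use it together with $\ass_l(S)=0$ and left reversibility to prove the key implications $sr\in\gp_i\Rightarrow r\in\gp_i$ and $rs\in\gp_i\Rightarrow r\in\gp_i$. The only cosmetic differences are that the paper deduces two-sidedness of $S^{-1}\gp_i$ from a separately stated torsion criterion (Proposition~\ref{A11Sep23}) rather than as the kernel of the universal map, and concludes via an ``irredundant set of primes'' characterization (Lemma~\ref{b10Sep23}) rather than via contraction and incomparability.
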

 In general, for a ring $R$, its {\em classical left quotient ring} of $R$,  $$Q_{l,cl}(R):=\CC_R^{-1}R, $$ does not exists (e.g. the algebras of polynomial  integro-differential operators,  \cite{Bav-intdifline})  but for each ring $R$ there is the largest left Ore set  $S_l(R)$ of regular elements of $R$ and the ring $$Q_l(R):=S_l(R)^{-1}R$$ is called the {\em  largest left quotient ring} of $R$, \cite{Bav-intdifline, larglquot}. 
 As a corollary, for a semiprime ring $R$ with finitely many minimal primes, Corollary \ref{c10Sep23} describes the set of minimal primes of the largest left quotient ring $Q_l(R)$ of $R$. 
 
 Corollary \ref{aA10Sep23} is a generalization of Theorem \ref{A10Sep23} to the case when the denominator set $S$ contains zero divisors of $R$. To be more precisely, $S\in \Den_l(R,\ga)$ where  $\ga\in \Spec (R)$.
 
  Theorem \ref{28Sep23} is a generalization of Theorem \ref{A10Sep23}.(2,3) for left denominator sets that contain zero divisors.   
  
   Proposition \ref{b28Sep23}, which is a corollary of Theorem \ref{28Sep23},  describes the set of  minimal primes, $\min (S^{-1}R)$,  of a localization of a semiprime ring $R$ under  conditions that hold for all commutative semiprime rings (namely, for all  $\gp \in \min (R, S)$, $\gp$ is a completely prime ideal of $R$ and $S^{-1}\gp$ is an ideal of $S^{-1}R$).
  
\begin{proposition}\label{b28Sep23}
Let $R$ be a semiprime  ring with $|\min (R)|<\infty$ and $S\in \Den (R, \ga)$. Suppose that for all $\gp \in \min (R, S)$, $\gp$ is a completely prime ideal of $R$ and $S^{-1}\gp$ is an ideal of $S^{-1}R$. Then 
\begin{enumerate}

\item  The ring $S^{-1}R$ is a semiprime ring with $\min (S^{-1}R)=\{ S^{-1}\gp\, | \, \gp \in \min (R,S)\}\subseteq \Spec_c(S^{-1}R)$.

\item  In statement 1,  all ideals in the set $\min (S^{-1}R)$ are distinct, i.e. $|\min (S^{-1}R)|=|\min (R,S)|$.  

\end{enumerate}
\end{proposition}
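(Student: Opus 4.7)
The plan is to treat this as a corollary of Theorem \ref{28Sep23}. That theorem, being the direct generalization of Theorem \ref{A10Sep23}(2,3) to the non-regular denominator case, should provide both the semiprimeness of $S^{-1}R$ and a description of $\min(S^{-1}R)$ as a subset of $\{S^{-1}\gp \mid \gp \in \min(R,S)\}$. What remains is to use the complete-primeness hypothesis to upgrade the conclusion in two ways: each $S^{-1}\gp$ is completely prime, and distinct $\gp\in \min(R,S)$ give distinct $S^{-1}\gp$.

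First I would verify that $\ass_l(S) \subseteq \gp$ for every $\gp \in \min(R,S)$. Indeed, for $r \in \ass_l(S)$ there is $s \in S$ with $sr = 0$; passing to the completely prime quotient $\bR := R/\gp$, one obtains $\bar{s}\bar{r} = 0$ with $\bar{s} \neq 0$ (since $S \cap \gp = \emptyset$), whence $r \in \gp$. This containment allows a routine verification that $\bS := \pi_\gp(S) \in \Den_l(\bR, 0)$. Together with the hypothesis that $S^{-1}\gp$ is an ideal of $S^{-1}R$, this produces an isomorphism
$$S^{-1}R / S^{-1}\gp \;\simeq\; \bS^{-1}\bR,$$
exactly as in Theorem \ref{A10Sep23}(3). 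The right-hand side is a localization of a domain at a left Ore set of regular (nonzero) elements, hence is itself a nonzero domain. Therefore $S^{-1}\gp \in \Spec_c(S^{-1}R)$.

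Next I would establish distinctness via the contraction map $\s_S: R \to S^{-1}R$, $r \mapsto \frac{r}{1}$. The key intermediate claim is that $\s_S^{-1}(S^{-1}\gp) = \gp$ for every $\gp \in \min(R,S)$. The inclusion $\gp \subseteq \s_S^{-1}(S^{-1}\gp)$ is clear. Conversely, if $\frac{r}{1} \in S^{-1}\gp$ then its image in the domain $\bS^{-1}\bR \simeq S^{-1}R/S^{-1}\gp$ vanishes, so $\bar{s}\bar{r} = 0$ in $\bR$ for some $\bar{s} \in \bS$; that is, $sr \in \gp$ for some $s \in S \setminus \gp$, and the complete-primeness of $\gp$ forces $r \in \gp$. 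Consequently, distinct $\gp, \gq \in \min(R,S)$ satisfy $\s_S^{-1}(S^{-1}\gp) = \gp \neq \gq = \s_S^{-1}(S^{-1}\gq)$, ruling out $S^{-1}\gp = S^{-1}\gq$ and proving statement 2; combined with Theorem \ref{28Sep23}, this also upgrades the inclusion $\min(S^{-1}R) \subseteq \{S^{-1}\gp \mid \gp \in \min(R,S)\}$ to an equality.

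The main obstacle is the containment $\ass_l(S) \subseteq \gp$, which is exactly what enables the passage to the left denominator set $\bS$ on $\bR$ and makes the displayed isomorphism available; once this is in hand, both the complete-primeness and the distinctness become essentially formal consequences of the fact that a localization of a domain at a left Ore set of nonzero elements is a (nonzero) domain.
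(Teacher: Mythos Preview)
Your proposal is correct and follows essentially the same route as the paper. Both arguments verify condition 2 of Theorem \ref{28Sep23} by exhibiting $S^{-1}R/S^{-1}\gp$ as a localization of the domain $R/\gp$, and both deduce distinctness from complete primeness via the implication ``$sr\in\gp$ with $s\in S\setminus\gp$ forces $r\in\gp$.'' The only cosmetic difference is that for statement 2 the paper argues directly that $S^{-1}\gp=S^{-1}\gq$ implies $\gp\subseteq\gq$, whereas you package the same computation as the contraction identity $\s_S^{-1}(S^{-1}\gp)=\gp$; and note that Theorem \ref{28Sep23} already yields the set equality $\min(S^{-1}R)=\{S^{-1}\gp\mid\gp\in\min(R,S)\}$ (with possible repetitions), so your distinctness argument is needed only for the cardinality claim, not to ``upgrade an inclusion.''
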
 
Proofs of Theorem \ref{A10Sep23} and Proposition \ref{b28Sep23} are given in Section \ref{DESCRMINPR}.\\

{\bf Description of minimal primes of localizations of rings at multiplicative sets generated by normal elements.}  
Inversion of elements in a ring  is an  important and difficult operation that  `simplifies'  the situation as a rule.   Ore's method of localization is an example of a theory of {\em one-sided fractions} where by definition only the elements of a {\em denominator set} can be inverted and the result is a ring of one-sided fractions which always exsists, i.e. it is not equal to zero, \cite{Stenstrom-RingQuot,Jategaonkar-LocNRings,MR}. In \cite[Theorem 4.15]{Bav-intdifline}, it is proven that the elements of an {\em arbitrary} (left and right) {\em Ore set} can be inverted and the result  is also a ring of one-sided fractions (which always exists)  but in general Ore set is not a denominator set. This was the starting point in \cite{LocSets} and \cite{Loc-groupUnits} where the  most general theory of one-sided fractions was presented.

\begin{theorem}\label{10Jan19-st1,2}
(\cite[Theorem 1.6.(1,2)]{LocSets})  Let $R$ be a ring and $S\in \Ore (R)$.
\begin{enumerate}
\item $\ga :=\{ r\in R\, | \, srt=0$ for some elements $s,t\in S\}$ is an ideal of $R$ such that $\ga \neq R$. 
\item  Let $\pi : R\ra \bR :=R/\ga$, $r\mapsto \br =r+\ga$. Then $\bS :=
\pi (S) \in \Den (\bR , 0)$, $\ass_R(S)=\ga $, $S$ is a  localizable set,  and $\RSm \simeq \bS^{-1}\bR$, an $R$-isomorphism  (the ring $\RSm$ is defined in (\ref{IRbSbm})).
\end{enumerate}
\end{theorem}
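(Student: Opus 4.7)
The plan is to first show $\ga$ is a proper two-sided ideal of $R$ by merging pairwise Ore witnesses, then to pass to the quotient $\bR=R/\ga$ where $S$ projects to a regular denominator set, and finally to identify $\RSm$ with $\bS^{-1}\bR$ via a universal-property argument.

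For Part (1), the substantive point is closure of $\ga$ under addition and under two-sided multiplication. Given $r_1,r_2\in\ga$ with witnesses $s_ir_it_i=0$ ($s_i,t_i\in S$, $i=1,2$), I would apply the left Ore condition to $s_1,s_2$ to produce $r'\in R,s'\in S$ with $r's_1=s's_2$; since $s',s_2\in S$, the common left multiple $s:=r's_1=s's_2$ lies in $S$. Dually, the right Ore condition applied to $t_1,t_2$ gives $v\in S,v'\in R$ with $t_1v=t_2v'$, and the common right multiple $t:=t_1v=t_2v'$ lies in $S$. Then $sr_1t=r'(s_1r_1t_1)v=0$ and $sr_2t=s'(s_2r_2t_2)v'=0$, so $s(r_1\pm r_2)t=0$ and $r_1\pm r_2\in\ga$. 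For $a\in R$ and $r\in\ga$ with $srt=0$, closure of $\ga$ under left multiplication by $a$ uses left Ore on $(a,s)$, producing $\tilde s\in S,\tilde a\in R$ with $\tilde sa=\tilde as$; then $\tilde s(ar)t=\tilde a(srt)=0$. Closure under right multiplication is symmetric, via right Ore on $(a,t)$. Finally, $1\in\ga$ would force $st=0$ for some $s,t\in S$, contradicting $st\in S$ and $0\notin S$ (built into the definition of an Ore set); hence $\ga\neq R$.

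For Part (2), the quotient $\bR=R/\ga$ is tailored precisely so that $\bS=\pi(S)$ consists of regular elements of $\bR$: if $\bar s\bar r=0$ in $\bR$ for $s\in S$, then $sr\in\ga$, i.e.\ $u(sr)v=0$ for some $u,v\in S$; but $us\in S$, so the defining condition on $\ga$ yields $r\in\ga$, hence $\bar r=0$. The mirror argument shows $\bS$ is also right-regular. The Ore conditions for $\bS$ in $\bR$ are the $\pi$-images of those for $S$ in $R$, so $\bS\in\Den(\bR,0)$ and the classical two-sided localization $\bS^{-1}\bR$ exists. The composite $R\xrightarrow{\pi}\bR\hookrightarrow\bS^{-1}\bR$ inverts every $s\in S$ and has kernel exactly $\ga$ (the embedding is injective by regularity of $\bS$), yielding $\ass_R(S)=\ga$. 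For the universal property, any ring homomorphism $\psi:R\to T$ that inverts $S$ must annihilate $\ga$ (since $srt=0$ together with $\psi(s),\psi(t)\in T^\times$ forces $\psi(r)=0$), and so factors through $\bR$ and then uniquely through $\bS^{-1}\bR$ via the universal property of the denominator-set localization at $\bS$; this identifies $\bS^{-1}\bR$ with the one-sided localization $\RSm$ and in particular shows $S$ is localizable.

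The main technical hurdle is Part (1): the simultaneous management of left and right Ore witnesses, and the repeated verification that each constructed common multiple actually falls inside $S$ rather than escaping into $R$. It is exactly this feature that makes $\ga$ a two-sided ideal rather than a merely one-sided one, and it would fail if only a one-sided Ore condition were available. Once $\ga$ is in hand, the quotient-and-invert strategy of Part (2) is essentially formal, and the universal-property comparison is the standard bridge between the one-sided fraction ring $\RSm$ and the classical denominator-set localization $\bS^{-1}\bR$.
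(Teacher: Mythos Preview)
The paper does not contain a proof of this theorem: it is quoted verbatim as \cite[Theorem 1.6.(1,2)]{LocSets} and used as an input, so there is no in-paper argument to compare your proposal against. Your proof is correct and is the natural direct argument; the only points worth noting are that in Part~(1) the essential observation (which you made) is that the common multiples $s=r's_1=s's_2$ and $t=t_1v=t_2v'$ genuinely lie in $S$ because one of the two factorizations exhibits each as a product of two elements of $S$, and in Part~(2) the identification $\RSm\simeq\bS^{-1}\bR$ rests on the fact that $\RSm=R\langle X_S\rangle/I_S$ is \emph{by construction} the universal $S$-inverting ring, so once you check that $\bS^{-1}\bR$ enjoys the same universal property the isomorphism is forced.
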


 Let $R$ be a ring and $S$ be a multiplicative subset of $R$ which is generated by normal elements of $R$. Recall that an element $x\in R$ is called a {\em normal element} of $R$ if $Rx=xR$.  Clearly,  $S\in \Ore (R)$ is an  Ore set of $R$. By Theorem \ref{10Jan19-st1,2}, $S$ is a localizable set of $R$ such that  $\ass_R(S)=\ga $ and $\RSm \simeq \bS^{-1}\bR$, see Section  \ref{MINPTNORM} for details.   We keep the notation of Theorem \ref{10Jan19-st1,2}. Let  $$\widetilde{\pi}: \bR\ra \widetilde{R}:=\bR/  \gn_{\bR}, \;\;\br\mapsto \widetilde{r}=\widetilde{\br}=\br +\gn_{\bR}$$ where $\gn_{\bR}$ is the prime radical of the ring $\bR$. There are bijections
$$\min (\ga)\stackrel{\pi}{\ra} \min (\bR)\stackrel{\widetilde{\pi}}{\ra}\min(\widetilde{R}), \;\;  \gp\mapsto \bgp \mapsto \widetilde{\overline{\gp}}.$$
Let $\min (\ga , S):=\{ \gp \in \min (\ga)\, | \, \gp\cap S =\emptyset\}$.

Theorem \ref{A2Oct23} describes the set of minimal primes of the ring $\RSm$. In general, the ring $\RSm$ is not a semiprime ring but precisely  the same result as Theorem \ref{A10Sep23} holds for it but without the `semiprimeness' assumption, see Theorem \ref{A2Oct23}.(4a). The key ideal of the proof of this result is to reduce it to the semiprime case by considering the semiprime ring
$\RSm/\gn_{\RSm}$.

\begin{theorem}\label{A2Oct23}
Let $R$ be a ring and $S$ be a multiplicative subset of $R$ which is generated by normal elements and such that $|\min (\ga)|<\infty$ where $\ga$ is defined in Theorem \ref{10Jan19-st1,2}.(1), e.g. $R$ is a one-sided Noetherian ring.  For each $\gp \in \min (\ga )$, let  $\pi_\gp : R\ra R/\gp$, $r\mapsto r+\gp$. Then 
\begin{enumerate}

\item The map 
$\min (\ga)\ra \Spec (\RSm)=\Spec (\bS^{-1}\bR)$, $\gp \mapsto \bS^{-1}\bgp $ is an injection and $\gn_{\bS^{-1}\bR}\subseteq \bS^{-1}\gn_{\bR}$. 

\item $\widetilde{\bS}\in \Den (\widetilde{R},0)$,   $\bS^{-1}\bR/\bS^{-1}\gn_{\bR}\simeq
\widetilde{\bS}^{-1}\widetilde{R}$, and  the map $\min (\ga)\ra \min (\tbS^{-1}\widetilde{R})$, $\gp\mapsto \tbS^{-1}\tbgp$ is a bijection.

\item  For each $\gp \in \min (\ga )$, $\pi_\gp (S)\in \Den(R/\gp, 0)$ and $\pi_\gp (S)^{-1}(R/\gp)\simeq \widetilde{\bS}^{-1}\widetilde{R}/\widetilde{\bS}^{-1}\widetilde{\bgp}\simeq \bS^{-1}\bR/\bS^{-1}\bgp$ and there is a  commutative diagram:

$$\begin{array}[c]{cccccccc} 
\pi_\gp:& R & \stackrel{\pi}{\ra} & \bR &  \stackrel{\widetilde{\pi}}{\ra}& \widetilde{R} & \stackrel{\widetilde{\pi}_\gp}{\ra} & R/\gp  \\
 & & &\downarrow & &\downarrow & &\downarrow   \\
& & &\bS^{-1}\bR &\ra  &\widetilde{\bS}^{-1}\widetilde{R} &\ra & \pi_\gp(S)^{-1}(R/\gp)  \\
 & & &\downarrow  &\nearrow_{\simeq} &\downarrow  &  {\nearrow}_{\simeq}&   \\
 &\bS^{-1}\bR/\gn_{\bS^{-1}\bR} &\ra &\bS^{-1}\bR/\bS^{-1}\gn_{\bR} &\ra &\widetilde{\bS}^{-1}\widetilde{R}/\widetilde{\bS}^{-1}\widetilde{\bgp} & &  \\
& & & \downarrow& {\nearrow}_{\simeq}& & &   \\ 
& & &\bS^{-1}\bR/\bS^{-1}\gp & & & &   \\ 
\end{array}$$

\item Suppose that the prime radical $\gn_{\bR}$ is a nilpotent ideal (e.g. the ring $\bR$ is a one-sided Noetherian). Then 

\begin{enumerate}

\item The map 
$\min (\ga)\ra \min (\RSm)=\min(\bS^{-1}\bR)$, $\gp \mapsto \bS^{-1}\bgp $ is a bijection, i.e.  $ \min (\RSm)=\{   \bS^{-1}\bgp\, | \, \gp \in \min (\ga)\}$. 

\item $\widetilde{\bS}\in \Den (\widetilde{R},0)$,  $\bS^{-1}\gn_{\bR}=\gn_{\bS^{-1}\bR}$ and  $\bS^{-1}\bR/\bS^{-1}\gn_{\bR}\simeq
\bS^{-1}\bR/\gn_{\bS^{-1}\bR}\simeq
\widetilde{\bS}^{-1}\widetilde{R}$.


\end{enumerate} 
\end{enumerate}
\end{theorem}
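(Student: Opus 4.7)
The plan is to reduce the statement to the semiprime case of Theorem \ref{A10Sep23} by passing through two successive factor rings: first $\bR = R/\ga$ (so that $\bS = \pi(S)$ becomes a denominator set with trivial associated ideal), and then $\widetilde{R} = \bR/\gn_{\bR}$ (a semiprime ring with the same finite set of minimal primes). Theorem \ref{10Jan19-st1,2} handles the first reduction: $\bS \in \Den(\bR, 0)$, $\RSm \simeq \bS^{-1}\bR$, and the projection $\pi$ yields a bijection $\min(\ga) \leftrightarrow \min(\bR)$ with $|\min(\bR)| < \infty$. Crucially, since $S$ (and hence $\bS$) is generated by normal elements, one has $s^{-1}\bR = \bR s^{-1}$ in $\bS^{-1}\bR$ for every $s \in \bS$, so the extension $\bS^{-1}I$ of \emph{any} left ideal of $\bR$ is automatically a two-sided ideal of $\bS^{-1}\bR$. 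This structural property powers the whole argument.

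For the second reduction, I would verify that $\widetilde{\bS} := \widetilde{\pi}(\bS) \in \Den(\widetilde{R}, 0)$. The Ore conditions descend trivially from $\bR$ to $\widetilde{R}$, and the associated ideal of $\widetilde{\bS}$ is zero because, writing $\bar{s}\bar{x} \in \gn_{\bR}$ with $\bar{s} \in \bS$, the finiteness of $\min(\bR)$ forces $\bar{x} \in \bgp$ for every $\bgp \in \min(\bR)$: in the prime ring $\bR/\bgp$ the image of $\bar{s}$ is a nonzero normal element, hence a non-zero-divisor. The induced ring surjection $\bS^{-1}\bR \twoheadrightarrow \widetilde{\bS}^{-1}\widetilde{R}$ then has kernel exactly $\bS^{-1}\gn_{\bR}$ (a two-sided ideal by the previous paragraph), giving the isomorphism $\bS^{-1}\bR/\bS^{-1}\gn_{\bR} \simeq \widetilde{\bS}^{-1}\widetilde{R}$. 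Applying Theorem \ref{A10Sep23} to the semiprime ring $\widetilde{R}$ with the denominator set $\widetilde{\bS}$ supplies the bijection $\min(\widetilde{R}) \to \min(\widetilde{\bS}^{-1}\widetilde{R})$, $\widetilde{\bgp} \mapsto \widetilde{\bS}^{-1}\widetilde{\bgp}$, together with $\widetilde{\bS}^{-1}\widetilde{R}/\widetilde{\bS}^{-1}\widetilde{\bgp} \simeq \pi_\gp(S)^{-1}(R/\gp)$. Assembling the three reductions $R \to \bR \to \widetilde{R} \to R/\gp$ produces the commutative diagram of (3) and proves (2) and (3) simultaneously.

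For the containment $\gn_{\bS^{-1}\bR} \subseteq \bS^{-1}\gn_{\bR}$ in statement (1), use $\gn_{\bR} = \bigcap_{\gp \in \min(\ga)} \bgp$ (a finite intersection because $|\min(\bR)|<\infty$): finite intersections of two-sided ideals commute with left localization, so $\bS^{-1}\gn_{\bR} = \bigcap_\gp \bS^{-1}\bgp$. Every factor on the right either is a proper prime of $\bS^{-1}\bR$ (and so contains $\gn_{\bS^{-1}\bR}$ by definition of the prime radical) or equals $\bS^{-1}\bR$ itself (and so trivially contains $\gn_{\bS^{-1}\bR}$), giving the desired inclusion. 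Injectivity of $\gp \mapsto \bS^{-1}\bgp$ is inherited from the already proved bijection for $\widetilde{\bS}^{-1}\widetilde{R}$: under the surjection $\bS^{-1}\bR \twoheadrightarrow \widetilde{\bS}^{-1}\widetilde{R}$ distinct $\bS^{-1}\bgp_1, \bS^{-1}\bgp_2$ descend to distinct $\widetilde{\bS}^{-1}\widetilde{\bgp}_1 \neq \widetilde{\bS}^{-1}\widetilde{\bgp}_2$.

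Statement (4) then follows quickly: if $\gn_{\bR}^m = 0$ then the two-sided ideal $\bS^{-1}\gn_{\bR}$ is also nilpotent in $\bS^{-1}\bR$ (normality of $\bS$ lets one move all denominators to the left and cancel), so $\bS^{-1}\gn_{\bR} \subseteq \gn_{\bS^{-1}\bR}$; combined with (1) this gives $\bS^{-1}\gn_{\bR} = \gn_{\bS^{-1}\bR}$, and the bijection for $\widetilde{\bS}^{-1}\widetilde{R}$ transports directly to $\min(\bS^{-1}\bR)$. The main obstacle is Step 2: proving that $\widetilde{\bS}$ has zero associated ideal in $\widetilde{R}$ and identifying the kernel of $\bS^{-1}\bR \to \widetilde{\bS}^{-1}\widetilde{R}$ exactly, because without the nilpotency hypothesis of (4) one loses tight control over the gap between $\bS^{-1}\gn_{\bR}$ and $\gn_{\bS^{-1}\bR}$, and one must carefully handle those $\bgp \in \min(\bR)$ that meet $\bS$ (so that $\bS^{-1}\bgp$ is not a proper ideal) without breaking injectivity of $\gp \mapsto \bS^{-1}\bgp$.
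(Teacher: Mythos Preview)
Your overall plan coincides with the paper's: pass to $\bR=R/\ga$ (Theorem~\ref{10Jan19-st1,2}), then to the semiprime ring $\widetilde R=\bR/\gn_{\bR}$, apply Theorem~\ref{A10Sep23} there, and lift everything back through the surjections. The architecture of parts (1)--(4) and the derivation of (4) from (1)--(2) via nilpotency of $\bS^{-1}\gn_{\bR}$ match exactly.

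There is one genuine overreach. From $s^{-1}\bR=\bR s^{-1}$ you conclude that $\bS^{-1}I$ is a two-sided ideal of $\bS^{-1}\bR$ for \emph{every} ideal $I$ of $\bR$. This is false in general: for $a\in I$ and normal $s\in\bS$ one has $as^{-1}=s^{-1}\omega_s(a)$, so $Is^{-1}=s^{-1}\omega_s(I)$, and $\omega_s(I)$ need not lie in $I$. What rescues the argument for the ideals you actually need is twofold. For $I=\gn_{\bR}$ the ideal is fixed by every automorphism of $\bR$, so $\omega_s(\gn_{\bR})=\gn_{\bR}$ and $\bS^{-1}\gn_{\bR}$ is indeed two-sided; your treatment of (2), (4b) and the containment in (1) survives. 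For $I=\bgp\in\min(\bR)$ the paper supplies the missing step (its steps (vii)--(viii)): each $\omega_{\bar s}$ permutes the finite set $\min(\bR)$, so some common power $n$ fixes every $\bgp$, and then
\[
\bgp\,\bar s^{-1}=\bar s^{-n}\,\omega_{\bar s}^{\,n}(\bgp)\,\bar s^{\,n-1}=\bar s^{-n}\,\bgp\,\bar s^{\,n-1}\subseteq \bar s^{-n}\bgp\subseteq\bS^{-1}\bgp .
\]
This finite-orbit device is exactly what your blanket normality claim was meant to replace, and it is the only place in the ideal-theoretic part where $|\min(\ga)|<\infty$ is used in an essential way.

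A second, related imprecision: you first assert that the image of $\bar s$ in $\bR/\bgp$ is a \emph{nonzero} normal element (hence regular, since $\bR/\bgp$ is prime), yet in your closing paragraph you worry about minimal primes $\bgp$ meeting $\bS$. In fact $\bS\cap\bgp=\emptyset$ for every $\bgp\in\min(\bR)$. The paper obtains this (its step (v)) as a consequence of Theorem~\ref{A10Sep23} applied to $(\widetilde R,\widetilde{\bS})$, which yields $\widetilde{\bS}\cap\widetilde{\bgp}=\emptyset$ for each $\widetilde{\bgp}\in\min(\widetilde R)$. Once this is established there is no exceptional case to handle, and your injectivity argument for (1) (via the contraction $\bR\cap\bS^{-1}\bgp=\bgp$, the paper's step (xii)) and your irredundancy argument for (4a) go through without the caveats you flag.
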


The proof of Theorem \ref{A2Oct23} is given in Section  \ref{MINPTNORM}. 
 
  Lemma \ref{a5Oct23} is a generalization  of Theorem \ref{A2Oct23} to a more general situation. \\
  
{\bf  Minimal primes of a semiprime ring and of its centre.}  For a ring $R$,  the {\bf restriction map}
\begin{equation}\label{SpecRZR}
\rho_R: \Spec (R)\ra \Spec (Z(R)), P\mapsto P\cap Z(R)
\end{equation}
is a well-defined map. In general,   the  map
\begin{equation}\label{rminZR}
\rho_{R,\min}: \min (R)\ra \min (Z(R)), \;\; \gp\mapsto \gp\cap Z(R)
\end{equation}
is not a  well-defined map, see Lemma \ref{b29Sep23}. For each natural number $n\geq 1$, Lemma \ref{b29Sep23} provides an example of a finitely generated algebra $A_n$ over a field $K$ such that $A_n$  is a domain,  the   centre  $Z(A_n)=P_n=K[z_1, \ldots , z_n]$ is polynomial algebra in $n$ variables and   $$\rho_{A_n} (\min (A_n))=\{ \{ 0\}, \gp_I:=(x_i)_{i\in I}\, | \, \emptyset \neq I\subseteq \{ 1, \ldots , n\}\}.$$ The set   $\rho_{A_n} (\min (A_n))$ contains prime ideals of all possible   heights.  

 Lemma \ref{b29Sep23} provides an example of a countably  generated algebra $A_\infty$ over a (countable)  field $K$ such that the set $\min (A_\infty)$  is uncountable and every minimal prime of $A_\infty$ is a completely prime ideal. Notice that if  the field $K$ is a countable field then the algebra $A_\infty$ contains a countably many elements.

 For a semiprime ring $R$ with $|\min (R)|<\infty$, Proposition   \ref{aB25Sep23} is a criterion for the  map $\rho_{R,\min}$
 being a  well-defined map.

\begin{proposition}\label{aB25Sep23}
Let $R$ be a semiprime ring with $|\min (R)|<\infty$.  Then the following statements  are equivalent: 
\begin{enumerate}

\item $\CC_{Z(R)}\subseteq \CC_R$.

\item $\CC_{Z(R)}\cap \gp=\emptyset$  for all $\gp \in \min (R)$.

\item The map $\rho_{R,\min}$ is a well-defined map. 

\end{enumerate}
\end{proposition}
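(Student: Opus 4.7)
The plan is to establish $(2) \Leftrightarrow (1)$ and $(2) \Leftrightarrow (3)$, building on two preliminary facts. Write $\min(R) = \{\gp_1, \ldots, \gp_n\}$ and set $J_i := \bigcap_{j \neq i} \gp_j$. Minimality of $\gp_i$ forces $J_i \not\subseteq \gp_i$, so $J_i \neq 0$, while $J_i \gp_i = \gp_i J_i \subseteq \bigcap_j \gp_j = 0$; hence every element of every minimal prime of $R$ is a two-sided zero-divisor. Secondly, in any prime ring $P$, every nonzero central element is regular: if $\bar z$ is central, nonzero and $\bar z \bar s = 0$, then expanding $(P \bar z P)(P \bar s P)$ and pushing $\bar z$ past the middle $P$ yields $0$, so primeness together with $P \bar z P \neq 0$ gives $P \bar s P = 0$, whence $\bar s = 0$.

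For $(1) \Leftrightarrow (2)$ I would argue as follows. If $z \in \CC_{Z(R)} \cap \gp_i$, then $z J_i = 0$ with $J_i \neq 0$, contradicting regularity of $z$, which proves $(1) \Rightarrow (2)$. For $(2) \Rightarrow (1)$, take $z \in \CC_{Z(R)}$ and $r \in R$ with $zr = 0$; the image $\bar z$ in the prime ring $R/\gp_i$ is nonzero by $(2)$ and central, hence regular, forcing $r \in \gp_i$ for every $i$, so $r \in \bigcap_i \gp_i = 0$. The case $rz = 0$ is identical by centrality.

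For $(2) \Leftrightarrow (3)$ the first step is to transfer finiteness of $\min$ from $R$ to $Z := Z(R)$ and to locate the minimal primes of $Z$ among the ideals $\gp_i \cap Z$. Semiprimeness of $R$ passes to $Z$ (if $z \in Z$ satisfies $z^2 = 0$, then $(zR)^2 = 0$ and hence $zR = 0$). Each $\gq_i := \gp_i \cap Z$ is a prime of $Z$ by well-definedness of the restriction map $\rho_R$, and $\bigcap_i \gq_i \subseteq \bigcap_i \gp_i = 0$, so every minimal prime of $Z$ contains some $\gq_i$ and, by minimality in $Z$, equals it; thus $\min(Z) \subseteq \{\gq_1, \ldots, \gq_n\}$ and $|\min(Z)| < \infty$. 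Applying the first preliminary fact inside the commutative semiprime ring $Z$ shows that the set of zero-divisors of $Z$ equals $\bigcup \min(Z)$, and finite prime avoidance then forces any prime of $Z$ consisting entirely of zero-divisors to coincide with one of the minimal primes. With this in place, $(2) \Rightarrow (3)$ holds because each $\gq_i$ then consists of zero-divisors in $Z$, hence is minimal; and $(3) \Rightarrow (2)$ holds because if $z \in \CC_Z \cap \gp_i$, then $z$ lies in the minimal prime $\gq_i$ of $Z$ and must be a zero-divisor in $Z$, a contradiction.

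The step I expect to be the main obstacle is the finiteness of $\min(Z(R))$ together with the description of zero-divisors of $Z(R)$ as $\bigcup \min(Z(R))$; once these are in hand, finite prime avoidance produces $(2) \Leftrightarrow (3)$ essentially for free. The technical device resolving it is the inclusion $\min(Z(R)) \subseteq \{\gp \cap Z(R) : \gp \in \min(R)\}$, obtained purely formally from the identity $\bigcap_{\gp \in \min(R)} (\gp \cap Z(R)) = 0$.
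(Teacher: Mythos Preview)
Your proof is correct and follows essentially the same route as the paper, which argues via the contrapositive (Proposition~\ref{C25Sep23}) using the same ingredients: the complementary ideals $J_i=\gp_i^c$, the description of zero-divisors of $Z(R)$ as $\bigcup\min(Z(R))$, and finite prime avoidance. Your version is in fact slightly more self-contained: you explicitly establish $\min(Z(R))\subseteq\{\gp\cap Z(R):\gp\in\min(R)\}$ and hence $|\min(Z(R))|<\infty$, a fact the paper invokes in its $(3\Rightarrow 2)$ step without proof.
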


\begin{proof} Proposition \ref{aB25Sep23} is equivalent to Proposition \ref{C25Sep23}.
\end{proof}

Proposition \ref{B25Sep23} is a criterion for the map $\rho_{R,\min}$  being  a surjection.

\begin{proposition}\label{B25Sep23}
Let $R$ be a semiprime ring with $|\min (R)|<\infty$ and $|\min (Z(R))|<\infty$.  Then the following statements  are equivalent: 
\begin{enumerate}

\item $\CC_{Z(R)}\subseteq \CC_R$.

\item $\CC_{Z(R)}\cap \gp=\emptyset$  for all $\gp \in \min (R)$.

\item The map $\rho_{R,\min}$  is a well-defined map. 

\item The map $\rho_{R,\min}$  is a surjection. 

\end{enumerate}
\end{proposition}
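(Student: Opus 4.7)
The plan is to leverage Proposition \ref{aB25Sep23}, which already establishes the equivalence of (1), (2), (3) (and does not require $|\min (Z(R))|<\infty$). So only (4) remains to be fit into the chain. Since a surjection is by definition a well-defined map, the implication $(4)\Rightarrow (3)$ is immediate. The essential content of the proposition is therefore the reverse implication $(3)\Rightarrow (4)$.

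For $(3)\Rightarrow (4)$, the first observation is that the centre $Z(R)$ of a semiprime ring is itself semiprime: if $z\in Z(R)$ satisfies $z^2=0$, then $(zR)^2=z^2R=0$, so $zR=0$ by semiprimeness of $R$, and hence $z=0$. Combined with the hypothesis $|\min (R)|<\infty$ and the standard fact that every prime of $R$ restricts to a prime of $Z(R)$ (via the map $\rho_R$ in (\ref{SpecRZR})), this yields the finite intersection
\[
0 \;=\; \gn_R\cap Z(R) \;=\; \bigcap_{\gp\in \min (R)}\bigl(\gp\cap Z(R)\bigr)
\]
of proper prime ideals of the commutative ring $Z(R)$.

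Now fix an arbitrary $\gq\in \min (Z(R))$. Then $\gq$ contains the above finite intersection, and since $\gq$ is prime in a commutative ring, it must contain $\gp_0\cap Z(R)$ for some $\gp_0\in \min (R)$ (if $\gq$ contains $I_1\cap\cdots\cap I_n$, then $\gq$ contains the product $I_1\cdots I_n$ and hence some factor $I_j$). By hypothesis (3), the ideal $\gp_0\cap Z(R)$ already lies in $\min (Z(R))$, so minimality of $\gq$ forces $\gq=\gp_0\cap Z(R)=\rho_{R,\min}(\gp_0)$. This proves surjectivity of $\rho_{R,\min}$, establishing (4).

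There is no real obstacle here: the argument is essentially a one-line application of primeness in a commutative ring, once Proposition \ref{aB25Sep23} is in hand. It is worth remarking that the hypothesis $|\min (Z(R))|<\infty$ is not actually required for $(3)\Rightarrow (4)$; indeed, (3) together with $|\min (R)|<\infty$ automatically yields $|\min (Z(R))|\leq |\min (R)|<\infty$, since the above argument exhibits every minimal prime of $Z(R)$ as the image of some minimal prime of $R$ under the finite-domain map $\rho_{R,\min}$.
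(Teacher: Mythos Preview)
Your proof is correct, and it takes a genuinely different route from the paper's. The paper proves $(1)\Rightarrow(4)$ by localizing: it uses $|\min(Z(R))|<\infty$ to identify $Q(Z(R))=\CC_{Z(R)}^{-1}Z(R)$ with the finite product of residue fields $\prod_{\gq\in\min(Z(R))}k(\gq)$, then uses $\CC_{Z(R)}\subseteq\CC_R$ to embed $R$ into $\CC_{Z(R)}^{-1}R\simeq\prod_{\gq}R_\gq$, and finally invokes Proposition~\ref{A25Sep23} (each $R_\gq\neq 0=R_\gq\gq$, so $\gq\in\im(\rho_R)$). Your argument for $(3)\Rightarrow(4)$ is instead a direct and elementary prime-containment argument in the commutative ring $Z(R)$: the finite intersection $\bigcap_{\gp\in\min(R)}(\gp\cap Z(R))=0$ lies in every $\gq\in\min(Z(R))$, so some $\gp_0\cap Z(R)\subseteq\gq$, and hypothesis~(3) forces equality. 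This bypasses Proposition~\ref{A25Sep23} and the localization machinery entirely, and as you correctly observe, it shows that the hypothesis $|\min(Z(R))|<\infty$ is redundant (it follows from (3) and $|\min(R)|<\infty$). The paper's approach, on the other hand, yields the structural decomposition $\CC_{Z(R)}^{-1}R\simeq\prod_{\gq}R_\gq$ as a byproduct, which feeds into Corollary~\ref{aC25Sep23}.
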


  Proofs of Proposition  \ref{aB25Sep23} and  Proposition \ref{B25Sep23} are given in Section  \ref{MINPZR}. \\


\section{Descriptions of minimal primes of localizations of  semiprime  rings} \label{MINPRLOCSPRIMES} 

At the beginning of the section, we recall some results on localizations that are used in the paper. Suppose  that $\ga$ is an ideal of a ring $R$. In general, the left ideal  $S^{-1}\ga$ of the ring  $S^{-1}R$   is {\em not} an ideal. 
 Proposition \ref{A11Sep23} is a criterion for the left ideal $S^{-1}\ga$ being an ideal of $S^{-1}R$. Corollary \ref{a6Oct23} presents criteria for a localization of a prime ideal being a prime ideal (under certain conditions). 
For a ring $R$ and its left denominator set $S\in \Den_l(R)$,  Proposition \ref{29Sep23} describes the set of minimal primes of the ring $S^{-1}R$ via the minimal primes of the ring $R$ provided some general conditions hold.  \\

 {\bf The largest regular left Ore set and the largest left quotient ring of a ring}. Let $R$ be a ring. A {\bf multiplicatively closed subset} $S$ of $R$ or a {\bf
 multiplicative subset} of $R$ (i.e., a multiplicative sub-semigroup of $(R,
\cdot )$ such that $1\in S$ and $0\not\in S$) is said to be a {\bf
left Ore set} if it satisfies the {\bf left Ore condition}: for
each $r\in R$ and $s\in S$, 
  $$Sr\bigcap Rs\neq \emptyset.$$
Let $\Ore_l(R)$ be the set of all left Ore sets of $R$.
  For  $S\in \Ore_l(R)$, $\ass_l (S) :=\{ r\in
R\, | \, sr=0 \;\; {\rm for\;  some}\;\; s\in S\}$  is an ideal of
the ring $R$.

A left Ore set $S$ is called a {\bf left denominator set} of the
ring $R$ if $rs=0$ for some elements $ r\in R$ and $s\in S$ implies
$tr=0$ for some element $t\in S$, i.e. $r\in \ass (S)$. Let
$\Den_l(R)$ be the set of all left denominator sets of $R$. For
$S\in \Den_l(R)$, the ring $$S^{-1}R=\{ s^{-1}r\, | \, s\in S, r\in R\}$$
 is called  the {\bf left localization} of the ring $R$ at $S$ (the {\bf
left quotient ring} of $R$ at $S$). In Ore's method of localization one can localize  precisely at left denominator sets.
 In an obvious way, the {\bf right Ore condition}, {\bf  right Ore} and {\bf right  denominator sets} of $R$ are defined and they are denoted by $\Ore_r(R)$ and $\Den_r(R)$, respectively. For
$S\in \Den_r(R)$, the ring $$RS^{-1}=\{ rs^{-1}\, | \, s\in S, r\in R\}$$
 is called  the {\bf right localization} of the ring $R$ at $S$ (the {\bf
right quotient ring} of $R$ at $S$). For  $S\in \Ore_r(R)$, $\ass_r (S) :=\{ r\in
R\, | \, rs=0 \;\; {\rm for\;  some}\;\; s\in S\}$  is an ideal of
the ring $R$.
 
 A left and right Ore or denominator set is  called  an {\bf Ore set}  or a {\bf denominator set} and these sets are denoted by $\Ore (R)$ and $\Den (R)$, respectively. If $S\in \Den (R)$ then $$S^{-1}R\simeq RS^{-1}.$$ 
For an ideal $\ga$ of $R$, let $\Den_*(R,\ga):=\{ S\in \Den_*(R)\, | \, \ass_* (S)=\ga\}$ where $*\in \{ l,r,\emptyset\}$. 

In general, the set $\CC_R$ of regular elements of a ring $R$ is
neither left nor right Ore set of the ring $R$ and as a
 result neither left nor right classical  quotient ring ($Q_{l,cl}(R):=\CC_R^{-1}R$ and
 $Q_{r,cl}(R):=R\CC_R^{-1}$) exists.
 There  exists the largest (w.r.t. $\subseteq$)
 regular left Ore set $S_l(R)$, \cite{larglquot}. This means that the set $S_l(R)$ is an Ore set of
 the ring $R$ that consists
 of regular elements (i.e. $S_l(R)\subseteq \CC_R$) and contains all the left Ore sets in $R$ that consist of
 regular elements. Also, there exists the largest regular  right (respectively, left and right) Ore set  $S_r(R)$ (respectively, $S_r(R)$, $S_{l,r}(R)$) of the ring $R$.
 In general,  the sets $\CC_R$, $S_l(R)$, $S_r(R)$ and $S_{l,r}(R)$ are distinct, for example,
 when $R= \mI_1= K\langle x, \der , \int\rangle$  is the ring of polynomial integro-differential operators  over a field $K$ of characteristic zero,  \cite{Bav-intdifline}. In  \cite{Bav-intdifline},  these four sets are explicitly described  for $R=\mI_1$.

\begin{definition} (\cite{Bav-intdifline, larglquot}.)    The ring
$$Q_l(R):= S_l(R)^{-1}R$$ (respectively, $Q_r(R):=RS_r(R)^{-1}$ and
$Q(R):= S_{l,r}(R)^{-1}R\simeq RS_{l,r}(R)^{-1}$) is  called
the {\bf largest left} (respectively, {\bf right and two-sided})
{\bf quotient ring} of the ring $R$.
\end{definition}
 In general, the rings $Q_l(R)$, $Q_r(R)$ and $Q(R)$
are not isomorphic, for example, when $R= \mI_1$, \cite{Bav-intdifline}.

Let $R$ be a ring. We say that two left localizations of $R$  are {\bf equal} and write $S^{-1}R = S'^{-1}R$ if the map $ S^{-1}R \ra S'^{-1}R$, $s^{-1}r\mapsto s^{-1}r$, is a well-defined isomorphism. This isomorphism is an $R$-isomorphism.   Then the map $S'^{-1}R\ra S^{-1}R$, $s'^{-1}r\mapsto s'^{-1}r$, is also a ring  $R$-isomorphism. So, two localizations are equal iff there is an $R$-{\em isomorphism} between them. So, the relation of `equality' is an equivalence relation on the set of left localizations of the ring $R$. The set of all the equivalence classes is denoted by $\Loc_l(R)$.
Clearly, $S^{-1}R = S'^{-1}R$ iff $\ass_l (S)=\ass_l (T)$, $ \frac{s}{1}\in (S'^{-1}R)^\times$ for all $s\in S$ and $ \frac{s'}{1}\in (S^{-1}R)^\times$ for all $s'\in S'$.

 The next
theorem gives various properties of the ring $Q_l(R)$. In
particular, it describes its group of units.

\begin{theorem}\label{4Jul10}
(\cite[Theorem 2.8]{larglquot}.)
\begin{enumerate}
\item $ S_l (Q_l(R))= Q_l(R)^\times$ {\em and} $S_l(Q_l(R))\cap R=
S_l(R)$.
 \item $Q_l(R)^\times= \langle S_l(R), S_l(R)^{-1}\rangle$, {\em i.e., the
 group of units of the ring $Q_l(R)$ is generated by the sets
 $S_l(R)$ and} $S_l(R)^{-1}:= \{ s^{-1} \, | \, s\in S_l(R)\}$.
 \item $Q_l(R)^\times = \{ s^{-1}t\, | \, s,t\in S_l(R)\}$.
 \item $Q_l(Q_l(R))=Q_l(R)$.
\end{enumerate}
\end{theorem}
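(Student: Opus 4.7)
The plan is to establish part (1) first and derive (2)--(4) as quick consequences. For any ring $A$, the group of units $A^\times$ is a left denominator set of regular elements, so $A^\times \subseteq S_l(A)$ by maximality; applied to $A = Q_l(R)$ this yields the easy inclusion $Q_l(R)^\times \subseteq S_l(Q_l(R))$. The content of part (1) is therefore the reverse inclusion together with the computation $S_l(Q_l(R)) \cap R = S_l(R)$.

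The main step is to show $S_l(Q_l(R)) \subseteq Q_l(R)^\times$. I would take $q \in S_l(Q_l(R))$ and write $q = s^{-1}r$ with $s \in S_l(R)$, $r \in R$. Since $s \in S_l(R) \subseteq Q_l(R)^\times \subseteq S_l(Q_l(R))$ and $S_l(Q_l(R))$ is a multiplicative set of regular elements of $Q_l(R)$, $r = sq$ is regular in $Q_l(R)$; because the localization map $R \to Q_l(R)$ is injective (as $\ass_l(S_l(R)) = 0$), $r$ is also regular in $R$. The crux is to prove that $r \in S_l(R)$: I would consider the multiplicative subset $T \subseteq R$ generated by $S_l(R) \cup \{r\}$ and show that $T \in \Den_l(R, 0)$, whence $T \subseteq S_l(R)$ by maximality. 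Every element of $T$ is regular and the denominator condition is automatic; the essential point is the left Ore condition for the pair $(r, x)$ with $x \in R$, which I would derive by writing $r^{-1} \cdot \tfrac{x}{1} = s'^{-1} b$ in $Q_l(R)$ with $s' \in S_l(R)$, $b \in R$, and then lifting the resulting identity $s'x = br$ from $Q_l(R)$ back to $R$ via injectivity. Once $r \in S_l(R)$ is known, $q = s^{-1}r$ is a product of two units, so $q \in Q_l(R)^\times$. Running the same argument with $q \in R$ yields $S_l(Q_l(R)) \cap R = S_l(R)$. The main obstacle is precisely this Ore-condition lift from $Q_l(R)$ to $R$, where one must be careful that the representative chosen in $Q_l(R)$ can be compared to an identity in $R$.

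Parts (2)--(4) follow quickly. For (2), every $q \in Q_l(R)^\times$ now has a representation $q = s^{-1}r$ with $s, r \in S_l(R)$, so $q \in \langle S_l(R), S_l(R)^{-1} \rangle$; the reverse inclusion is trivial. For (3), the inclusion $\{\, s^{-1}t \,|\, s, t \in S_l(R)\,\} \subseteq Q_l(R)^\times$ is clear, and the reverse holds because any finite product of elements of $S_l(R) \cup S_l(R)^{-1}$ can be reduced to the form $s^{-1}t$ by iterated application of the left Ore condition, using the identity $R \cap Q_l(R)^\times = S_l(R)$ from (1) to keep the numerators inside $S_l(R)$. Finally, (4) is immediate: $Q_l(Q_l(R)) = S_l(Q_l(R))^{-1} Q_l(R) = (Q_l(R)^\times)^{-1} Q_l(R) = Q_l(R)$, since localizing a ring at its own group of units is the identity operation.
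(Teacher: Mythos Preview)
The paper does not prove this theorem; it is quoted from \cite[Theorem 2.8]{larglquot}, so there is no in-paper proof to compare against. Your overall strategy is the natural one and leads to a correct proof, but the sketch of the key step contains a circularity and an unnecessary complication that you should fix.

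The circularity is in the phrase ``writing $r^{-1}\cdot\tfrac{x}{1}=s'^{-1}b$ in $Q_l(R)$'': at this point you do not yet know that $r$ is a unit of $Q_l(R)$, so $r^{-1}$ is not available there. What you actually have is $r\in S_l(Q_l(R))$, and you should invoke the left Ore condition for $S_l(Q_l(R))$ in $Q_l(R)$: given $x\in R$ there exist $q'\in S_l(Q_l(R))$ and $y\in Q_l(R)$ with $q'x=yr$. Writing $q'=s^{-1}r'$ and $y=s_1^{-1}r_1$ with $s,s_1\in S_l(R)$ and $r',r_1\in R$, one has $r'=sq'\in S_l(Q_l(R))\cap R$; after a common-denominator step using the Ore condition for $S_l(R)$ one obtains an identity $(s_2r')x=(ar_1)r$ in $R$ with $s_2\in S_l(R)$.

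The complication is your choice of $T=\langle S_l(R),r\rangle$: you would then have to check the Ore condition for \emph{all} elements of $T$, not just for $r$, and your sketch does not address this. It is cleaner to set $T:=S_l(Q_l(R))\cap R$ from the start. This $T$ is multiplicatively closed, contains $S_l(R)$, consists of regular elements of $R$, and the computation above (noting $s_2r'\in T$) shows $T\in\Den_l(R,0)$, whence $T=S_l(R)$ by maximality. The equality $S_l(Q_l(R))=Q_l(R)^\times$ then follows exactly as you say: for $q=s^{-1}r\in S_l(Q_l(R))$ one has $r=sq\in T=S_l(R)$, so $q$ is a product of two units. Your derivations of (2)--(4) from (1) are fine.
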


 A ring $R$ has {\bf
finite left rank} (i.e. {\bf finite left uniform dimension}) if
there are no infinite direct sums of nonzero left ideals in $R$. For a non-empty subset $T$ of a ring $R$, the sets 
$$\lann_R(T):=\{ r\in R\, | \, rT=0\}\;\; {\rm  and}\;\;\rann_R(T):=\{ r\in R\, | \, Tr=0\}$$
 are called the {\bf left} and {\bf right annihilators} of $T$, respectively. They are left and right ideals of $R$, respectively. If they coincide, we  write $\ann_R(T)$ for their common value. A ring $R$ is called a {\bf left (right) Goldie ring} if $R$ has finite left (right) uniform dimension and $R$ satisfies the ascending chain condition (the a.c.c.) on left (right) annihilators.

The next theorem is a semisimplicity criterion for the ring
$Q_l(R)$  (statements 2-5 are  Goldie's
Theorem).

\begin{theorem}\label{5Jul10}
(\cite[Theorem 2.9]{larglquot}.) The following properties of a ring $R$ are equivalent:
\begin{enumerate}
\item  $Q_l(R)$ is a semisimple ring. \item $Q_{l, cl}(R)$  exists
and is a semisimple ring. \item $R$ is a left order in a
semisimple ring. \item $R$ has finite left rank, satisfies the
ascending chain condition on left annihilators and is a semi-prime
ring. \item A left ideal of $R$ is essential iff it contains a
regular element.
\end{enumerate}
If one of the equivalent conditions holds then $S_0(R) = \CC_R$ and
$Q_l(R) = Q_{l,cl}(R)$.
\end{theorem}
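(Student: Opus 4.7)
The plan is to reduce to Goldie's theorem (the equivalence of conditions (2)--(5)) and then splice condition (1) into the cycle using the universal property of $S_l(R)$. Statements (2)--(5) are the standard Goldie equivalence, so the strategy is: first establish (2) $\Leftrightarrow$ (3) $\Leftrightarrow$ (4) $\Leftrightarrow$ (5) along the classical route, then prove (1) $\Leftrightarrow$ (2) by comparing $S_l(R)$ and $\CC_R$.

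For the classical part, (2) $\Leftrightarrow$ (3) is essentially the definition of ``left order'' together with the fact that $Q_{l,cl}(R)=\CC_R^{-1}R$ is the ring that realises $R$ as a left order in a semisimple ring. For (3) $\Rightarrow$ (4), I would exploit that a semisimple ring $Q$ has finite left uniform dimension, satisfies the a.c.c.\ on left annihilators, and is semiprime (being a finite product of simple Artinian rings). Then since $R$ embeds in $Q$ with $Q=\CC_R^{-1}R$, these three properties descend to $R$: uniform dimension descends because an essential extension of a left ideal to $Q$ is obtained by localisation; left annihilators in $R$ are intersections with $R$ of left annihilators in $Q$; and semiprimeness follows because a nilpotent ideal of $R$ generates a nilpotent ideal of $Q$.

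The genuinely hard step, and the main obstacle, is (4) $\Rightarrow$ (5) $\Rightarrow$ (2). Here one runs Goldie's key argument: in a semiprime ring $R$ of finite left uniform dimension with a.c.c.\ on left annihilators, every nil one-sided ideal is zero, every left annihilator that is maximal among left annihilators of non-nilpotent elements is of the form $\lann_R(c)$ with $c$ regular, and an essential left ideal must meet $\CC_R$. From (5) one then verifies the left Ore condition for $\CC_R$: given $r\in R$ and $c\in \CC_R$, the left ideal $\{x\in R\mid xr\in Rc\}$ is essential (because $Rc$ is essential by (5)), hence contains a regular element, producing the required Ore relation. The denominator condition is automatic from (5) as well. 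Semisimplicity of $\CC_R^{-1}R$ is then deduced from the fact that every left ideal of $\CC_R^{-1}R$ is of the form $\CC_R^{-1}L$ with $L$ closed under the relation ``essential in'', and such $L$ is a direct summand once one knows essential left ideals contain units of $\CC_R^{-1}R$.

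Finally, for (1) $\Leftrightarrow$ (2): by construction $S_l(R)\subseteq \CC_R$. If (2) holds then $\CC_R\in \Den_l(R,0)$, and by maximality of $S_l(R)$ among regular left Ore sets we get $\CC_R\subseteq S_l(R)$, hence $S_l(R)=\CC_R$ and $Q_l(R)=Q_{l,cl}(R)$, which is semisimple; this also proves the last sentence of the theorem. Conversely, suppose (1) holds. Since $S_l(R)^{-1}R$ is a flat left localisation, each $c\in \CC_R$ remains regular in $Q_l(R)$; but in a semisimple ring every regular element is a unit, so $\tfrac{c}{1}\in Q_l(R)^\times$. By Theorem~\ref{4Jul10}, $Q_l(R)^\times=S_l(Q_l(R))$ and $S_l(Q_l(R))\cap R=S_l(R)$, giving $c\in S_l(R)$. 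Thus $\CC_R\subseteq S_l(R)$, and combined with the reverse inclusion we obtain $\CC_R=S_l(R)\in \Den_l(R,0)$ and $Q_{l,cl}(R)=Q_l(R)$ is semisimple, yielding (2) and the final assertion simultaneously.
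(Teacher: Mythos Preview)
The paper does not contain a proof of this theorem: it is quoted verbatim as \cite[Theorem~2.9]{larglquot} and no argument is given here, so there is nothing in the present paper to compare your proposal against.

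That said, your outline is sound and follows the expected route. The equivalence of (2)--(5) is indeed classical Goldie, and your sketch of each implication is accurate. For the bridge (1)~$\Leftrightarrow$~(2) your argument is correct, with one small remark: the flatness of $S_l(R)^{-1}R$ as a \emph{right} $R$-module (which is what a left denominator set gives you) immediately shows only that right multiplication by $c\in\CC_R$ stays injective in $Q_l(R)$, i.e.\ that $xc=0\Rightarrow x=0$ there. This one-sided regularity is enough, because in a semisimple (hence Artinian) ring a one-sided regular element is already a unit; you might want to say this explicitly rather than asserting full regularity via flatness. The appeal to Theorem~\ref{4Jul10} to pull $c$ back into $S_l(R)$ is exactly the intended mechanism, and your derivation of the final sentence ($S_l(R)=\CC_R$, $Q_l(R)=Q_{l,cl}(R)$) is clean.
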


Let $R$ be a  ring,  $S\in \Den_l(R, \ga)$, $$\pi_\ga :R\ra R/\ga, \;\;r\mapsto  \br :=r+\ga \;\; {\rm and}\;\;\s_S:R\ra S^{-1}R, \;\;r\mapsto \frac{r}{1}.$$
 Let $\CI_l(R)$ and $\CI (R)$ be the sets of left and two-sided ideals of the ring $R$, respectively. Then the maps below are well-defined:
\begin{equation}\label{SCICI}
S^{-1}: \CI_l(R)\ra \CI_l(S^{-1}R), \;\; I\mapsto S^{-1}I\;\; {\rm and}\;\; \s_S^{-1}: \CI_l(S^{-1}R)  \ra    \CI_l(R), \;\; J\mapsto \s_S^{-1}(J).
\end{equation}
Furthermore, $S^{-1}\s_S^{-1}(J)=J$ for all $J\in \CI_l(S^{-1}R)$. 
In general, the map $S^{-1}: \CI (R)\ra \CI(S^{-1}R), \;\; I\mapsto S^{-1}I$ is {\em not} defined (see \cite[Example 10L]{GW}) but the map 
\begin{equation}\label{SCICI-1}
\s_S^{-1}:\CI (S^{-1}R)\ra   \CI(R), \;\; J\mapsto \s_S^{-1}(J)
\end{equation}
is a well-defined map and  $S^{-1}\s_S^{-1}(J)=J$ for all $J\in \CI(S^{-1}R)$. So, the localized ring $S^{-1}R$ has `less' ideals than  expected since the images of some ideals of the ring $R$ are only {\em left} ideals but not two-sided ideals of the ring $S^{-1}R$. This phenomenon does not occur if the ring $R$ is a commutative ring or the denominator set $S$ consists of central elements of $R$. Nonetheless all the ideal of $S^{-1}R$ can be obtained from some ideals of $R$.  The latter are precisely the ideals of $R$ that stay two-sided ideals under the localization at $S$. \cite[Example 10M]{GW} is an example  of  a prime ring  $S^{-1}R$ such that the ideal $\s_S^{-1}(0)$  of $R$ is not a prime or even  semiprime ideal. \\

{\bf Criterion for the left ideal $S^{-1}\ga$ being an ideal of $S^{-1}R$ where $\ga $ is an ideal of $R$.}
 Let $R$ be a  ring, $S\in \Den_l(R,\ga)$, $\bR :=R/\ga$ and $\pi_\ga : R\ra \bR$, $r\mapsto r+\ga$. Then $$\bS :=\pi_\ga (S)\in \Den_l(\bR , 0)\;\; {\rm  and}\;\;S^{-1}R\simeq \bS^{-1}\bR.$$  For a right  $R$-module $M$, the set $$\tor_{r,S}(M):=\{ m\in M\, | \, ms=0\;\; {\rm for \; some}\;\; s\in S\}$$ is the {\bf set of $S$-torsion elements} of $M$.  In general,  $\tor_{r,S}(M)$ is {\em not} a submodule of $M$ unless $S$ is a right denominator set of $R$.

Suppose  that $\gb$ is an ideal of a ring $R$. In general, the left ideal  $S^{-1}\gb$ of the ring  $S^{-1}R$   is {\em not} an ideal. Proposition \ref{A11Sep23} is a criterion for the left ideal $S^{-1}\gb$ being an ideal of $S^{-1}R$.

\begin{proposition}\label{A11Sep23}
Let $R$ be a  ring, $S\in \Den_l(R,\ga)$ and  $\gb$ be an ideal of $R$. Then the following statements are equivalent:

\begin{enumerate}

\item The left ideal $S^{-1}\gb$ of the ring $S^{-1}R$ is an ideal.

\item For all elements $s\in S$, $\gb s^{-1}\subseteq S^{-1}\gb$.

\item If $rs\in \gb$ for some elements $r\in R$ and  $s\in S$ then $s'r\in \gb$ for some $s'\in S$. 

\item $rs\in \ga +\gb$ for some elements $s\in S$ and $r\in R$ then $s'r\in \ga +\gb$ for some $s'\in S$.

\item  $\tor_{r,\bS}(\bR/\pi_\ga (\gb ) )  \subseteq \tor_{l,\bS}(\bR/\pi_\ga (\gb ) ) $.

\item For each $s\in S$, the ascending chain of left ideals of the rings $S^{-1}R$, $\gb_0'\subseteq \gb_1'\subseteq \cdots\subseteq \gb_i'\subseteq \cdots$, stabilizes where $\gb_i':=\sum_{j=0}^iS^{-1}\gb s^{-j}$ for $i\geq 0$. 

\end{enumerate}
\end{proposition}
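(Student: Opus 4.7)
The plan is to establish the six equivalences via the scheme $(1)\Leftrightarrow(2)$, $(2)\Leftrightarrow(3)\Leftrightarrow(4)\Leftrightarrow(5)$, and $(2)\Leftrightarrow(6)$. For $(1)\Leftrightarrow(2)$: since $S^{-1}\gb$ is by construction a left ideal of $S^{-1}R$, and $\gb R\subseteq \gb$ makes it closed under right multiplication by $\sigma_S(R)$, while $S^{-1}R$ is generated as a ring by $\sigma_S(R)\cup\{\sigma_S(s)^{-1}\mid s\in S\}$, the condition that $S^{-1}\gb$ be a two-sided ideal is equivalent to $S^{-1}\gb\cdot s^{-1}\subseteq S^{-1}\gb$ for every $s\in S$. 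Writing an arbitrary element of $S^{-1}\gb$ as $t^{-1}b$ and using its left-ideal structure, this further reduces to $\gb s^{-1}\subseteq S^{-1}\gb$, which is (2).

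For $(2)\Leftrightarrow(3)$, the forward direction uses the identity $\sigma_S(rs)\cdot s^{-1}=\sigma_S(r)$ in $S^{-1}R$: if $rs\in\gb$, then $\sigma_S(r)\in \gb s^{-1}\subseteq S^{-1}\gb$ by (2), and the standard description $\sigma_S^{-1}(S^{-1}\gb)=\{r\in R\mid \exists\, s'\in S,\ s'r\in\gb\}$ delivers the required $s'$. For the reverse direction, given $b\in\gb$ and $s\in S$, the left Ore condition supplies $s_1\in S$ and $r_1\in R$ with $s_1 b=r_1 s$; then $r_1 s=s_1 b\in\gb$, so (3) yields $s'\in S$ with $s'r_1\in\gb$, and in $S^{-1}R$ we compute $bs^{-1}=s_1^{-1}r_1=(s's_1)^{-1}(s'r_1)\in S^{-1}\gb$.

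The equivalence $(3)\Leftrightarrow(4)$ is a routine clearing argument based on $\ga=\ass_l(S)$: any element of $\ga$ is annihilated on the left by some element of $S$, so one may freely replace $\gb$ by $\ga+\gb$ (or vice versa) in the hypothesis or conclusion of (3) at the cost of passing to a larger element of $S$. The equivalence $(4)\Leftrightarrow(5)$ is simply a rewriting of the torsion notation: the right (respectively left) $\bS$-torsion elements of $\bR/\pi_\ga(\gb)\cong R/(\ga+\gb)$ are exactly the classes $\bar r+\pi_\ga(\gb)$ with $rs$ (respectively $sr$) in $\ga+\gb$ for some $s\in S$, so the inclusion in (5) is precisely the implication in (4).

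The main obstacle, where two-sidedness of $\gb$ is used decisively, is $(2)\Leftrightarrow(6)$. The direction $(2)\Rightarrow(6)$ is immediate since (2) yields $S^{-1}\gb\, s^{-j}\subseteq S^{-1}\gb$ for every $j\geq 0$ by induction, forcing the chain to be constant with $\gb_i'=S^{-1}\gb$. For $(6)\Rightarrow(2)$, fix $s\in S$ and choose $n$ with $\gb_n'=\gb_{n+1}'$; this gives $S^{-1}\gb\, s^{-(n+1)}\subseteq \gb_n'$. For any $b\in\gb$ write $bs^{-(n+1)}=\sum_{j=0}^{n}t_j^{-1}b_j s^{-j}$ with $t_j\in S$ and $b_j\in\gb$, and right-multiply both sides by $s^{n}$. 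The left-hand side becomes $bs^{-1}$, while on the right every exponent $s^{n-j}$ with $0\leq j\leq n$ is non-negative, so $b_j s^{n-j}\in\gb$ by the ideal property, whence $bs^{-1}\in S^{-1}\gb$. The trick is thus to use a stabilization point $n$ to convert all negative $s$-exponents on the right-hand side into non-negative ones, which the ideal structure of $\gb$ absorbs.
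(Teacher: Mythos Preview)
Your proof is correct and follows essentially the same route as the paper's: the same cycle of implications $(1)\Leftrightarrow(2)$, $(2)\Leftrightarrow(3)\Leftrightarrow(4)\Leftrightarrow(5)$, and $(2)\Leftrightarrow(6)$, with the same Ore-condition manipulation for $(3)\Rightarrow(2)$ and the same right-multiplication-by-$s^n$ trick for $(6)\Rightarrow(2)$. Your write-up of $(6)\Rightarrow(2)$ is in fact more explicit than the paper's, which compresses the argument into a single (somewhat cryptic) display.
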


\begin{proof} $(1\Leftrightarrow 2\Rightarrow 6)$ Straightforward.

 $(2\Rightarrow 3)$ Suppose that $rs\in \gb$ for some elements $r\in R$ and  $s\in S$. Then $$\frac{r}{1}\in \gb s^{-1}\subseteq S^{-1}\gb,$$ and so  $s'r\in \gb$ for some $s'\in S$.

$(3\Rightarrow 2)$ Let $b\in \gb$ and $s\in S$. Then $bs^{-1}=s_1^{-1}r$ for some elements $s_1\in S$ and $r\in R$ or, equivalently, 
$$ts_1b=trs$$ for some element $t\in S$. It follows from the inclusion $(tr)s=ts_1b\in \gb$ that $t_1tr\in \gb$ for some element $t_1\in S$.
Now, $bs^{-1}=(t_1ts_1)^{-1}\cdot t_1tr\in S^{-1}\gb$. 

$(3\Rightarrow 4)$ $rs\in \ga +\gb$ for some elements $s\in S$ and $r\in R$. There is an element $s_1\in S$ such that  $s_1rs\in\gb$. Now, by statement 3, there is an element $s_2\in S$ such that $s_2s_1r\in \gb\subseteq \ga+\gb$.

$(4\Rightarrow 3)$ Suppose that  $rs\in \gb\subseteq \ga+\gb$ for some elements $r\in R$ and  $s\in S$. By statement 4, there is an element $s_1\in S$ such that $ s_1r\in \ga +\gb$. Hence,  $ s_2s_1r\in  \gb$ for some $s_2\in S$.

$(4\Leftrightarrow 5)$ The equivalence  is an obvious re-writing of the inclusion  $\tor_{r,\bS}(\bR/\pi_\ga (\gb ) )  \subseteq \tor_{l,\bS}(\bR/\pi_\ga (\gb ) ) $ in terms of the ring $R$ (since $\bR/\pi_\ga (\gb )\simeq R/(\ga +\gb)$).

$(6\Rightarrow 2)$ For each $s\in S$, there is a natural number  $n\geq 0$ such that $\gb_n'=\gb_{n+1}'$. Hence, $S^{-1}\gb\supseteq \sum_{i=0}^nS^{-1}\gb s^i\subseteq S^{-1}\gb s^{-1}$, and statement 2 follows.
\end{proof}

\begin{corollary}\label{aA11Sep23}
Let $R$ be a  ring, $S\in \Den_l(R,\ga)$ and  $\gb$ be an ideal of $R$. Then the left ideal $S^{-1}\gb$ of the ring $S^{-1}R$ is not an ideal iff there is an element  $s\in S$ such that  the ascending chain of left ideals of the rings $S^{-1}R$, $\gb_0'\subset \gb_1'\subset \cdots\subset \gb_i'\subset\cdots$, is strictly increasing  where $\gb_i'=\sum_{j=0}^iS^{-1}\gb s^{-j}$ for $i\geq 0$. In this case, neither the ring $R$ nor its localization $S^{-1}R$ is a left Noetherian ring. 
\end{corollary}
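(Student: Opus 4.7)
The first claim of the corollary is immediate from Proposition~\ref{A11Sep23}: the equivalence $(1)\Leftrightarrow (6)$ stated there is logically equivalent to its contrapositive, and the contrapositive reads exactly as ``$S^{-1}\gb$ is not an ideal of $S^{-1}R$ iff there exists $s\in S$ such that the chain $\gb_0'\subseteq \gb_1'\subseteq\cdots$ does not stabilize.'' Since the chain is, by construction, weakly ascending, failure to stabilize is equivalent to the existence of a subsequence along which the inclusions are strict; replacing the indices by this subsequence produces the strictly ascending chain as asserted. So the first part should  require only a one-line reference to Proposition~\ref{A11Sep23}.(1)$\Leftrightarrow$(6).

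For the ``in this case'' clause, the plan is to show that the existence of a strictly ascending chain of left ideals in $S^{-1}R$ forces the failure of the ascending chain condition on left ideals in both $S^{-1}R$ and $R$. For $S^{-1}R$ the conclusion is immediate: a strictly ascending infinite chain of left ideals is by definition an obstruction to being left Noetherian.

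For $R$ the idea is to pull the chain back along the localization map $\s_S: R\ra S^{-1}R$. Recall from (\ref{SCICI}) that  $\s_S^{-1}:\CI_l(S^{-1}R)\ra \CI_l(R)$ is well defined and satisfies  $S^{-1}\s_S^{-1}(J)=J$ for every left ideal $J$ of $S^{-1}R$. Applying this to the chain $\{\gb_i'\}$, one obtains an ascending chain of left ideals $\{\s_S^{-1}(\gb_i')\}$ of $R$. If two consecutive members $\s_S^{-1}(\gb_i')$ and $\s_S^{-1}(\gb_{i+1}')$ coincided, then applying $S^{-1}(-)$ and using the identity above would give $\gb_i'=\gb_{i+1}'$, contradicting the strict inclusion at level $i$. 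Hence the pulled-back chain is also strictly ascending, so $R$ is not left Noetherian either.

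I do not foresee any serious obstacle here; the only small point that requires a word of care is the passage from ``the chain does not stabilize'' (the literal negation of condition (6) of Proposition~\ref{A11Sep23}) to ``there is a strictly ascending chain'' as written in the corollary, which is handled by passing to a subsequence as noted in the first paragraph.
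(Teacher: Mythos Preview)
Your proof is correct and follows the same route as the paper, which simply says ``the corollary follows from Proposition~\ref{A11Sep23}.'' Your pullback argument via $\s_S^{-1}$ for the non-Noetherianity of $R$ is a clean way to spell out what the paper leaves implicit.

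One small refinement: the ``pass to a subsequence'' step is unnecessary and slightly misaligned with the statement, since the corollary asserts that the \emph{original} chain $\gb_0'\subset\gb_1'\subset\cdots$ (with $\gb_i'=\sum_{j=0}^i S^{-1}\gb\, s^{-j}$) is strictly increasing, not some reindexed subsequence. In fact the chain satisfies a dichotomy: if $\gb_n'=\gb_{n+1}'$ for some $n$, then $S^{-1}\gb\, s^{-(n+1)}\subseteq\gb_n'$, and multiplying on the right by $s^n$ gives $S^{-1}\gb\, s^{-1}\subseteq\sum_{k=0}^n S^{-1}\gb\, s^{k}\subseteq S^{-1}\gb$ (using that $\gb$ is a two-sided ideal), whence $\gb_i'=S^{-1}\gb$ for all $i$. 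So the chain is either constant or strictly increasing at every step, and the negation of condition~(6) yields the literal statement of the corollary without reindexing. This is exactly the computation hidden in the $(6)\Rightarrow(2)$ step of Proposition~\ref{A11Sep23}.
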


\begin{proof} The corollary follows from Proposition \ref{A11Sep23}.
\end{proof}

\begin{lemma}\label{a10Sep23}
If  $R$ is a prime ring and $S\in \Den_l(R, 0)$  then the ring 
 $S^{-1}R$ is a prime ring.
\end{lemma}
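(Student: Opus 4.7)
The plan is to verify primeness of $S^{-1}R$ by the equivalent formulation: any two nonzero ideals of $S^{-1}R$ have a nonzero product. The natural strategy is to contract such ideals to ideals of $R$ via the ring map $\s_S: R \ra S^{-1}R$, $r\mapsto \frac{r}{1}$, invoke primeness of $R$ there, and then push a witness element back into $S^{-1}R$.

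Concretely, let $I', J'$ be nonzero ideals of $S^{-1}R$, and set $I:=\s_S^{-1}(I')$ and $J:=\s_S^{-1}(J')$. By the displayed property following (\ref{SCICI-1}), these are (two-sided) ideals of $R$ satisfying $S^{-1}I = I'$ and $S^{-1}J = J'$. Since $S^{-1}(0)=0$, the nontriviality of $I'$ and $J'$ forces $I\neq 0$ and $J\neq 0$. Primeness of $R$ now yields an element $0\neq c\in IJ$; write $c=\sum_{i} a_ib_i$ with $a_i\in I$, $b_i\in J$. Since $a_i\in\s_S^{-1}(I')$ means exactly $\frac{a_i}{1}\in I'$ (and likewise for the $b_i$'s and $J'$), we obtain
$$\frac{c}{1} \;=\; \sum_i \frac{a_i}{1}\cdot\frac{b_i}{1} \;\in\; I'J'.$$

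The remaining point, which is the only place where the hypothesis $\ass_l(S)=0$ is used, is to check that $\frac{c}{1}\neq 0$ in $S^{-1}R$. But $\frac{c}{1}=0$ in a left localization at $S$ means $tc=0$ for some $t\in S$, i.e.\ $c\in \ass_l(S)=0$, contradicting $c\neq 0$. Thus $I'J'\neq 0$, proving that $S^{-1}R$ is prime. There is no real obstacle here once one has the contraction $\s_S^{-1}: \CI(S^{-1}R)\ra \CI(R)$ with the expansion identity $S^{-1}\s_S^{-1}(J)=J$ recalled in (\ref{SCICI-1}); the only subtlety worth flagging is that one must reach inside the ideal $IJ$ for a witness (rather than an arbitrary nonzero element of $I$ or $J$) so that the product structure is preserved after applying $\s_S$.
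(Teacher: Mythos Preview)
Your proof is correct and follows essentially the same approach as the paper's: both contract ideals of $S^{-1}R$ back to $R$ via $\s_S^{-1}$ (equivalently, intersect with $R$, since $\ass_l(S)=0$ makes $\s_S$ injective), invoke primeness of $R$, and then use the expansion identity $S^{-1}\s_S^{-1}(J)=J$ to return to $S^{-1}R$. The only cosmetic difference is that the paper argues the contrapositive ($\ga\gb=0 \Rightarrow \ga\cap R=0$ or $\gb\cap R=0 \Rightarrow \ga=0$ or $\gb=0$), whereas you argue the direct statement by exhibiting a nonzero element $\frac{c}{1}\in I'J'$.
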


\begin{proof} Since $S\in \Den_l(S, 0)$, the module  ${}_RR$ is an essential left $R$-submodule of $S^{-1}R$. 
 Suppose that $\ga\gb =0$ for some ideals $\ga $ and $\gb$ of $S^{-1}R$. Then $\ga'\gb' =0$  where $\ga' =\ga\cap R$ and $\gb'=\gb\cap R$ are ideals of $R$. The ring $R$ is a prime ring. Therefore, at least one of the ideals $\ga'$ or $\gb'$ is equal to zero, say $\ga'=0$. Then $\ga=0$, and so the ring $S^{-1}R$ is a prime ring.
\end{proof}

\begin{corollary}\label{a6Oct23}
Suppose that   $R$ is a  ring,  $S\in \Den_l(R,0)$ and $\gp \in \Spec (R)$. 
\begin{enumerate}

\item Suppose that $S^{-1}\gp \cap R=\gp$.  Then $S^{-1}\gp \in \Spec (S^{-1}R)$ iff  
 the left ideal $S^{-1}\gp$ of $S^{-1}R$ is an ideal. 
 
\item Suppose that $S^{-1}\gp \cap R\in \Spec (R)$.  Then $S^{-1}\gp \in \Spec (S^{-1}R)$ iff the left ideal $S^{-1}\gp$ of $S^{-1}R$ is an ideal.

\end{enumerate}
\end{corollary}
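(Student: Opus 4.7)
\begin{proof*}[Proof plan for Corollary \ref{a6Oct23}]
The forward direction in both parts is immediate, since every prime ideal is in particular a two-sided ideal. For the reverse direction, the plan is to establish part (2) directly and deduce part (1) as the special case in which $\sigma_S^{-1}(S^{-1}\gp)=\gp$. So assume $S^{-1}\gp$ is an ideal of $S^{-1}R$ and set
$$\gp' := \sigma_S^{-1}(S^{-1}\gp) = S^{-1}\gp\cap R,$$
which is a prime ideal of $R$ in both cases (equal to $\gp$ in (1), prime by hypothesis in (2)). The strategy is to exhibit $S^{-1}R/S^{-1}\gp$ as a localization of the prime ring $R/\gp'$ at a left denominator set with zero assassinator, and then invoke Lemma \ref{a10Sep23}.

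First I would verify the elementary identification $S^{-1}\gp = S^{-1}\gp'$. Since $\gp\subseteq\gp'$ the inclusion $\subseteq$ is clear; conversely, any element of $S^{-1}\gp'$ has the form $s^{-1}r$ with $r\in\gp'$, and by the definition of $\gp'$ we have $\frac{r}{1}\in S^{-1}\gp$, so $s^{-1}r\in S^{-1}\gp$. Thus the quotient $S^{-1}R/S^{-1}\gp$ is the same as $S^{-1}R/S^{-1}\gp'$.

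Next I would show that $\bS' := \pi_{\gp'}(S) \in \Den_l(R/\gp',0)$. The left Ore condition passes to the quotient routinely. For the assassinator: if $sr\in\gp'$ for some $s\in S$ and $r\in R$, then $s'sr\in \gp$ for some $s'\in S$, so $r\in \sigma_S^{-1}(S^{-1}\gp)=\gp'$, giving $\ass_l(\bS')=0$. The nontrivial denominator condition, namely that $rs\in\gp'$ implies $s''r\in\gp'$ for some $s''\in S$, is precisely the equivalence $(1)\Leftrightarrow(3)$ of Proposition \ref{A11Sep23} applied to the ideal $\gp'$, which is available because $S^{-1}\gp'=S^{-1}\gp$ is an ideal by hypothesis.

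Having these ingredients, I would construct the $R$-algebra isomorphism
$$S^{-1}R/S^{-1}\gp \;\cong\; \bS'^{-1}(R/\gp')$$
via the universal property of left Ore localization: the composition $R \xrightarrow{\sigma_S} S^{-1}R \twoheadrightarrow S^{-1}R/S^{-1}\gp$ kills $\gp'$ (by the definition of $\gp'$) and sends each $s\in S$ to a unit, hence factors through $\bS'^{-1}(R/\gp')$, and the inverse is built from $R/\gp' \hookrightarrow S^{-1}R/S^{-1}\gp$ together with inversion of $\bS'$. Since $R/\gp'$ is a prime ring, Lemma \ref{a10Sep23} gives that $\bS'^{-1}(R/\gp')$ is a prime ring; hence $S^{-1}R/S^{-1}\gp$ is a prime ring, i.e.\ $S^{-1}\gp\in\Spec(S^{-1}R)$. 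The main obstacle is the verification that $\bS'$ is a left denominator set of $R/\gp'$ rather than merely a left Ore set, and this is handled cleanly by Proposition \ref{A11Sep23}, which is exactly the tool that converts the hypothesis ``$S^{-1}\gp$ is an ideal'' into the needed right torsion condition.
\end{proof*}
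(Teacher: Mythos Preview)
Your proof is correct, but it follows a genuinely different route from the paper's. The paper proves part~1 first by a direct contraction argument: given ideals $\ga,\gb$ of $S^{-1}R$ with $\ga\gb\subseteq S^{-1}\gp$, it intersects with $R$ to get $(\ga\cap R)(\gb\cap R)\subseteq S^{-1}\gp\cap R=\gp$, uses primality of $\gp$ to conclude (say) $\ga\cap R\subseteq\gp$, and then recovers $\ga=S^{-1}(\ga\cap R)\subseteq S^{-1}\gp$ via the general identity $S^{-1}\sigma_S^{-1}(J)=J$. Part~2 is then reduced to part~1 by the same computation $S^{-1}\gp'=S^{-1}\gp$ that you carry out. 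So the paper never needs Proposition~\ref{A11Sep23} or the passage to the quotient ring; it essentially reruns the proof of Lemma~\ref{a10Sep23} relative to $S^{-1}\gp$.

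Your approach is more structural: you build the quotient $R/\gp'$, verify $\pi_{\gp'}(S)\in\Den_l(R/\gp',0)$ (where Proposition~\ref{A11Sep23} is genuinely needed to handle the right-denominator condition), and invoke Lemma~\ref{a10Sep23} as a black box. This buys you the explicit isomorphism $S^{-1}R/S^{-1}\gp\cong \pi_{\gp'}(S)^{-1}(R/\gp')$, which is of independent interest and parallels what the paper does later in Theorem~\ref{A10Sep23}.(3). The paper's argument is shorter and more self-contained for the bare statement; yours yields more information along the way.
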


\begin{proof} 1. $(\Rightarrow)$ The implication is obvious.

$(\Leftarrow)$ Suppose that  the left ideal $S^{-1}\gp$ of $S^{-1}R$ is an ideal.
By the assumption, $S^{-1}\gp \cap R=\gp$. Hence,   $S^{-1}\gp\neq S^{-1}R$. Since $S\in \Den_l(S, 0)$, the module  ${}_RR$ is an essential left $R$-submodule of $S^{-1}R$. 
 Suppose that $\ga\gb \subseteq S^{-1}\gp$ for some ideals $\ga $ and $\gb$ of $S^{-1}R$. Then $$\ga'\gb' \subseteq R\cap S^{-1}\gp =\gp$$  where $\ga' =\ga\cap R$ and $\gb'=\gb\cap R$ are ideals of $R$. Since $\gp \in \Spec (R)$,    at least one of the ideals $\ga'$ or $\gb'$ is contained in $\gp$, say $\ga'\subseteq \gp$. Then $S^{-1}\gp\supseteq S^{-1}\ga'=\ga$, and so  $S^{-1}\gp\in \Spec (S^{-1}R)$.

2. By the assumption $\gp':=S^{-1}\gp \cap R\in \Spec (R)$. Then 

\begin{eqnarray*}
S^{-1}\gp'\cap R&=&S^{-1}(S^{-1}\gp \cap R)\cap R=S^{-1}\gp \cap S^{-1}R\cap R=S^{-1}\gp \cap R=\gp',\\
S^{-1}\gp'&=&S^{-1}(S^{-1}\gp \cap R)=S^{-1}\gp \cap S^{-1}R=S^{-1}\gp .\\
\end{eqnarray*}
By statement 1, $S^{-1}\gp'=S^{-1}\gp \in \Spec (S^{-1}R)$ iff the left ideal $S^{-1}\gp'=S^{-1}\gp$ of $S^{-1}R$ is an ideal.
\end{proof}

For a prime ideal $\gp$ and a left denominator set $S$, Proposition \ref{Aa6Oct23} provides  sufficient conditions for  $\s_S^{-1}(S^{-1}\gp )=\gp$. 

\begin{proposition}\label{Aa6Oct23}
Suppose that   $R$ is a  ring,  $S\in \Den_l(R,\ga)$,  $\gp \in \Spec (R)$, $S\cap \gp = \emptyset$  and the ideal $\s_S^{-1}(S^{-1}\gp )$ of $R$ is a finitely generated right $R$-module. Then  $\s_S^{-1}(S^{-1}\gp )=\gp$. 
\end{proposition}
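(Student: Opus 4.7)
The plan is to denote $I := \s_S^{-1}(S^{-1}\gp)$ and prove the two inclusions $\gp \subseteq I$ and $I \subseteq \gp$. The first inclusion is automatic: for $p \in \gp$, the element $\s_S(p) = p/1$ lies in $S^{-1}\gp$. The whole content is the reverse inclusion, for which I would first give a concrete element-level description of $I$.

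Namely, I claim that $I = \{r \in R \mid \exists s \in S,\; sr \in \gp\}$. To see this, observe that $S^{-1}\gp = \{s^{-1}p \mid s \in S,\; p \in \gp\}$ (closed under sums via a common left denominator using the Ore condition). Hence $r \in I$ iff there exist $s \in S$ and $p \in \gp$ with $r/1 = p/s$ in $S^{-1}R$. Unwinding the equivalence of fractions (find $v \in R$ with $vs \in S$ and $vsr = vp$) and setting $s' := vs \in S$, one gets $s'r = vp \in \gp$ since $\gp$ is a left ideal. Conversely, if $s'r \in \gp$ for some $s' \in S$, then $r/1 = s'r/s' \in S^{-1}\gp$.

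Now the hypothesis that $I$ is finitely generated as a right $R$-module enters. Write $I = x_1R + \cdots + x_nR$, and for each $i$ pick $s_i \in S$ with $s_i x_i \in \gp$. The plan is to replace the $s_i$ by a single element of $S$. Since $S$ is a left Ore set, any two elements of $S$ admit a common left multiple inside $S$ (apply the left Ore condition to $s_1 \in R$ and $s_2 \in S$ to get $s's_1 = r's_2$ with $s' \in S$, so $s's_1 \in S$ is a common left multiple). Iterating, I find $s \in S$ together with $u_1, \dots, u_n \in R$ such that $s = u_i s_i$ for every $i$. Then $sx_i = u_i s_i x_i \in \gp$ for each $i$, and therefore
\[
sI \;=\; \sum_{i=1}^n sx_i R \;\subseteq\; \gp R \;\subseteq\; \gp.
\]

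The last step uses the primeness of $\gp$ together with $S \cap \gp = \emptyset$. For any $x \in I$, since $I$ is a left ideal one has $Rx \subseteq I$, hence $sRx \subseteq sI \subseteq \gp$. Because $\gp$ is prime in the noncommutative sense, $sRx \subseteq \gp$ forces $s \in \gp$ or $x \in \gp$. The hypothesis $S \cap \gp = \emptyset$ rules out $s \in \gp$, so $x \in \gp$. As $x \in I$ was arbitrary, $I \subseteq \gp$, completing the proof. The main obstacle I anticipate is purely bookkeeping in the fraction-equivalence step at the start; everything afterward is a clean application of left Ore, finite generation on the right, and the two-sided definition of a prime ideal, and none of those steps is likely to require additional hypotheses beyond those stated.
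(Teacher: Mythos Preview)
Your proof is correct and follows essentially the same approach as the paper's: both establish the description $I = \{r \in R \mid sr \in \gp \text{ for some } s \in S\}$, use finite right generation together with the left Ore condition to find a single $s \in S$ with $sI \subseteq \gp$, and then invoke primeness of $\gp$ together with $s \notin \gp$ to conclude $I \subseteq \gp$. The only cosmetic difference is that the paper phrases the last step via the ideal inclusion $(s)I \subseteq \gp$, whereas you use the elementwise form $sRx \subseteq \gp$; you are also more explicit than the paper about how the common left multiple $s$ is obtained.
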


\begin{proof} Since $\gp$ is an ideal of $R$, the left ideal of $R$, $$\gq:=\s_S^{-1}(S^{-1}\gp )=\{ r\in R\, | \, sr\in \gp\;\; {\rm  for \; some}\;\;s\in S\},$$ is an ideal. 
 Clearly, $\gp \subseteq \gq$ and for each element  $q\in \gq$ there is an element $s\in S$ such that $sq=0$. By the assumption,  the right $R$-module $\gq$ is finitely generated. Let $q_1, \ldots , q_m$ be a generating set.  Then we can fix an element $s\in S$ such that $sg_i\in \gp$ for all $i=1, \ldots , m$. Hence, $\gp\supseteq s\gq$, and so 
$$\gp\supseteq (s) \gq,$$
 and $\gq\subseteq \gp$ since $s\not\in \gp$ (since $\gp \cap S=\emptyset$). Therefore, $\gp = \gq$.
\end{proof}

Proposition \ref{Xa10Sep23} is a generalization of Lemma \ref{a10Sep23}. 

\begin{proposition}\label{Xa10Sep23}
If  $R$ is a ring and $S\in \Den_l(R, \gq)$ such that  $\gq \in \Spec (R)$. Then 
 $S^{-1}R$ is a prime ring.
\end{proposition}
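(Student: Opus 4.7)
The plan is to reduce the statement to Lemma \ref{a10Sep23} (the case $\gq = 0$) by passing to the factor ring $\bR := R/\gq$. First I would set $\pi_\gq : R \to \bR$, $r \mapsto \br = r + \gq$, and recall the standard fact (used repeatedly in the paper, cf.\ the setup before Proposition \ref{A11Sep23}) that whenever $S \in \Den_l(R, \gq)$ the image $\bS := \pi_\gq(S)$ belongs to $\Den_l(\bR, 0)$ and there is a natural $R$-algebra isomorphism
\begin{equation*}
S^{-1}R \;\simeq\; \bS^{-1}\bR.
\end{equation*}

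Next I would invoke the hypothesis $\gq \in \Spec(R)$, which is exactly the statement that $\bR = R/\gq$ is a prime ring. Since $\bS \in \Den_l(\bR, 0)$ and $\bR$ is prime, Lemma \ref{a10Sep23} applies to the pair $(\bR, \bS)$ and yields that $\bS^{-1}\bR$ is a prime ring. Combining this with the isomorphism above gives that $S^{-1}R$ is prime, as required.

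There is no real obstacle here: the only minor point to verify is that $\bS \in \Den_l(\bR, 0)$, which is immediate from the definition of $\ass_l(S) = \gq$ and the left Ore/denominator conditions passing through $\pi_\gq$. Lemma \ref{a10Sep23} then does all the work. The proof is therefore essentially a two-line reduction to the previously established prime case.
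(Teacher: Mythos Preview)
Your argument is correct. You reduce to Lemma \ref{a10Sep23} by passing to $\bR = R/\gq$: since $S\in\Den_l(R,\gq)$ one has $\bS\in\Den_l(\bR,0)$ and $S^{-1}R\simeq\bS^{-1}\bR$, and since $\gq\in\Spec(R)$ the ring $\bR$ is prime, so Lemma \ref{a10Sep23} finishes.

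The paper takes a different, direct route that does not invoke Lemma \ref{a10Sep23} or the isomorphism $S^{-1}R\simeq\bS^{-1}\bR$. It argues at the level of $S^{-1}R$ itself: if $\ga\gb=0$ for ideals $\ga,\gb$ of $S^{-1}R$, then the contractions $\ga'=\s_S^{-1}(\ga)$ and $\gb'=\s_S^{-1}(\gb)$ are ideals of $R$ with $\ga'\gb'\subseteq\gq$; primeness of $\gq$ forces, say, $\ga'\subseteq\gq$, whence $\ga=S^{-1}\ga'\subseteq S^{-1}\gq=0$. Your approach is more modular and makes the proposition a formal corollary of the $\gq=0$ case, which is conceptually pleasing; the paper's approach is self-contained and avoids appealing to the quotient description of the localization, working instead with the contraction/extension identity $S^{-1}\s_S^{-1}(J)=J$ already recorded in (\ref{SCICI}). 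Either way the content is the same.
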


\begin{proof} Suppose that $\ga\gb =0$ for some ideals $\ga $ and $\gb$ of $S^{-1}R$. Then $\ga'\gb' \subseteq\gq$  where $\ga' =\s_S^{-1}(\ga )$ and $\gb'=\s_S^{-1}(\gb )$ are ideals of $R$. Since $\gq \in \Spec (R)$, at least one of the ideals $\ga'$ or $\gb'$ is a subset of $\gq$, say $\ga'\subseteq \gq$. Then $\ga=S^{-1}\s_S^{-1}(\ga) \subseteq S^{-1}\gq=0$, and so the ring $S^{-1}R$ is a prime ring.
\end{proof}

Lemma \ref{b14Oct23}  is a criterion for an epimorphic image of a left denominator set being a left denominator set.

\begin{lemma}\label{b14Oct23}
Suppose that   $R$ is a  ring,  $S\in \Den_l(R,\ga )$ and $\gb$ be an ideal of $R$ such that $\ga \subseteq \gb$. Let $\pi_\gb : R\ra R/\gb$, $r\mapsto \br = r+\gb$ and $\bS=\pi_\gb (S)$. 
 Then $\bS\in \Den_l(R/\gb ,0)$ iff $\bS\subseteq\CC_{R/\gb}$.
\end{lemma}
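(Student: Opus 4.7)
The plan is to prove both implications of the biconditional directly from the definitions, using that the projection $\pi_\gb$ preserves multiplicative structure and that $\ga \subseteq \gb$ guarantees nothing conflicting  happens with the assassin.

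For the forward direction, I will unpack what $\bS \in \Den_l(R/\gb, 0)$ says. First, $\ass_l(\bS) = 0$ means exactly that every $\bs \in \bS$ is left regular in $R/\gb$: if $\bs \br = 0$ then $\br = 0$. Second, the left denominator axiom requires the right-cancellation property: if $\br \bs = 0$ with $\bs \in \bS$, $\br \in R/\gb$, then $\bt \br = 0$ for some $\bt \in \bS$. Combined with $\ass_l(\bS) = 0$, this forces $\br = 0$, so every $\bs \in \bS$ is right regular as well. Hence $\bS \subseteq \CC_{R/\gb}$.

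For the backward direction, I will verify the four requirements of $\Den_l(R/\gb , 0)$ in turn. (a) Multiplicative closure: since $S$ is multiplicatively closed with $1 \in S$, so is $\bS = \pi_\gb(S)$; moreover $\bar 0 \notin \bS$ because $\bar 0$ is not regular, so $S \cap \gb = \emptyset$ automatically. (b) Left Ore condition in $R/\gb$: given $\br \in R/\gb$ and $\bs \in \bS$, lift to $r \in R$ and $s \in S$, apply the left Ore condition for $S \in \Den_l(R, \ga)$ to obtain $s_1 \in S$, $r_1 \in R$ with $s_1 r = r_1 s$, and project to get $\bs_1 \br = \br_1 \bs \in \bS \br \cap (R/\gb) \bs$. (c) Right cancellation: if $\br \bs = 0$ with $\bs \in \bS \subseteq \CC_{R/\gb}$, then right regularity of $\bs$ gives $\br = 0$, so $\bar 1 \cdot \br = 0$ works. (d) $\ass_l(\bS) = 0$: if $\bs \br = 0$ for some $\bs \in \bS$, left regularity of $\bs$ forces $\br = 0$.

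There is no real obstacle to overcome: the hypothesis $\bS \subseteq \CC_{R/\gb}$ is strong enough that the cancellation axioms collapse to triviality ($\bt$ may be taken to be $\bar 1$), and the left Ore condition on $\bS$ is a routine lift-and-project argument using the corresponding property of $S$. The only mildly subtle point is ensuring $\bar 0 \notin \bS$, which I will note is equivalent to $S \cap \gb = \emptyset$ and is automatic from the regularity hypothesis.
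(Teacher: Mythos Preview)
Your proposal is correct and follows essentially the same approach as the paper's proof. The paper is simply far terser: it calls the forward implication ``obvious'' and for the backward implication notes only that $\bS$ inherits the left Ore property from $S$ (as the epimorphic image of a left Ore set not containing $0$) and, consisting of regular elements, is automatically a left denominator set with zero assassin --- which is exactly what your steps (a)--(d) spell out.
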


\begin{proof} $(\Rightarrow)$ The implication is obvious.

$(\Leftarrow)$ The inclusion $\bS\subseteq\CC_{R/\gb}$ implies that  $\bS\in \Ore_l(R/\gb)$ is a left Ore set of the ring $\gb$ that consists of regular elements of $\bR$, i.e. $\bS\in \Den_l(R/\gb ,0)$.
\end{proof}

Lemma \ref{c14Oct23} is  generalization of Lemma \ref{b14Oct23}.

\begin{lemma}\label{c14Oct23}
Suppose that   $R$ is a  ring,  $S\in \Den_l(R,\ga )$, $\gb$ be an ideal of $R$ such that $S\cap (\ga + \gb)=\emptyset$. Let $\pi_{\ga+\gb} : R\ra \bR:=R/(\ga+\gb)$, $r\mapsto \br = r+\ga+\gb$,  $\bS=\pi_{\ga+\gb} (S)$, and  $\overline{\gc}:=\ass_l(\bS)$. Let 
$\pi_{\overline{\gc}} : \bR\ra \bR/\overline{\gc}$, $r\mapsto \tilde{r} = \br+\overline{\gc}$ and $\tilde{S}=\pi_{\overline{\gc}} (\bS)$.  Then $\tilde{S}\in \Den_l(\bR/\overline{\gc} ,0)$ iff $\tor_{r,S}(\bR/\overline{\gc})=0$ (i.e. if $\br s\in \overline{\gc}$ for some $\br\in R$ and $s\in S$ then $\br\in\overline{\gc}$). 
\end{lemma}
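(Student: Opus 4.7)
The plan is to identify the double quotient $\bR/\overline{\gc}$ with a single quotient $R/\gd$ for a suitable ideal $\gd\supseteq\ga$, and then invoke Lemma~\ref{b14Oct23}. Explicitly, set
$$\gd\;:=\;\pi_{\ga+\gb}^{-1}(\overline{\gc})\;=\;\{r\in R\mid sr\in\ga+\gb\text{ for some }s\in S\},$$
so that the canonical identification $\bR/\overline{\gc}\cong R/\gd$ sends $\tilde{S}$ onto $\pi_\gd(S)$. First I would verify two preliminary points: that $\ga\subseteq\gd$ (if $r\in\ga=\ass_l(S)$ then $sr=0\in\ga+\gb$ for some $s\in S$), and that $S\cap\gd=\emptyset$ (if $s_0\in S\cap\gd$, pick $s\in S$ with $ss_0\in\ga+\gb$; either $ss_0\neq 0\in S\cap(\ga+\gb)$, contradicting the hypothesis, or $ss_0=0$, forcing $s_0\in\ass_l(S)=\ga\subseteq\ga+\gb$, the same contradiction).

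Lemma~\ref{b14Oct23} applied with the ideal $\gd$ in place of $\gb$ then reduces the claim to showing that $\pi_\gd(S)\subseteq\CC_{R/\gd}$ if and only if the stated torsion condition holds. The key observation is that the very definition $\overline{\gc}=\ass_l(\bS)$ forces $\tilde{S}$ to be left regular automatically: if $\tilde{s}\tilde{r}=0$ in $\bR/\overline{\gc}$, then $\bs\br\in\overline{\gc}$, so some $\bs'\in\bS$ annihilates $\bs\br$; by multiplicative closure of $\bS$ the product $\bs'\bs\in\bS$ satisfies $(\bs'\bs)\br=0$, whence $\br\in\ass_l(\bS)=\overline{\gc}$ and $\tilde{r}=0$. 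Consequently, the only remaining obstruction to $\tilde{S}\subseteq\CC_{\bR/\overline{\gc}}$ is the right regularity of $\tilde{S}$, and the implication $\tilde{r}\tilde{s}=0\Rightarrow\tilde{r}=0$ for $\tilde{s}\in\tilde{S}$ is precisely a reformulation of $\tor_{r,S}(\bR/\overline{\gc})=0$.

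The argument is essentially bookkeeping across two successive quotients; the only non-routine ingredient is the automatic left regularity of $\tilde{S}$ after quotienting by $\overline{\gc}$. This step relies crucially on the multiplicative closure of $\bS$ to telescope two successive annihilation relations into one, and is in my view the real content of the lemma. Once it is in place, Lemma~\ref{b14Oct23} delivers the equivalence with no further work.
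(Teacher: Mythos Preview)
Your proof is correct and has the same content as the paper's. The paper's own argument is a two-line sketch: from $S\cap(\ga+\gb)=\emptyset$ one gets $\bS\in\Ore_l(\bR,\overline{\gc})$, hence $\tilde{S}\in\Ore_l(\bR/\overline{\gc},0)$, and then the left-denominator condition for $\tilde{S}$ reduces immediately to the right-torsion condition. Your version unpacks exactly this, routing through the identification $\bR/\overline{\gc}\cong R/\gd$ and Lemma~\ref{b14Oct23}; the ``automatic left regularity'' step you single out is precisely what the paper compresses into the assertion $\tilde{S}\in\Ore_l(\bR/\overline{\gc},0)$. One small simplification: in checking $S\cap\gd=\emptyset$, the case split on whether $ss_0=0$ is unnecessary, since $ss_0\in S$ by multiplicative closure and $0\notin S$, so $ss_0\in S\cap(\ga+\gb)$ directly.
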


\begin{proof} The condition that $S\cap (\ga + \gb)=\emptyset$ implies that $\bS\in \Ore_l(\bR, \overline{\gc})$. Then $\tilde{S}\in \Ore_l(\bR/\overline{\gc}, 0)$. Now, the lemma is obvious.
\end{proof}

{\bf The set of minimal primes of the ring $S^{-1}R$  where $S\in \Den_l(R)$.} For an ideal $\ga$ of  a ring $R$, let $\min (\ga )$ be the set of  minimal prime ideals over $\ga$. So, the map 
\begin{equation}\label{minga}
\min (\ga)\ra \min (R/ \ga), \;\; \gp \mapsto \gp/\ga
\end{equation}
is a bijection where $\min (R/\ga)$ is the set of minimal  prime ideals of the factor ring $R/\ga$. For a ring $R$,  the ideal  $$\gn_R:=\bigcap_{\gp\in \Spec (R)}\gp=\bigcap_{\gp\in \min (R)}\gp$$  is called the  {\bf prime radical} of $R$.  If $\gn_R=0$, the ring $R$ is called a {\bf semiprime ring}. The ring $R$ is a semiprime ring iff the zero ideal is the only nilpotent ideal of $R$. 
 Then  the map 
\begin{equation}\label{minga1}
\min (R)\ra \min (R/ \gn_R), \;\; \gp \mapsto \gp/\gn_R
\end{equation}
is a bijection. So, in dealing with minimal primes of a ring without less of generality we may assume that the ring is a {\em semiprime}  ring.  For $S\in \Den_l(R)$, let 
$$\min (R, S):=\{ \gp\in \min (R)\, | \, \gp\cap S=\emptyset\}\;\; {\rm  and} \;\;\min (R, S, \id):=\{ \gp\in \min (R)\, | \, S^{-1}\gp S^{-1}R\neq S^{-1}R\}.$$ 
 Since $S^{-1}\gp \supseteq S^{-1}\gp S^{-1}R$,  $$\min (R, S)\subseteq \min (R, S, \id).$$
 
 \begin{definition}
A ring is called a {\bf prime rich ring} if each  ideal contains a finite product of prime ideals that contain it. 
\end{definition}

\begin{example} 
Every one-sided Noetherian ring $R$ is prime rich (since $|\min (R)|<\infty$ and the prime radical $\gn_R$ is a nilpotent ideal). 
\end{example}

\begin{proposition}\label{A29Sep23}
\begin{enumerate}

\item Suppose that  $R$ is  a  ring  such that $\gp_1\cdots \gp_n=0$ for some prime ideals $\gp_i$ of $R$. Then $\min (R)\subseteq \min \{ \gp_1, \ldots , \gp_n\}$ and $|\min (R)|\leq n<\infty$.

\item Every ideal of a prime rich ring has only finitely many minimal primes.
\end{enumerate}

\end{proposition}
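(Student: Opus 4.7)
\begin{proof*}[Proof proposal]
For statement 1, the plan is to use the standard prime-avoidance-style argument. Let $\gq \in \min(R)$. Since $\gp_1\cdots \gp_n = 0 \subseteq \gq$ and $\gq$ is prime, an immediate induction on $n$ shows that $\gp_i \subseteq \gq$ for some $i \in \{1,\ldots,n\}$. Because $\gq$ is a \emph{minimal} prime of $R$ and $\gp_i$ is a prime contained in $\gq$, we must have $\gp_i = \gq$. Hence every element of $\min(R)$ coincides with some $\gp_i$, and so $\min(R) \subseteq \{\gp_1,\ldots,\gp_n\}$, yielding $|\min(R)| \leq n$. To upgrade this to $\min(R) \subseteq \min\{\gp_1,\ldots,\gp_n\}$, I will observe that if $\gq = \gp_i \in \min(R)$ and some $\gp_j$ satisfies $\gp_j \subseteq \gp_i$, then $\gp_j$ is a prime contained in the minimal prime $\gq$, forcing $\gp_j = \gp_i$; thus $\gp_i$ is minimal within $\{\gp_1,\ldots,\gp_n\}$.

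For statement 2, the plan is to reduce to statement 1 by passing to the factor ring. Let $\ga$ be an ideal of the prime rich ring $R$. By the definition of prime rich, there exist prime ideals $\gp_1,\ldots,\gp_n$ of $R$, each containing $\ga$, with $\gp_1\cdots \gp_n \subseteq \ga$. Writing $\pi_\ga \colon R \to R/\ga$ and $\bar{\gp}_i := \pi_\ga(\gp_i) = \gp_i/\ga$, each $\bar{\gp}_i$ is a prime ideal of $R/\ga$ and $\bar{\gp}_1\cdots \bar{\gp}_n = 0$ in $R/\ga$. Applying statement 1 to the ring $R/\ga$ gives $|\min(R/\ga)| \leq n < \infty$. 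Via the bijection (\ref{minga}), $\min(\ga)$ is in bijection with $\min(R/\ga)$, so $|\min(\ga)| \leq n < \infty$, as required.

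The routine point to watch is the inductive step in statement 1 (the standard fact that a prime containing a finite product of ideals contains one of the factors), which uses only primeness and not any minimality. The genuine content is the minimality step $\gp_i = \gq$; here I am using that $\min(R)$ consists of primes minimal among \emph{all} primes of $R$ (so no prime can sit strictly below $\gq$), which is exactly what pins each minimal prime onto the given finite list. There is no substantial obstacle; the argument is a short prime-avoidance plus a quotient reduction.
\end{proof*}
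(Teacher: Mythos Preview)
Your proof is correct and follows essentially the same approach as the paper's: for statement 1 you use the standard fact that a prime containing a product of ideals contains a factor, then invoke minimality of $\gq$, and for statement 2 you reduce to statement 1 via the quotient $R/\ga$ and the bijection (\ref{minga}). The only difference is that you spell out the ``upgrade'' to $\min(R)\subseteq\min\{\gp_1,\ldots,\gp_n\}$ explicitly, whereas the paper leaves this implicit with ``the result follows.''
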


\begin{proof} 1. Let $\gq\in \min (R)$. Then the inclusion $\gp_1\cdots \gp_n=\{ 0\}\subseteq \gq$ implies the inclusion $\gp_i\subseteq \gq$ for some $i$, and so $\gp_i=\gq$, by the minimality of $\gq$, and the result follows.

2. Statement 2 follows from statement 1. In more detail, if $\ga$ is an ideal of $R$ then $\gq_1\cdots \gq_m\subseteq \ga$ for some prime ideals $\gq_i$ of $R$ that contains $\ga$.   Let $R\ra \bR=R/\ga$, $r\mapsto \br =r+\ga$. Then $$\overline{\gq}_1\ldots \overline{\gq}_m=0\;\; {\rm  and }\;\;\{ \overline{\gq}_1,\ldots ,\overline{\gq}_m\}\subseteq \Spec (\bR).$$ By statement 1, $\min (\overline{\ga}) = \min \{\overline{\gq}_1,\ldots ,\overline{\gq}_m \} $, and so
$\min (\ga) = \min \{\gq_1,\ldots ,\gq_m \} $ and statement 2 follows.
\end{proof}

\begin{proof}{\bf(Proof of Proposition \ref{aA29Sep23})}    $(1\Rightarrow 3)$  Suppose that the ring $R$ is a prime rich ring. Then each  ideal $\ga$ of $R$ contains a product of prime ideals,  say $\gp_1\cdots \gp_n$. By  Proposition \ref{A29Sep23}.(1),
 $$\min (\ga)\subseteq \{\gp_1, \ldots , \gp_n\}.$$ 
Recall that the map $\min (\ga)\ra \min (R/\ga)$, $\gp \mapsto\gp/\ga$ is a bijection. 
Now, it follows from the inclusions 
$$ \ga \supseteq \gp_1\cdots \gp_n\supseteq \bigg(\bigcap_{\gp\in \min (\ga)}\gp\bigg)^n$$
that  the prime radical $\gn_{R/\ga}$ of the factor ring $R/\ga$ is a nilpotent ideal.

$(3\Rightarrow 2)$ Let $\ga$ be an ideal of $R$. 
 By the assumption, $|\min (\ga)|<\infty$ and the prime radical $\gn_{R/\ga}$ of the factor ring $R/\ga$ is a nilpotent ideal. Hence,
 $$\ga\supseteq \bigg(\bigcap_{\gp\in \min (\ga)}\gp\bigg)^n\supseteq  \bigg(\prod_{\gp\in \min (\ga)}\gp\bigg)^n,$$
 as required. 
 
$(2\Rightarrow 1)$ Straightforward.
\end{proof}

\begin{definition} A left denominator set $S$ of a ring $R$ is called {\bf an ideal preserving} left denominator set (or a left denominator set that  {\bf respects the ideal structure} of $R$)  if the map $S^{-1}: \CI (R)\ra \CI(S^{-1}R), \;\; I\mapsto S^{-1}I$ is a well-defined map, i.e.  for each  ideal $I$ of $R$, $S^{-1}I$ is an ideal of the ring $S^{-1}R$.  The set of all ideal preserving left denominator sets is denoted by $\Denlip (R)$. A ring $R$ is called an  {\bf ipl-ring}  (an {\bf ideal preserving ring under left localizations}) if $\Denlip (R) = \Den_l(R)$, i.e.  every left denominator set of $R$ is ideal preserving.
\end{definition}

\begin{definition} We say that a left denominator set $S$ of a ring $R$ {\bf respects the ideal structure of primes} of $R$ if for each prime ideal $\gp\in \Spec (R)$ the left ideal  $S^{-1}\gp$ of the ring $S^{-1}R$ is an ideal. 
\end{definition} 
If the left denominator set respects the ideal structure of a ring then it also the respects the ideal structure of primes. By Corollary \ref{a6Oct23},   if a left denominator set $S$ of a ring $R$ respects the ideal structure of primes then it also respects primeness of those prime ideals of $R$ that satisfy the assumption  of Corollary \ref{a6Oct23}.

\begin{example} 
Every denominator set that belong to the centre of a ring respects the ideal structure. All denominator sets of a commutative ring respect the ideal structure of the ring.
\end{example}

\begin{proposition}\label{B29Sep23}
Let $R$ be a ring and $S\in \Den_l(S,\ga)$. Suppose that the ring $S^{-1}R$ is a left Noetherian ring (e.g. $R$ is a left Noetherian ring) then $S$ respects the ideal structure of $R$. 
\end{proposition}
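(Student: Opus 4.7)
The plan is to apply Proposition \ref{A11Sep23}, specifically the equivalence between conditions (1) and (6), which reduces the ``$S^{-1}\gb$ is an ideal'' problem to the stabilization of a certain ascending chain of left ideals of $S^{-1}R$. Fix an arbitrary ideal $\gb$ of $R$. We must show that the left ideal $S^{-1}\gb$ of $S^{-1}R$ is two-sided. By Proposition \ref{A11Sep23}, it suffices to verify that for each $s\in S$, the ascending chain of left ideals
\[
\gb_0'\subseteq \gb_1'\subseteq \cdots \subseteq \gb_i'\subseteq \cdots, \qquad \gb_i'=\sum_{j=0}^i S^{-1}\gb\, s^{-j},
\]
of $S^{-1}R$ stabilizes. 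But this is automatic since, by hypothesis, $S^{-1}R$ is a left Noetherian ring.

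For the parenthetical ``e.g.\ $R$ is a left Noetherian ring'', I would observe that left Noetherianness transfers from $R$ to $S^{-1}R$: given any ascending chain $J_1\subseteq J_2\subseteq \cdots$ of left ideals of $S^{-1}R$, applying the map $\s_S^{-1}$ from (\ref{SCICI}) yields an ascending chain $\s_S^{-1}(J_1)\subseteq \s_S^{-1}(J_2)\subseteq \cdots$ of left ideals of $R$ which must stabilize at some $n$, and since $S^{-1}\s_S^{-1}(J_k)=J_k$ for each $k$, the original chain in $S^{-1}R$ stabilizes at the same index.

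There is essentially no obstacle here: the substantive content sits inside Proposition \ref{A11Sep23}, and the present proposition is its immediate corollary once one recognizes the Noetherian hypothesis as forcing condition (6). The only minor care required is to apply the correct equivalence from the list, since condition (6) is precisely tailored for this kind of chain-condition argument.
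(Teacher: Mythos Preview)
Your proof is correct and follows essentially the same approach as the paper: both use the ascending chain $\gb_i'=\sum_{j=0}^i S^{-1}\gb\, s^{-j}$ and the left Noetherian hypothesis on $S^{-1}R$ to force stabilization. The only cosmetic difference is that the paper does not cite Proposition~\ref{A11Sep23} but instead re-derives the implication (6)$\Rightarrow$(2) in place (from $\gb_n'=\gb_{n+1}'$ it deduces $S^{-1}\gb\, s^{-1}\subseteq \gb_n' s^n\subseteq S^{-1}\gb$), whereas you invoke the ready-made equivalence (1)$\Leftrightarrow$(6); your packaging is slightly cleaner.
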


\begin{proof} Let $\gb$ be an ideal of the ring $R$ and $ s\in S$. For each natural number  for $i\geq 0$, let $\gb_i=\sum_{j=0}^iS^{-1}\gb s^{-j}$. By the assumption, the ring $S^{-1}R$ is a left Noetherian ring. So, the ascending chain of left ideals of the ring $S^{-1}R$,
$$ \gb =\gb_0\subseteq \gb_1\subseteq \cdots$$
stabilizes, say on $n$'th step. Then 
$S^{-1}\gb s^{-n-1}\subseteq \gb_n$, and so 
$$ S^{-1}\gb s^{-1}\subseteq \gb_ns^n=\sum_{j=0}^nS^{-1}\gb s^j\subseteq S^{-1}\gb,$$
i.e. the left ideal $S^{-1}\gb$ of the ring $S^{-1}R$ is an ideal. 
\end{proof}

 





\begin{proof} {\bf (Proof of Proposition \ref{29Sep23})} 1. (i) $|\min (R|<\infty$:  The ring $R$ is prime rich. By Proposition \ref{A29Sep23}.(2), $|\min (R|<\infty$.

(ii) $\min (S^{-1}R)\subseteq \min \{ S^{-1}\gp \, | \, \gp \in \min (R,S)\}$:  Let $\gq\in \min (S^{-1}R)$.  Then the ideal $\s_S^{-1}(\gq)$ of the prime rich ring $R$ 
  contains a finite product of prime ideals of  $R$, say $\gp_1\cdots \gp_n$, that contain $\s_S^{-1}(\gq)$.  The left denominator set  $S\in \Den (R, \ga)$ respects the ideal structure of primes. So,
$$S^{-1}\gp_1\cdots S^{-1}\gp_n=S^{-1}\gp_1\gp_2\cdots \gp_n\subseteq S^{-1}\s_S^{-1}(\gq)=\gq.$$
Hence, $S^{-1}\gp_i\subseteq \gq$ for some $i$ since  $\gq\in \Spec (S^{-1}R)$, and so $ \gp_i\subseteq \s_S^{-1}(S^{-1}\gp_i)\subseteq \s_S^{-1}(\gq)$. There exists a minimal prime ideal $\gp \in \min (R)$ such that $\gp\subseteq \gp_i$. Since $S^{-1}\gp\subseteq S^{-1}\gp_i\subseteq \gq$, we have that $\gp \in \min (R,S)$. Then, by the assumption, $S^{-1}\gp\in \Spec (S^{-1}R)$.
 Hence,
 $$S^{-1}\gp\subseteq  S^{-1}\s_S^{-1}(\gq)=\gq,$$
  and so $\gq=S^{-1}\gp$, by the minimality of $\gq$,  and the statement (ii) follows.
  
(iii) $\min (S^{-1}R)=\min \{ S^{-1}\gp \, | \, \gp \in \min (R,S)\}$: The statement (iii) follows from the statement (ii). 
  
(iv) $1\leq |\min (R,S)|\leq |\min (R)|<\infty$:
 The set of minimal primes of an arbitrary ring is a non-empty set. Now, the statement (iv) follow from the statements (i) and (iii).

2. Statement 2 follows from statement 1.

3. (i) {\em For each} $\gp \in \min (R,S)$, $\s_S^{-1}(S^{-1}\gp )=\gp$: The statement (i) follows from Proposition \ref{Aa6Oct23}


(ii) $\min (S^{-1}R)= \{ S^{-1}\gp \, | \, \gp \in \min (R,S)\}$: In view of statement 2, we have to show that the ideals $S^{-1}\gp$ and $S^{-1}\gp'$ are incomparable for all distinct prime ideals $\gp,\gp'\in \min (R,S)$. Suppose that  $ S^{-1}\gp\subset S^{-1}\gp'$,  a strict inclusion. Then, by the statement (i),  
$$ \gp= \s_S^{-1}( S^{-1}\gp)\subseteq 
\s_S^{-1}( S^{-1}\gp')= \gp'.$$
Hence, $\gp=\gp'$ (since $\gp,\gp'\in \min (R)$),   a contradiction.
\end{proof}

\begin{proof} {\bf (Proof of Corollary \ref{a29Sep23})}   (i) {\em The ring $R$ is a prime rich ring}: The prime radical of an one-sided Noetherian ring is a nilpotent ideal. For each ideal $\ga$ of $R$, the factor ring $R/\ga$ is also one-sided Noetherian. Hence, there is a natural number $n$ such that  
$$\ga\supseteq \bigg(\bigcap_{\gp \in \min (\ga)}\gp \bigg)^n\supseteq \bigg(\prod_{\gp \in \min (\ga)}\gp \bigg)^n.$$

(ii) {\em $S$ respects the ideal structure of the ring $R$}: Since the ring $S^{-1}R$ is a left Noetherian, the statement (ii) follows from Proposition \ref{B29Sep23}.

(iii) {\em For each} $\gp \in \min (R,S)$, $\s_S^{-1}(S^{-1}\gp )=\gp$: Since the ring $R$ is a right Noetherian ring, the statement (iii) follows from Proposition \ref{Aa6Oct23}.

(iv) {\em $S^{-1}\gp\in \Spec (S^{-1}R)$ for all} $\gp\in \min (R, S)$: The statement (iv), follows from Corollary \ref{a6Oct23}.(1) and 
 the statements (ii) and  (iii). 

Now, the corollary follows from Proposition statements (i), (iii) and (iv),   and  \ref{29Sep23}.(3).
\end{proof}


\section{Description of the set of minimal primes of a localization of a semiprime ring at a regular denominator set} \label{DESCRMINPR} 

 The aim of this section is to prove Theorem \ref{A10Sep23}.(2) that provides  an explicit description of  the set  of minimal primes of localizations of  semiprime  rings provided that they  have only finitely many minimal primes. Theorem \ref{A10Sep23}.(2) is one of the  key facts in  understanding   the structure of  embeddings of localizations of semiprime  rings  into  semisimple or  semiprimary rings, see \cite{Embed-SPrime-SSA}. As a corollary, for a semiprime ring $R$ with finitely many minimal primes, Corollary \ref{c10Sep23} describes the set of minimal primes of the largest left quotient ring $Q_l(R)$ of $R$. Corollary \ref{c10Sep23}  shows that the largest left quotient ring $Q_l(R)$ of a semiprime ring $R$ is also a semiprime ring and provides an explicit description of the set of the minimal primes $\min (Q_l(R))$ provided that $|\min (R)|<\infty$. Corollary \ref{aA10Sep23} is a generalization of Theorem \ref{A10Sep23}. Theorem \ref{28Sep23} is a generalization of Theorem \ref{A10Sep23}.(2,3) for left denominator sets that contain zero divisors.     A proof of Proposition \ref{b28Sep23} is given in this section. \\

{\bf Characterization of the set of minimal primes of a semiprime ring.}
\begin{definition} A finite set $Q$ of ideals of a ring  $R$ with zero intersection is called an {\bf irredundant  set} of ideals if $\bigcap_{\gq \in Q\backslash \{ \gq'\}}\gq\neq 0$ for all $\gq'\in Q$. 
\end{definition}

Lemma \ref{b10Sep23} is a useful characterization of  the set $\min (R)$ of minimal primes of a semirpime ring $R$  provided $|\min (R)|<\infty$. 

\begin{lemma}\label{b10Sep23}
Let $R$ be a semiprime ring with $|\min (R)|<\infty$. Then the  set $\min (R)$ is the only irredundant set that consist of prime ideals of $R$.
\end{lemma}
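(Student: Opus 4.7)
The plan is to prove the two inclusions $\min(R) \subseteq Q$ and $Q \subseteq \min(R)$, after first verifying that $\min(R)$ is itself an irredundant set. All arguments exploit only (i) the fact that if a product (or finite intersection) of prime ideals lies inside a prime ideal, then one of them does, and (ii) that every prime ideal contains a minimal prime (together with the finiteness of $\min(R)$).

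First I would verify that $\min(R)$ is irredundant: since $R$ is semiprime and $|\min(R)| < \infty$, we have $\bigcap_{\gp \in \min(R)} \gp = 0$. If $\bigcap_{\gp \in \min(R) \setminus \{\gp'\}} \gp = 0 \subseteq \gp'$ for some $\gp' \in \min(R)$, then primeness of $\gp'$ forces $\gp \subseteq \gp'$ for some $\gp \in \min(R) \setminus \{\gp'\}$, contradicting minimality of $\gp'$.

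Next, let $Q$ be any irredundant set of prime ideals (so in particular $\bigcap_{\gq \in Q} \gq = 0$ and $Q$ is finite). For each $\gp \in \min(R)$, the inclusion $\bigcap_{\gq \in Q} \gq = 0 \subseteq \gp$ and primeness of $\gp$ give some $\gq \in Q$ with $\gq \subseteq \gp$. Since every prime contains a minimal prime, pick $\gp'' \in \min(R)$ with $\gp'' \subseteq \gq \subseteq \gp$; minimality of $\gp$ forces $\gp'' = \gp = \gq$. Hence $\min(R) \subseteq Q$.

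For the reverse inclusion, suppose some $\gq \in Q$ is not minimal. Then $\min(R) \subseteq Q \setminus \{\gq\}$ by what we just showed, so
$$\bigcap_{\gq' \in Q \setminus \{\gq\}} \gq' \;\subseteq\; \bigcap_{\gp \in \min(R)} \gp \;=\; 0,$$
contradicting the irredundance of $Q$. Thus $Q \subseteq \min(R)$, and $Q = \min(R)$. There is no real obstacle here; the argument is essentially a bookkeeping exercise, and the only point requiring a little care is the use of finiteness of $Q$ (and of $\min(R)$) to apply primeness to a finite intersection.
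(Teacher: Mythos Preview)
Your proof is correct and follows essentially the same approach as the paper's. The paper also verifies that $\min(R)$ is irredundant (declaring it ``evident''), then shows $\min(R)\subseteq Q$ by the same containment-in-a-prime argument, and finally remarks that this inclusion together with irredundance of $Q$ forces $Q=\min(R)$; you have simply spelled out each of these steps more explicitly, in particular the irredundance verification and the reverse inclusion.
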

\begin{proof} Evidently, the  set $\min (R)$ is an  irredundant set that consist of prime ideals of $R$.

Suppose that $Q$ is an  irredundant set that consist of prime ideals of $R$. We have to show that $Q=\min (R)$. Since the set $\min (R)$ is an irredundant set of prime ideals, it suffices to show that $Q\supseteq\min (R)$. For each $\gp \in  \min (R)$,
$$ \bigcap_{\gq\in Q}\gq =\{0\}\subseteq \gp,$$
and so $\gq\subseteq \gp$ for some $\gq \in Q$. Hence $\gp=\gp$, by the minimality of $\gp$. Therefore, $Q\supseteq\min (R)$. 
\end{proof}

{\bf Description of the set of minimal primes of a localization of a semiprime ring at a regular denominator set.} 
 If $T$ is an ideal of $R$ then  non-empty subset $T$ of a ring $R$, the sets $\lann_R(T)$ and $\rann_R(T)$ are also ideals. Ideals of that kind are called  {\bf annihilator ideals}.

A submodule of a module is called an {\bf essential submodule} if it meets all the nonzero submodules of the module. 
In  a semiprime ring   the left annihilator of an ideal  is equal to  its  right annihilator and vice versa.

\begin{proof} {\bf (Proof of Theorem \ref{A10Sep23} )} Since $S\in \Den_l(S, 0)$, ${}_RR$ is an essential left $R$-submodule of $S^{-1}R$. 

1. Suppose that $\ga$ is a nonzero  nilpotent ideal of $S^{-1}R$ then $\ga' =R\cap \ga$ is a  nonzero  nilpotent ideal of $R$, a contradiction (since $R$ is a semiprime ring). Therefore, $R$ is a semiprime ring.

2 and 3. The case where $|\min (R)|=1$, i.e. the ring $R$ is a prime ring, follows from Lemma \ref{a10Sep23}.

 So, we assume that $|\min (R)|\geq 2$.
 For each $\gp \in \min (R)$, let $\gp^c:=\bigcap_{\gq\in \min (R)\backslash \{ \gp\} }\gq$. By  Lemma \ref{b10Sep23}, $\gp^c\neq 0$. 

(i) {\em For all $\gp \in \min (R)$, $\ann (\gp^c)=\gp$}: The ring $R$ is a semiprime ring, i.e. $\bigcap_{\gp \in \min (R)}\gp=0$. Hence, $\gp\gp^c=0$. Therefore, $$\gp\subseteq \ann_R(\gp^c).$$ Suppose that $\ga\gp^c=0$  for some ideal $\ga$ of $R$. Then $\ga\gp^c\subseteq \gp$, and so $$\ga\subseteq \gp$$ (since  $\gp^c\not\subseteq \gp$), and the statement (i) follows.

(ii) {\em For all $\gp \in \min (R)$, $\ann (\gp)=\gp^c$}: By the statement (i), $\ann (\gp)\supseteq \gp^c$. Suppose that $\ga\gp=0$ for some ideal $\ga$ of $R$.  Then $\ga\gp\subseteq \gq$ for all $\gq \in \min (R)\backslash \{ \gp\}$, and so $$\ga\subseteq \gq$$ (since  $\gp\not\subseteq \gq$), i.e.  $\ga\subseteq \gp^c$ and the statement (ii) follows.

(iii) {\em For all $\gp \in \min (R)$, $S\cap \gp =\emptyset$}: Since $\gp^c\neq 0$, $\gp^c\gp = \gp \gp^c=0$ (the statement (i)),  and $S\subseteq \CC_R $, we must have 
$ S\cap \gp =\emptyset $ for all $\gp \in \min (R)$.

Let $\pi_\gp : R\ra R/\gp$, $r\mapsto r+\gp$.

(iv)  {\em For all $\gp \in \min (R)$,  $\pi_\gp (S)\subseteq \CC_{R/\gp}$} and $\pi_\gp (S)\in \Den_l(R/\gp, 0)$: Suppose that $\pi_\gp (S)\not\subseteq \CC_{R/\gp}$. Then there are elements $s\in S$ and $r\in R\backslash \gp$ such that  either $sr\in\gp$ or $rs\in \gp$. Let us to consider the first case as the arguments in the second case are `symmetrical' to the ones in the first case. It follows from the inclusion $sr\in\gp$ and the statement (i) that $sr\gp^c=0$. Hence,  $r\gp^c=0$ since $s\in S\subseteq \CC_R$. By the statement (i), $r\in \gp$, a contradiction. The image of a left Ore set under a ring  epimorphism is a left Ore set provide that it does not contains $0$. Hence,  $\pi (S)\in \Den_l(R/\gp, 0)$ since $\pi_\gp (S)\subseteq \CC_{R/\gp}$.

(v) {\em For each  $\gp \in \min (R)$, the left ideal  $S^{-1}\gp$ of $S^{-1}R$ is an ideal}: By the statement (iv), $$\tor_{r,S}(R/\gp )=0.$$ Hence 
  $S^{-1}\gp$ is an ideal  of $S^{-1}R$, by Proposition \ref{A11Sep23}.(1).

 
(vi) $\{ S^{-1}R\, | \, \gp \in \min (R)\}\subseteq \Spec (S^{-1}R)$ {\em and} $S^{-1}(R/\gp)\simeq S^{-1}R/S^{-1}\gp\simeq \pi_\gp (S)^{-1}(R/\gp)$:  By Lemma \ref{a10Sep23}, for each $\gp \in \min (R)$, the ring $S^{-1}(R/\gp)$ is a prime ring. By the statements (iv) and  (v), $S^{-1}(R/\gp)\simeq S^{-1}R/S^{-1}\gp\simeq \pi_\gp (S)^{-1}(R/\gp)$,  and so 
$$S^{-1}\gp\in \Spec (S^{-1}R).$$

(vii) {\em The set $\{ S^{-1}R\, | \, \gp \in \min (R)\}$ is an irredundant set of prime ideals of $S^{-1}R$}: Since $S\in \Den_l(R,0)$, we have that   
\begin{eqnarray*}
0&=&S^{-1}0=S^{-1}\bigg(\bigcap_{\gp \in \min (R)}\gp \bigg)=\bigcap_{\gp \in \min (R)}S^{-1}\gp,  \\
 0&\neq& \bigcap_{\gp\in \min (R)\backslash \{ \gp'\} }\gp\subseteq S^{-1}\bigg(\bigcap_{\gp\in \min (R)\backslash \{ \gp'\} }\gp \bigg)=\bigcap_{\gp\in \min (R)\backslash \{ \gp'\} }S^{-1}\gp\;\; {\rm for\; all}\;\; \gp'\in \min (R),
\end{eqnarray*}
and the statement (vii) follows.

(viii) $\min (S^{-1}R)=\{ S^{-1}R\, | \, \gp \in \min (R)\}$: The statement (viii) follows from the statement (vii) and Lemma \ref{b10Sep23}.
\end{proof}

Corollary \ref{c10Sep23}.(1) shows that the largest left quotient ring $Q_l(R)$ of a semiprime ring $R$ is also a semiprime ring. Corollary \ref{c10Sep23}.(2) and provides an explicit description of the set of its minimal primes provided that $|\min (R)|<\infty$.

\begin{corollary}\label{c10Sep23}
Let $R$ be a semiprime ring. Then 
\begin{enumerate}

\item The ring $Q_l(R)$ is a semiprime ring.

\item If, in  addition,  $|\min (R)|<\infty$.  Then 
\begin{enumerate}

\item $\min (Q_l(R))=\{ S_l(R)^{-1}\gp\, | \, \gp \in \min (R)\}$.

\item For all $\gp \in \min (R)$, $\pi_\gp (S_l(R))\in \Den_l(R/\gp , 0)$ and  $$\pi_\gp (S_l(R))^{-1}(R/\gp)\simeq S_l(R)^{-1}R/S_l(R)^{-1}\gp\simeq Q_l(R)/S_l(R)^{-1}\gp \simeq S_l(R)^{-1}(R/\gp)$$ where $\pi_\gp : R\ra R/\gp$, $r\mapsto r+\gp$.

\item For all $\gp \in \min (R)$, $\pi_\gp (S_l(R))\subseteq S_l(R/\gp)$ and $Q_l(R)/S_l(R)^{-1}\gp \subseteq Q_l(R/\gp)$.

\end{enumerate}
\end{enumerate}
\end{corollary}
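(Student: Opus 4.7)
The plan is to apply Theorem \ref{A10Sep23} directly with $S := S_l(R)$. By construction, $S_l(R)$ is the largest left Ore set consisting of regular elements of $R$, so $\ass_l(S_l(R)) = 0$, which means $S_l(R) \in \Den_l(R, 0)$. Statement (1) then follows immediately from Theorem \ref{A10Sep23}.(1): the localization of a semiprime ring at a regular left denominator set is semiprime. Statement (2a) follows at once from Theorem \ref{A10Sep23}.(2), and the three isomorphisms in (2b) are precisely Theorem \ref{A10Sep23}.(3) (the first and third isomorphisms) combined with the definition $Q_l(R) = S_l(R)^{-1}R$ (the second isomorphism).

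For (2c), I would argue as follows. From Theorem \ref{A10Sep23}.(3), $\pi_\gp(S_l(R)) \in \Den_l(R/\gp, 0)$; in particular, $\pi_\gp(S_l(R))$ is a left Ore set of the factor ring $R/\gp$ consisting of regular elements (since its associated left ideal is zero). By the maximality of $S_l(R/\gp)$ as a regular left Ore set of $R/\gp$, we obtain
\[
\pi_\gp(S_l(R)) \subseteq S_l(R/\gp).
\]

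It remains to establish the inclusion $Q_l(R)/S_l(R)^{-1}\gp \subseteq Q_l(R/\gp)$. By (2b), the left-hand side is isomorphic to $\pi_\gp(S_l(R))^{-1}(R/\gp)$. Since $\pi_\gp(S_l(R)) \subseteq S_l(R/\gp)$ and both sets are regular left denominator sets of $R/\gp$, the universal property of left localization produces a canonical ring homomorphism
\[
\pi_\gp(S_l(R))^{-1}(R/\gp) \ra S_l(R/\gp)^{-1}(R/\gp) = Q_l(R/\gp),
\]
and this map is injective: if $s^{-1}\bar r \mapsto 0$ then $t \bar r = 0$ for some $t \in S_l(R/\gp)$, and regularity of $t$ forces $\bar r = 0$. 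This gives the required embedding.

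The construction is essentially a direct specialization, so I do not foresee a serious obstacle; the only point that requires a moment of care is the justification that the natural map between the two localizations in (2c) is an \emph{inclusion} rather than merely a ring homomorphism. This is handled by the regularity of elements of $S_l(R/\gp)$, which is built into the definition of the largest left quotient ring.
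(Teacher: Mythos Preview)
Your proposal is correct and follows essentially the same approach as the paper: specialize Theorem \ref{A10Sep23} to $S=S_l(R)\in\Den_l(R,0)$ for parts (1), (2a), (2b), and then use the maximality of $S_l(R/\gp)$ to get $\pi_\gp(S_l(R))\subseteq S_l(R/\gp)$ and the resulting embedding of localizations for (2c). The paper's proof is terser on the last inclusion, whereas you spell out the injectivity via regularity; this is the right justification and adds no new ingredients.
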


\begin{proof} 1 and 2. By the definition, $Q_l(R)=S_l(R)^{-1}R$ and $S_l(R)\in \Den_l(R, 0)$, and statement 1 and 2 follow from Theorem  \ref{A10Sep23}.(1,2).

3. Statement 3 is a particular case of Theorem \ref{A10Sep23}.(3). 

4. By the definition, $S_l(R/\gp)$ is the largest left denominator set of  the ring $R/\gp$ that consists of regular elements of  $R/\gp$. By statement 3, $$\pi_\gp (S_l(R))\in \Den_l(R/\gp , 0),$$  and so  $\pi_\gp (S_l(R))\subseteq S_l(R/\gp)$ and $Q_l(R)/S_l(R)^{-1}\gp \subseteq Q_l(R/\gp)$.
\end{proof}

Corollary \ref{aA10Sep23} is a generalization of Theorem \ref{A10Sep23}.

\begin{corollary}\label{aA10Sep23}
Let $R$ be a  ring, $S\in \Den_l(R, \ga)$,  $\ga$ be a semiprime ideal, $\bR=R/\ga$ and $\bS=\{ s+\ga\, | \, s\in S\}$. Then 
\begin{enumerate}

\item The ring $S^{-1}R\simeq \bS^{-1}\bR$ is a semiprime ring.

\item If, in  addition,  $|\min (\bR)|<\infty$ then $\min (S^{-1}R)=\{ S^{-1}\gp =\bS^{-1}\gp\, | \, \gp \in \min (\bR)\}$, i.e. the map $\min (\bR)\ra \min (S^{-1}R)$, $\gp \mapsto S^{-1}\gp$ is a bijection and $|\min (S^{-1}R)|=|\min (\bR)|$.

\item If, in  addition,  $|\min (\bR)|<\infty$ then
$\pi_\gp (\bS)\in \Den_l(R/\gp , 0)$ and  $\pi_\gp (\bS)^{-1}(\bR/\gp)\simeq \bS^{-1}\bR/\bS^{-1}\gp \simeq \bS^{-1}(\bR/\gp)$ where $\pi_\gp : \bR\ra \bR/\gp$, $r\mapsto r+\gp$.
\end{enumerate}
\end{corollary}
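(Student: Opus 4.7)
The plan is to reduce everything to Theorem \ref{A10Sep23} applied to the factor ring $\bR = R/\ga$. The basic observation driving the reduction is that the canonical map $\sigma_S : R \to S^{-1}R$, $r\mapsto r/1$, has kernel $\ga = \ass_l(S)$, so it factors through $\bR$. I would first check the folklore step that $\bS = \pi_\ga(S) \in \Den_l(\bR, 0)$ and that the induced map $\bS^{-1}\bR \to S^{-1}R$, $\bar s^{-1}\bar r\mapsto s^{-1}r$, is a (well-defined) ring isomorphism. The Ore condition for $\bS$ is inherited directly from that for $S$; the fact that $\bS$ consists of regular elements of $\bR$ follows because $\bar s \bar r = 0$ in $\bR$ means $trs \in \ga = \ass_l(S)$, i.e. $t_1 t r s = 0$ for some $t_1\in S$, whence $t_2 t_1 t r = 0$ for some $t_2 \in S$ by the left denominator condition on $S$, giving $\bar r = 0$.

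Once this identification is in place, the three claims reduce to Theorem \ref{A10Sep23} applied to the pair $(\bR, \bS)$. For statement (1), since $\ga$ is semiprime, $\bR$ is a semiprime ring, and $\bS \in \Den_l(\bR, 0)$; hence Theorem \ref{A10Sep23}(1) yields that $\bS^{-1}\bR \simeq S^{-1}R$ is semiprime. For statement (2), under the additional assumption that $|\min(\bR)| < \infty$, Theorem \ref{A10Sep23}(2) gives
$$\min(\bS^{-1}\bR) = \{\bS^{-1}\gp \mid \gp \in \min(\bR)\}$$
together with the claimed bijection and cardinality; transporting this along the isomorphism $\bS^{-1}\bR \simeq S^{-1}R$ yields the statement. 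For statement (3), Theorem \ref{A10Sep23}(3) applied to $(\bR, \bS)$ gives $\pi_\gp(\bS) \in \Den_l(\bR/\gp, 0)$ and the chain of isomorphisms
$$\pi_\gp(\bS)^{-1}(\bR/\gp) \simeq \bS^{-1}\bR / \bS^{-1}\gp \simeq \bS^{-1}(\bR/\gp),$$
which is exactly the claim.

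There is really no hard step: the only thing requiring any care is the passage from $(R, S)$ to $(\bR, \bS)$, and in particular the verification that $\bS \subseteq \CC_{\bR}$. I expect this to be the only point worth spelling out, since everything else is a routine translation through the isomorphism $S^{-1}R \simeq \bS^{-1}\bR$. In fact, this isomorphism is a special case of the result recorded in Theorem \ref{10Jan19-st1,2}, so one may simply cite it; the remainder of the argument is then a direct invocation of Theorem \ref{A10Sep23}.
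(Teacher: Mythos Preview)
Your proposal is correct and follows exactly the same route as the paper: reduce to the semiprime ring $\bR=R/\ga$ via the standard identification $S^{-1}R\simeq \bS^{-1}\bR$ (with $\bS\in\Den_l(\bR,0)$), then invoke Theorem~\ref{A10Sep23}. The paper's proof is in fact terser than yours---it simply asserts the reduction and cites Theorem~\ref{A10Sep23}---so your extra verification that $\bS\subseteq\CC_{\bR}$ (modulo a small slip: you wrote $\bar s\bar r=0$ where you appear to be checking the case $\bar r\bar s=0$) is more detail than the paper supplies, not less.
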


\begin{proof} The ideal $\ga$ is a semiprime ideal of $R$. Hence, the factor ring $\bR=R/\ga$ is a semiprime ring. It follows from the inclusion $S\in \Den_l(S, \ga)$ that  $\bS\in \Den_l(\bR, 0)$, $S^{-1}R\simeq \bS^{-1}\bR$ is an $R$-isomorphism. Now, the corollary follow from Theorem \ref{A10Sep23}. 
\end{proof}

\begin{proposition}\label{A15Sep23}
Let $R$ be a semiprime ring with $|\min (R)|<\infty$ and $*\in \{ l,r,\emptyset\}$. Then 
\begin{enumerate}
\item $S_*(R)\subseteq \bigcap_{\gp \in \min (R)}\pi_\gp^{-1} (S_*(R/\gp ))$.
\item The homomorphism  
$$\prod_{\gp \in \min (R)}\pi_\gp':Q_*(R)\ra \prod_{\gp \in \min (R)}Q_*(R/\gp)$$
 is an $R$-monomorphism where the homomorphisms  $\pi_\gp' :Q_*(R)\ra Q_*(R/\gp)$  are defined  in the commutative diagram below. 
\end{enumerate}
\end{proposition}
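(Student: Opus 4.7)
The plan is to reduce both statements to Corollary \ref{c10Sep23}.(4) together with the universal property of localization, and then exploit the semiprimeness of $R$ (i.e.\ $\bigcap_{\gp \in \min (R)}\gp = 0$) to extract injectivity.

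For statement 1 in the case $*=l$ it is literally Corollary \ref{c10Sep23}.(4): $\pi_\gp(S_l(R)) \subseteq S_l(R/\gp)$, which rewrites as $S_l(R) \subseteq \pi_\gp^{-1}(S_l(R/\gp))$ for each $\gp \in \min(R)$; intersecting over $\min(R)$ gives the claim. The case $*=r$ is the right-handed mirror. For $*=\emptyset$, I will use that $S(R) := S_{l,r}(R) \subseteq S_l(R)$, so $\pi_\gp(S(R)) \subseteq \pi_\gp(S_l(R)) \subseteq \CC_{R/\gp}$; moreover, the image of a two-sided Ore set under the epimorphism $\pi_\gp$ is itself a two-sided Ore set in $R/\gp$ provided it avoids $0$, which holds since it lies in $\CC_{R/\gp}$. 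Hence $\pi_\gp(S(R))$ is a two-sided denominator set of $R/\gp$ consisting of regular elements, so $\pi_\gp(S(R)) \subseteq S_{l,r}(R/\gp)$.

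For statement 2, by statement 1, for each $\gp \in \min(R)$ every element of $S_*(R)$ is sent by the composite $R \xrightarrow{\pi_\gp} R/\gp \xrightarrow{\s_{S_*(R/\gp)}} Q_*(R/\gp)$ to a unit. By the universal property of the localization $Q_*(R) = S_*(R)^{-1}R$ (which is an Ore localization so the universal property is available) there is a unique ring homomorphism $\pi_\gp' : Q_*(R) \to Q_*(R/\gp)$ making the square
$$
\begin{array}{ccc}
R & \xrightarrow{\pi_\gp} & R/\gp \\[2pt]
\downarrow & & \downarrow \\[2pt]
Q_*(R) & \xrightarrow{\pi_\gp'} & Q_*(R/\gp)
\end{array}
$$
commute, and $\pi_\gp'$ is automatically an $R$-module homomorphism. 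Taking the product yields the claimed homomorphism.

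To show injectivity, take $q \in Q_*(R)$ with $\pi_\gp'(q) = 0$ for every $\gp \in \min(R)$, and write $q = s^{-1}r$ with $s \in S_*(R)$ and $r \in R$. Then in $Q_*(R/\gp)$ we have $\pi_\gp(s)^{-1}\pi_\gp(r) = 0$, and since $\pi_\gp(s)$ is a unit there, $\pi_\gp(r) = 0$ in $Q_*(R/\gp)$. The canonical map $R/\gp \to Q_*(R/\gp)$ is injective because $R/\gp$ is a prime ring and $S_*(R/\gp) \subseteq \CC_{R/\gp}$, so $\pi_\gp(r) = 0$ in $R/\gp$, i.e.\ $r \in \gp$. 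This holds for every $\gp \in \min(R)$, and $R$ is semiprime, so $r \in \bigcap_{\gp \in \min(R)}\gp = \gn_R = 0$, whence $q = s^{-1}\cdot 0 = 0$.

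The genuinely delicate step is statement 1 for $*=\emptyset$: one must verify that the epimorphic image of a two-sided Ore set of regular elements remains a two-sided \emph{denominator} set (not merely a two-sided Ore set) in the prime quotient $R/\gp$, and this is where primeness of $R/\gp$ and the inclusion $\pi_\gp(S_{l,r}(R)) \subseteq \CC_{R/\gp}$ combine with the left-handed content of Corollary \ref{c10Sep23}.(4); the injectivity argument in statement 2 is then essentially formal once statement 1 supplies the universal factorisation.
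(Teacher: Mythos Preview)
Your proof is correct and follows essentially the same route as the paper's: both derive statement 1 from the fact that $\pi_\gp(S_*(R)) \in \Den_*(R/\gp,0)$ (the paper cites Theorem \ref{A10Sep23}.(3) directly, you cite its repackaging as Corollary \ref{c10Sep23}), then invoke maximality of $S_*(R/\gp)$, and both obtain $\pi_\gp'$ from the universal property. You are considerably more explicit than the paper, which for statement 2 simply asserts that the product map is a monomorphism ``by the commutative diagram'' without writing out the semiprimeness argument $r\in\bigcap_\gp\gp=0$, and which dismisses the cases $*=r,\emptyset$ with ``the other two cases can be treated in a similar way''; your handling of the two-sided case (image of a two-sided Ore set lands in $\CC_{R/\gp}$, hence is a two-sided denominator set, hence sits inside $S_{l,r}(R/\gp)$) is a genuine addition.
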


\begin{proof}  Let us prove the result for $*=l$. 
The other two cases can be treated in a similar way. 

1. By Theorem \ref{A10Sep23}.(3), $\pi_\gp (S_l(R))\in \Den_l(R/\gp , 0)$ and so $\pi_\gp (S_l(R))\subseteq S_l(R/\gp)$, by the maximality of $S_l(R/\gp)$, and there is a commutative diagram of ring homomorphisms:
 \begin{displaymath}
    \xymatrix{
        R \ar[r]^{\pi_\gp} \ar[d]_{\s} & R/\gp \ar[d]^{\s_\gp} \\
        Q_l(R) \ar[r]^{\pi_\gp'}       & Q_l(R/\gp) }
\end{displaymath}
where $\s (r)=\frac{r}{1}$, $\s_\gp (\pi_\gp (r))=\frac{\pi_\gp (r)}{1}$ and $\pi_\gp'(s^{-1}r)=\pi_\gp (s)^{-1}\pi_\gp (r)$ for all $r\in R$ and $s\in S_l(R)$. Now, statement 1 follows from Theorem \ref{4Jul10}.(1) and the commutative diagram above. 

2. By the commutative diagram above,  the map 
$$\prod_{\gp \in \min (R)}\pi_\gp':Q_*(R)\ra \prod_{\gp \in \min (R)}Q_*(R/\gp)$$
 is an $R$-monomorphism. 
\end{proof}

For an ideal $\ga$ of a ring $R$, let 
 $\Ore_l(R, \ga ) := \{ S\in \Ore_l(R)\, | \, \ass_l (S)=\ga \}$,  
 $\Den_l(R, \ga ) := \{ S\in \Den_l(R)\, | \, \ass (S)=\ga \}$. Let $\Ass_l(R):= \{ \ass_l (S)\, | \, S\in \Den_l(R)\}$.

\begin{definition}
(\cite{larglquot}) Let  $S_{l,\ga }(R)$
be  the  largest element of the poset $(\Den_l(R, \ga ),
\subseteq )$ ($S_{l,\ga }(R)$ exists,\cite[Theorem 2.1.(2)]{larglquot}). The ring  $$Q_{l,\ga }(R):=S_{l,\ga}(R)^{-1} R$$ is called  the
{\bf largest left quotient ring associated to} $\ga$.
\end{definition}

For each denominator set $S\in \Den_l(R, \ga )$ where $\ga := \ass_l
(S)$, there are natural ring homomorphisms $R\stackrel{\pi}{\ra}
R/ \ga \ra S^{-1}R$.

\begin{lemma}\label{a20Sep23}
(\cite[Lemma 3.3.(2)]{larglquot})
 Let $\ga\in \Ass_l(R)$ and
$\pi : R\ra R/ \ga$, $a\mapsto a+\ga$. Then $\pi^{-1} (S_l(R/ \ga
)) = S_{l,\ga} (R)$, $\pi (S_{l,\ga} (R)) = S_l(R/ \ga )$ and  $ Q_{l,\ga}
(R) = Q_l( R/ \ga )$. 
\end{lemma}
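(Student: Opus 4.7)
The plan is to prove all three equalities simultaneously by establishing that $T:=\pi^{-1}(S_l(R/\ga))$ belongs to $\Den_l(R,\ga)$; then the maximality properties of $S_{l,\ga}(R)$ and of $S_l(R/\ga)$ will force $T=S_{l,\ga}(R)$ and $\pi(S_{l,\ga}(R))=S_l(R/\ga)$, after which the isomorphism of quotient rings will follow from universal properties.

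First, I would observe the easy direction: for any $S\in\Den_l(R,\ga)$, the image $\bS:=\pi(S)$ lies in $\Den_l(R/\ga,0)$. Indeed, the Ore condition passes through $\pi$, and if $\bs\br=0$ with $\bs=\pi(s)$ then $sr\in\ga=\ass_l(S)$, so $s'sr=0$ for some $s'\in S$, forcing $r\in\ass_l(S)=\ga$, i.e.\ $\br=0$. Thus $\bS\subseteq\CC_{R/\ga}$ and $\bS\in\Den_l(R/\ga,0)$, so by maximality of $S_l(R/\ga)$ we get $\bS\subseteq S_l(R/\ga)$, i.e.\ $S\subseteq T$. In particular $S_{l,\ga}(R)\subseteq T$ and $\pi(S_{l,\ga}(R))\subseteq S_l(R/\ga)$.

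Next, and this is the main step, I would show $T\in\Den_l(R,\ga)$. Multiplicativity and $0\notin T$ are immediate. For the left Ore condition, given $t\in T$ and $r\in R$, apply the Ore condition in $R/\ga$ to $\pi(t)\in S_l(R/\ga)$ and $\pi(r)$: lift the resulting equation back to $R$ to produce $t'\in T$ and $r'\in R$ with $t'r-r't\in\ga$. Here comes the crucial use of the hypothesis $\ga\in\Ass_l(R)$: fix any $S_0\in\Den_l(R,\ga)$; then $S_0\subseteq T$ by the previous paragraph, so some element of $S_0$ annihilates $t'r-r't$ on the left, producing an element of $T$ that multiplies $r$ into $Rt$. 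For the left denominator condition, if $rt=0$ with $t\in T$, then $\pi(r)\pi(t)=0$ with $\pi(t)\in\CC_{R/\ga}$ gives $r\in\ga=\ass_l(S_0)$, and again some $s_0\in S_0\subseteq T$ kills $r$. The same argument identifies $\ass_l(T)=\ga$. By maximality of $S_{l,\ga}(R)$, $T\subseteq S_{l,\ga}(R)$, so $T=S_{l,\ga}(R)$, which yields $\pi^{-1}(S_l(R/\ga))=S_{l,\ga}(R)$ and, taking images, $\pi(S_{l,\ga}(R))=S_l(R/\ga)$.

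Finally, for $Q_{l,\ga}(R)=Q_l(R/\ga)$: since $\ass_l(S_{l,\ga}(R))=\ga$, the kernel of $\s\colon R\to Q_{l,\ga}(R)$ contains $\ga$, so $\s$ factors through $R/\ga$; this factored map inverts $\pi(S_{l,\ga}(R))=S_l(R/\ga)$, so the universal property of $Q_l(R/\ga)$ yields a homomorphism $Q_l(R/\ga)\to Q_{l,\ga}(R)$. Conversely, $R\stackrel{\pi}{\to} R/\ga\to Q_l(R/\ga)$ inverts $S_{l,\ga}(R)=\pi^{-1}(S_l(R/\ga))$ and has kernel $\ga=\ass_l(S_{l,\ga}(R))$, so the universal property of $Q_{l,\ga}(R)$ gives an inverse map; composing gives the required $R$-isomorphism. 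The only real obstacle is the left Ore/denominator verification for $T$, where I must lean on the standing assumption $\ga\in\Ass_l(R)$ to supply elements of $T$ that annihilate things that vanish modulo $\ga$.
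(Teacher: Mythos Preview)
Your argument is correct. The paper does not supply its own proof of this lemma; it is quoted verbatim from \cite[Lemma 3.3.(2)]{larglquot}, so there is nothing to compare against here. Your approach---pulling back $S_l(R/\ga)$ along $\pi$, showing the preimage lies in $\Den_l(R,\ga)$ by borrowing annihilators from an auxiliary $S_0\in\Den_l(R,\ga)$ (available precisely because $\ga\in\Ass_l(R)$), and then invoking the two maximality properties---is the standard and essentially unique route. One small remark: where you write ``Thus $\bS\subseteq\CC_{R/\ga}$'' you have only checked left regularity explicitly; right regularity (needed for the claim as stated, though not for $\bS\in\Den_l(R/\ga,0)$) follows by the symmetric computation using the left denominator condition on $S$, so the gap is cosmetic.
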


Let $R$ be a semiprime ring with $|\min (R)|<\infty$. For each $\gp\in \min (R)$, the subset of $R$, 
$$T_l(\gp):= \Big(\s_\gp\pi_\gp\Big)^{-1}(Q_l(R/\gp)^\times )=\pi_\gp^{-1}(S_l(R/\gp)),
$$ is a multiplicative set, see the diagram in the proof of Proposition \ref{A15Sep23}.(1) for the definitions of the maps $\pi_\gp$ and $\s_\gp$. The second equality follows from the  equality
$S_l(R/\gp)=\s_\gp^{-1}(Q_l(R/\gp)^\times)$ (Theorem \ref{4Jul10}.(1)). For each $\gp\in \min (R)$, let 
\begin{eqnarray*}
\ga_l(\gp)&:=& \ass_l(T_l(\gp))=\{ r\in R\, | \, tr=0\; {\rm  for\; some} \;\;t\in T_l(\gp )\},\\
\ga_r(\gp)&:=& \ass_r(T_l(\gp))=\{ r\in R\, | \, rt=0\; {\rm  for\; some} \;\;t\in T_l(\gp )\}.
\end{eqnarray*}
By the definition, $ \ga_l(\gp)$ and $\ga_r(\gp)$
are right and left ideals of the ring $R$, respectively, but not ideals, in general. 

\begin{proposition}\label{19Sep23}
Let $R$ be a semiprime ring with $|\min (R)|<\infty$. Then 
\begin{enumerate}

\item $\ga_l(\gp)\subseteq \gp$,  $\ga_r(\gp)\subseteq \gp$ and $\bigcap_{\gp \in \min (R)}\ga_l(\gp)=\bigcap_{\gp \in \min (R)}\ga_r(\gp)=0$.

\item  $T_l(\gp) \in \Ore_l(R)$ iff for each pair of elements $(s,p)\in T_l(\gp)\times \gp$
there is a pair of elements $(s',r')\in  T_l(\gp)\times R$ such that $s'r-r's\in \ga_l(\gp)$.

\item Suppose that $T_l(\gp)\in \Den_l(R)$.  Then 
\begin{enumerate}

\item $T_l(\gp)^{-1}\gp$ is an ideal of the ring $T_l(\gp)^{-1}R$.

\item $T_l(\gp)^{-1}R/T_l(\gp)^{-1}\gp\simeq Q_l(R/\gp)$.

\item If, in addition, $\ga_l(\gp)=\gp$ then $S_{l, \gp}(R)=T_l(\gp)$ and $Q_{l, \gp}(R)\simeq Q_l(R/\gp) \simeq T_l(\gp)^{-1}R$.

\end{enumerate}

\end{enumerate}
\end{proposition}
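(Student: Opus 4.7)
The plan will rely throughout on the defining identity $\pi_\gp(T_l(\gp)) = S_l(R/\gp)$ together with the inclusion $S_l(R/\gp) \subseteq \CC_{R/\gp}$. For Part~1, if $r \in \ga_l(\gp)$ then $tr = 0$ for some $t \in T_l(\gp)$, and applying $\pi_\gp$ gives $\pi_\gp(t)\pi_\gp(r) = 0$ with $\pi_\gp(t)$ regular in $R/\gp$, forcing $\pi_\gp(r) = 0$ and hence $r \in \gp$; the argument for $\ga_r(\gp)$ is symmetric. The intersection $\bigcap_{\gp \in \min(R)} \ga_l(\gp)$ is then contained in $\bigcap_{\gp \in \min(R)} \gp = \gn_R$, which vanishes by the semiprimeness of $R$, and similarly on the right.

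For Part~2, the forward direction is immediate: if $T_l(\gp) \in \Ore_l(R)$, then applying the left Ore condition to a pair $(s, p) \in T_l(\gp) \times \gp$ produces $s' \in T_l(\gp)$ and $r' \in R$ with $s'p = r's$, so $s'p - r's = 0 \in \ga_l(\gp)$. For sufficiency I would verify the left Ore condition for an arbitrary pair $(s, r) \in T_l(\gp) \times R$ in two steps. Since $\pi_\gp(T_l(\gp)) = S_l(R/\gp) \in \Den_l(R/\gp, 0)$ is a left denominator set of the prime ring $R/\gp$, Ore there lifts to produce $s_1 \in T_l(\gp)$ and $r_1 \in R$ with $p := s_1 r - r_1 s \in \gp$. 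Applying the hypothesis to $(s, p)$ yields $s_2 \in T_l(\gp)$ and $r_2 \in R$ such that $s_2 p - r_2 s = (s_2 s_1) r - (s_2 r_1 + r_2) s$ lies in $\ga_l(\gp)$, hence is annihilated on the left by some $t \in T_l(\gp)$. Then $(t s_2 s_1) r = (t(s_2 r_1 + r_2)) s$ with $t s_2 s_1 \in T_l(\gp)$ by multiplicative closure, establishing Ore. The two-step lift, first reducing modulo $\gp$ and then killing the remainder via $\ga_l(\gp)$, is the main obstacle I anticipate; getting the bookkeeping right so that every intermediate multiplier ends up inside $T_l(\gp)$ is the delicate point.

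For Part~3(a), Proposition~\ref{A11Sep23} reduces the claim to the implication $rs \in \gp$ and $s \in T_l(\gp)$ $\Longrightarrow$ $s'r \in \gp$ for some $s' \in T_l(\gp)$; but $rs \in \gp$ forces $\pi_\gp(r)\pi_\gp(s) = 0$ in the prime ring $R/\gp$ with $\pi_\gp(s)$ regular, so $r \in \gp$ already and one may take $s' = 1$. For~(b), the composition $R \to R/\gp \hookrightarrow Q_l(R/\gp)$ sends each $t \in T_l(\gp)$ to a unit (its image lies in $S_l(R/\gp) = Q_l(R/\gp)^\times \cap (R/\gp)$ by Theorem~\ref{4Jul10}.(1)), so the universal property of left localization yields a surjective ring homomorphism $\varphi : T_l(\gp)^{-1}R \to Q_l(R/\gp)$; its kernel visibly contains $T_l(\gp)^{-1}\gp$, and equality follows because $R/\gp$ embeds into $Q_l(R/\gp)$. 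For~(c), the combined hypotheses $T_l(\gp) \in \Den_l(R)$ and $\ga_l(\gp) = \ass_l(T_l(\gp)) = \gp$ give $T_l(\gp) \in \Den_l(R, \gp)$ with $\gp \in \Ass_l(R)$, and Lemma~\ref{a20Sep23} then identifies $S_{l,\gp}(R) = \pi_\gp^{-1}(S_l(R/\gp)) = T_l(\gp)$ and $Q_{l,\gp}(R) \simeq Q_l(R/\gp)$. Finally, every $p \in \gp = \ga_l(\gp)$ is left-annihilated by some $t \in T_l(\gp)$, so $T_l(\gp)^{-1}\gp = 0$, promoting the surjection in~(b) to an isomorphism $T_l(\gp)^{-1}R \simeq Q_l(R/\gp)$.
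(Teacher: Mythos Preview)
Your proof is correct and, in the main, follows the paper's approach. Two points are worth flagging.

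\textbf{Part 2.} Your argument is actually more complete than the paper's. The paper's $(\Leftarrow)$ direction only verifies the left Ore condition for pairs $(s,p)$ with $p\in\gp$: it takes the hypothesis, finds $t\in T_l(\gp)$ with $t(s'p-r's)=0$, and declares the Ore condition established. The reduction from a general element $r\in R$ to one in $\gp$---precisely your first step, lifting Ore from $S_l(R/\gp)\in\Den_l(R/\gp,0)$---is left implicit. Your two-step bookkeeping is the right way to close this, and it goes through exactly as you wrote it.

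\textbf{Part 3(a)--(b).} Here the paper takes a slightly different route: rather than invoking Proposition~\ref{A11Sep23} to show $T_l(\gp)^{-1}\gp$ is two-sided, it constructs the ring homomorphism $\iota_\gp:T_l(\gp)^{-1}R\to Q_l(R/\gp)$ via the universal property, applies the exact functor $T_l(\gp)^{-1}(-)$ to $0\to\gp\to R\to R/\gp\to 0$, and identifies $T_l(\gp)^{-1}\gp=\ker(\iota_\gp)$. Since the kernel of a ring homomorphism is automatically a two-sided ideal, this yields (a) and (b) in one stroke. Your approach separates the two: Proposition~\ref{A11Sep23} for (a), then the kernel computation for (b). Both are correct; the paper's packaging is marginally slicker, while yours makes the role of the regularity of $\pi_\gp(s)$ more transparent.

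Part~1 and Part~3(c) match the paper essentially verbatim.
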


\begin{proof} 1. Let us prove that statement 1 holds  for $\ga_l(\gp)$. The second case can be treated in a similar way (using symmetrical arguments).
 If $r\in \ga_l(\gp)$ them $tr=0$ for some element $t\in T_l(\gp)$. By the definition, $\s_\gp \pi_\gp(T_l(\gp))\subseteq Q_l(R/\gp)^\times$.  It follows that  $0=\s_\gp \pi_\gp (tr)=\s_\gp \pi_\gp (t)\s_\gp \pi_\gp (r)$, and so $$\s_\gp \pi_\gp (r)=0,$$ i.e. $\pi_\gp (r)=0$. This means that $r\in \gp$. So, $\ga_l(\gp)\subseteq \gp$. Now,  $\bigcap_{\gp \in \min (R)}\ga_l(\gp)\subseteq \bigcap_{\gp \in \min (R)}\gp=0$, and so  $\bigcap_{\gp \in \min (R)}\ga_l(\gp)=0$.

2. $(\Rightarrow )$ Suppose that  $T_l(\gp) \in \Ore_l(R)$. Then for each pair of elements $(s,p)\in T_l(\gp)\times \gp$
there is a pair of elements $(s',r')\in  T_l(\gp)\times R$ such that $s'r-r's=0\in \ga_l(\gp)$.

$(\Leftarrow )$ We have to show that the left Ore condition holds for the multiplicative subset $T_l(\gp)$. By the assumption, for each pair of elements $(s,p)\in T_l(\gp)\times \gp$
there is a pair of elements $(s',r')\in  T_l(\gp)\times R$ such that $s'r-r's\in \ga_l(\gp)$. By the definition of the set $|ga_l(\gp)$, there is an element $t\in T_l(\gp)$ such that $$t(s'r-r's)=0.$$ Then $(ts')r=(tr')s$ with $ts'\in T_l(\gp)$ and $tr'\in R$, as required.

3. By the assumption, $T_l(\gp)\in \Den_l(R)$, and so $T_l(\gp)\in \Den_l(R, \ga_l(\gp))$. Recall that $$\s_\gp \pi_\gp(T_l(\gp))\subseteq Q_l(R/\gp)^\times.$$ By the universal property of localization, there is a homomorphism
  $$\iota_\gp: T_l(\gp)^{-1}R\ra Q_l(R/\gp), \;\; t^{-1}r\mapsto t^{-1}r.$$ 
Recall that $S_l(R/\gp)=\s_\gp^{-1}(Q_l(R/\gp)^\times)$ (Theorem \ref{4Jul10}.(1)) and $T_l(\gp)=\pi_\gp^{-1}(S_l(R/\gp))$ (see the definition of the set $T_l(\gp)$). The map $\pi_\gp: R\ra R/\gp$, $r\mapsto r+\gp$ is an epimorphism. Hence,
$$ \pi_\gp (T_l(\gp))=S_l(\gp),$$
and so the homomorphism $\iota_\gp$ is an epimorphism. By applying the localization functor $T_l(\gp)^{-1}$ (which is an exact functor) to the short exact sequence of left $R$-modules
$$ 0\ra \gp \ra R\ra R/\gp\ra 0,$$
we obtain the short exact sequences of left $T_l(\gp)^{-1}R$-modules
$$ 0\ra T_l(\gp)^{-1}\gp \ra T_l(\gp)^{-1}R \ra T_l(\gp)^{-1}(R/\gp)\simeq \Big(\pi_\gp(T_l(\gp)) \Big)^{-1}(R/\gp)=S_l(R/\gp)^{-1}(R/\gp)=Q_l(R/\gp)\ra 0,$$
i.e. $ 0\ra T_l(\gp)^{-1}\gp \ra T_l(\gp)^{-1}R \stackrel{\iota_\gp}{\ra} Q_l(R/\gp)\ra 0$. Hence, $\ker (\iota_\gp)= T_l(\gp)^{-1}\gp$ is an ideal of the ring $ T_l(\gp)^{-1}R $ such that 
$$ T_l(\gp)^{-1}R/T_l(\gp)^{-1}\gp\simeq Q_l(R/\gp).$$
The proofs of the statements (a) and (b) are complete. 

Finally, suppose that $\ga_l(\gp )=\gp$, i.e. $T_l(\gp)\in \Den_l(R, \gp)$. Then, by the statement (b), 
$$\s : R\ra T_l(\gp)^{-1}R\simeq Q_l(R/\gp), \;\; r\mapsto \frac{r}{1}.$$
By \cite[Lemma 3.3.(2)]{larglquot}, 
$$ Q_{l,\ga}(R)=S_{l,\ga}(R)^{-1}R=Q_l(R/\gp )\;\; {\rm and}\;\; S_{l,\ga}(R)=\pi_\gp^{-1}(S_l(R/\gp)).$$
By the definition of the set  $T_l(\gp)$, $\pi_\gp^{-1}(S_l(R/\gp))=T_l(\gp)$. The proof of the statement (c) is complete.
\end{proof}

{\bf Description of the set of minimal primes of a localization of a semiprime ring at a not necessarily regular denominator set.} An element $n$ of a ring $R$ is called a {\bf normal element} if $$Rn=nR.$$  
  Theorem \ref{28Sep23} is a generalization of Theorem \ref{A10Sep23}.(2,3) for left denominator sets that contain zero divisors. The general situation is described by  Proposition \ref{29Sep23} but the goal is to describe  a situation that has the `same' answer as in Theorem \ref{A10Sep23}.(2,3).

\begin{theorem}\label{28Sep23}
Let $R$ be a semiprime ring with $|\min (R)|<\infty$ and $S\in \Den_l(R, \ga)$. Then 
 $$\min (R, S):=\{ \gp\in \min (R)\, | \, \gp\cap S=\emptyset\}\neq \emptyset
 $$
 and statements 1 and 2 below are equivalent: 
\begin{enumerate}

\item The ring $S^{-1}R$ is a semiprime ring with $\min (S^{-1}R)=\{ S^{-1}\gp\, | \, \gp \in \min (R,S)\}$.

\item For each $\gp \in \min (R,S)$, $ S^{-1}\gp$ is an ideal of $S^{-1}R$ and the factor ring $S^{-1}R/S^{-1}\gp$
 is a prime ring.

\item  Suppose that  the  equivalent conditions 1 and 2  hold. Then 
\begin{enumerate}
\item If the left denominator set $S$ is generated by  normal elements of $R$ then 
  all ideals in the set $\min (S^{-1}R)$ are distinct, i.e. $|\min (S^{-1}R)|=|\min (R,S)|$.
  
\item If for each element $s\in S$ there is an element $t\in S$ such that the element $ts$ is a normal element of $R$ then  all ideals in the set $\min (S^{-1}R)$ are distinct, i.e. $|\min (S^{-1}R)|=|\min (R,S)|$.
\end{enumerate}
\end{enumerate}

\end{theorem}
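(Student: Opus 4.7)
The strategy is to dispatch the three assertions in order: non-emptiness of $\min (R,S)$; the equivalence $(1)\Leftrightarrow(2)$; and the distinctness statements (3a), (3b).

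For non-emptiness, suppose toward contradiction that $S\cap\gp\neq\emptyset$ for every $\gp\in\min (R)=\{\gp_1,\ldots,\gp_n\}$. Since $R$ is semiprime, the product of ideals satisfies $\gp_1\gp_2\cdots\gp_n\subseteq\bigcap_i\gp_i=0$. Picking $s_i\in S\cap\gp_i$ for each $i$, the element $s_1s_2\cdots s_n$ lies in $\gp_1\gp_2\cdots\gp_n=0$ and simultaneously in $S$ by multiplicative closure, contradicting $0\notin S$. The implication $(1)\Rightarrow(2)$ is immediate: if $\min (S^{-1}R)=\{S^{-1}\gp \, | \, \gp\in\min (R,S)\}$ then each $S^{-1}\gp$ is a minimal prime, hence an ideal, and $S^{-1}R/S^{-1}\gp$ is the quotient of a semiprime ring by one of its prime ideals, hence a prime ring.

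For $(2)\Rightarrow(1)$, observe that for $\gp\in\min (R)\setminus\min (R,S)$ one has $S^{-1}\gp=S^{-1}R$, while under (2) each $S^{-1}\gp$ with $\gp\in\min (R,S)$ is proper (else $S^{-1}R/S^{-1}\gp=0$ could not be prime). By exactness of the left localization functor on left $R$-modules,
$$\bigcap_{\gp\in\min (R,S)}S^{-1}\gp=\bigcap_{\gp\in\min (R)}S^{-1}\gp=S^{-1}\bigg(\bigcap_{\gp\in\min (R)}\gp\bigg)=S^{-1}(0)=0.$$
By (2) each $S^{-1}\gp$ is prime, so $S^{-1}R$ is semiprime. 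Any minimal prime $Q$ of $S^{-1}R$ contains $0\supseteq\prod S^{-1}\gp$, hence contains some $S^{-1}\gp$, and by minimality of $Q$ together with primality of $S^{-1}\gp$ we get $Q=S^{-1}\gp$. In particular $|\min (S^{-1}R)|\leq|\min (R,S)|<\infty$. The remaining task is to show that each $S^{-1}\gp$ is itself minimal, equivalently (by Lemma~\ref{b10Sep23} applied to the semiprime ring $S^{-1}R$) that the family $\{S^{-1}\gp \, | \, \gp\in\min (R,S)\}$ is irredundant.

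\textbf{The main obstacle} is this irredundancy step. Suppose $S^{-1}\gp'\subsetneq S^{-1}\gp$ for distinct $\gp,\gp'\in\min (R,S)$. Condition (2) combined with Proposition~\ref{A11Sep23}(1,3) forces $\bS:=\pi_\gp(S)$ to be a left denominator set of $\bR:=R/\gp$ and the natural map induces an isomorphism $S^{-1}R/S^{-1}\gp\simeq\bS^{-1}\bR$; contracting the strict inclusion via $\s_S$ yields $\pi_\gp(\gp')\subseteq\ass_l(\bS)$, so a nonzero ideal of the prime ring $\bR$ sits inside the kernel of $\bR\to\bS^{-1}\bR$. The plan is to couple this with the irredundancy of $\min (R)$: the ideal $J':=\bigcap_{\gq\in\min (R)\setminus\{\gp\}}\gq$ is nonzero (Lemma~\ref{b10Sep23} applied to $R$), and the containment assumption gives $\bigcap_{\gq\in\min (R,S)\setminus\{\gp\}}S^{-1}\gq=0$, hence $S^{-1}J'=0$, so $J'\subseteq\ass_l(S)$. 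Because $J'\not\subseteq\gp$ (otherwise primality of $\gp$ would force some $\gq\subseteq\gp$, contradicting minimality), $\pi_\gp(J')$ is a nonzero ideal of $\bR$ inside $\ass_l(\bS)$. The sharpest step --- and where I expect the real difficulty --- is to rule this out using primeness of $\bS^{-1}\bR$ together with the fact that $\ass_l(\bS)\cap\bS=\emptyset$ and $\ass_l(\bS)$ is a proper ideal of $\bR$.

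For statement 3 the crucial observation is: if $n\in R$ is normal and $n\notin\gp$ for a prime ideal $\gp$, then $nr\in\gp$ forces $r\in\gp$. Indeed, $RnrR=RnR\cdot RrR$ by normality (since $Rn=nR$ gives $RnRrR=n(RrR)$), $RnrR\subseteq\gp$, and $RnR\not\subseteq\gp$; primality of $\gp$ then yields $RrR\subseteq\gp$, i.e. $r\in\gp$. Since products of normal elements are normal, in case (3a) every element of $S$ is normal, and so for each $\gp\in\min (R,S)$ we obtain $\s_S^{-1}(S^{-1}\gp)=\gp$; therefore the map $\gp\mapsto S^{-1}\gp$ is injective on $\min (R,S)$, giving $|\min (S^{-1}R)|=|\min (R,S)|$. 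For case (3b), given $sr\in\gp$ with $s\in S$ (so $s\notin\gp$ because $S\cap\gp=\emptyset$), choose $t\in S$ with $ts$ normal; then $(ts)r=t(sr)\in\gp$ and $ts\notin\gp$, so the single-normal-element argument of (3a) applied to $ts$ gives $r\in\gp$, again producing $\s_S^{-1}(S^{-1}\gp)=\gp$ and the same distinctness conclusion.
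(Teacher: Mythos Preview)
Your treatment of non-emptiness, of the implication $(1)\Rightarrow(2)$, and of statement 3 is correct and essentially the same as the paper's. For statement 3 your formulation via $\sigma_S^{-1}(S^{-1}\gp)=\gp$ is a clean repackaging of the element-by-element argument the paper gives (the paper argues directly that $S^{-1}\gp=S^{-1}\gq$ forces $\gp\subseteq\gq$, using normality of $ts$ in exactly the way you describe).

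The problem is your treatment of $(2)\Rightarrow(1)$. After correctly obtaining
\[
0=\bigcap_{\gp\in\min(R,S)}S^{-1}\gp
\]
and deducing that $S^{-1}R$ is semiprime with $\min(S^{-1}R)\subseteq\{S^{-1}\gp\mid\gp\in\min(R,S)\}$, you then set out to prove that the full family $\{S^{-1}\gp\mid\gp\in\min(R,S)\}$ is \emph{irredundant}. That is too strong a target: irredundancy would force the $S^{-1}\gp$ to be pairwise distinct, which is precisely the content of statement 3 under extra normality hypotheses --- it cannot be a consequence of (2) alone. Your ``main obstacle'' is therefore self-imposed, and the detour through $\pi_\gp(S)$, $\ass_l(\bS)$, $J'$, and the claim that a nonzero ideal of the prime ring $R/\gp$ lies in $\ass_l(\bS)$ leads nowhere because there is no contradiction to be had there under (2).

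The paper's argument is much shorter: from the displayed intersection one simply passes to a subset $P\subseteq\min(R,S)$ for which $\bigcap_{\gp\in P}S^{-1}\gp=0$ is irredundant, and then Lemma~\ref{b10Sep23} gives $\min(S^{-1}R)=\{S^{-1}\gp\mid\gp\in P\}$. That is where the paper stops; the remark immediately following the theorem explicitly says that in statement 1 the ideals listed as $S^{-1}\gp$ are not assumed distinct, and the question of whether the map $\gp\mapsto S^{-1}\gp$ is injective on $\min(R,S)$ is deferred to statement 3. So replace your irredundancy paragraph with this one-line reduction and you are done.
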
 

{\em Remark}. In statement 1, not all ideals in the set $\min (S^{-1}R)$ are assumed distinct.

\begin{proof} Suppose that $\min (R, S)=\emptyset$, i.e. $\gp\cap S\neq \emptyset$ for all $\gp\in \min (R)$. Choose an element $s_\gp \in \gp\cap S\neq \emptyset$ for each $\gp\in \min (R)$. Then on the one hand $$0\neq s:=\prod_{\gp \in \min (R)}s_\gp\in S$$ where the product is taken in an arbitrary order. 
On the other hand,
$s\in \bigcap_{\gp \in \min (R)} \gp =0$ (since the ring $R$ is a semiprime ring), a contradiction. Therefore, $\min (R, S)\neq \emptyset$.

$(1\Rightarrow 2)$ Straightforward.

$(2\Rightarrow 1)$ By the assumption, for each $\gp \in \min (R,S)$, $ S^{-1}\gp$ is an ideal of $S^{-1}R$ and the factor ring $S^{-1}R/S^{-1}\gp$
 is a prime ring, i.e.  the ideal $ S^{-1}\gp$ is a  prime ideals of the ring $S^{-1}R$. Since the ring $R$ is a semiprime ring with $|\min (R)|<\infty$, we have that 
 $$\{0\} =S^{-1}\{0\}=S^{-1}\bigcap_{\gp \in \min (R)}\gp=\bigcap_{\gp \in \min (R)}S^{-1}\gp=
\bigcap_{\gp \in \min (R,S)}S^{-1}\gp=\bigcap_{\gp \in P }S^{-1}\gp$$
for some subset $P$ of $\min (R,S)$ such that the last  intersection is irredundant. By Lemma \ref{b10Sep23}, $\min (S^{-1}R)=\{ S^{-1}\gp \, | \, \gp \in P\}$.

3. Suppose that  the  equivalent conditions 1 and 2 of the theorem  hold.

(a) The statement (a) is a particular case of the statement (b). 

(b)    Suppose that $S^{-1}\gp=S^{-1}\gq$ for some ideals $\gp,\gq\in \min (R,S)$. We have to show that $\gp = \gq$. It suffices to show that $$\gp \subseteq \gq$$ (since then the inclusions $\gp, \gq \in \min (R)$  imply the equality $\gp = \gq$). For each element $p\in \gp $, $\frac{p}{1}=s^{-1}q$ for some element $q\in \gq$. Hence, $t(sp-q)=0$ for some element and $t\in S$, and so  $$ts\cdot p=tq\in \gq\;\; {\rm  and}\;\;ts\in S\backslash \gq$$ (since $S\cap \gq =\emptyset$).
By the assumption, there is an element $t'\in S$ such that the element $n:=t'ts\in S$ is a normal element of $R$. Now, the inclusion $np\in t'\gq\subseteq \gq$ implies the inclusion of ideals
$$(n) (p)\subseteq \gq$$ (since the element $n\in R$  is a normal element). Therefore, $(p)\subseteq \gq$ since $n\not\in \gp$, i.e.     
 $p\in \gq$ for all elements $p\in \gp$. This implies the inclusion  $\gp \subseteq \gq$, as required.  
\end{proof}

Corollary \ref{a28Sep23} describes the minimal primes of a localization of a semiprime commutative ring.

\begin{corollary}\label{a28Sep23}
Let $R$ be a semiprime commutative ring with $|\min (R)|<\infty$ and $S\in \Den (R, \ga)$. Then 
\begin{enumerate}

\item  The ring $S^{-1}R$ is a semiprime ring with $\min (S^{-1}R)=\{ S^{-1}\gp\, | \, \gp \in \min (R,S)\}$.

\item In statement 1,  all ideals in the set $\min (S^{-1}R)$ are distinct, i.e. $|\min (S^{-1}R)|=|\min (R,S)|$.
\end{enumerate}

\end{corollary}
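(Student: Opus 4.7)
The plan is to derive Corollary \ref{a28Sep23} as a direct consequence of Theorem \ref{28Sep23}, so the main task is to verify the hypotheses of that theorem in the commutative setting. Specifically, I need to check condition 2 of Theorem \ref{28Sep23} for every $\gp \in \min(R,S)$, and then verify the normality condition needed to invoke Theorem \ref{28Sep23}.(3)(a) for distinctness.

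For condition 2, the verification splits into two parts. The ideal part is immediate: since $R$ is commutative, the ring $S^{-1}R$ is also commutative, so every left ideal of $S^{-1}R$ is automatically a two-sided ideal, and in particular $S^{-1}\gp$ is an ideal. For the primeness of the factor ring, I plan to show that $S^{-1}R/S^{-1}\gp$ is a domain by a direct elementwise computation: given $(r_1/s_1)(r_2/s_2) \in S^{-1}\gp$, clearing denominators produces an element $u \in S$ with $u r_1 r_2 s \in \gp$ for appropriate $s \in S$; since $\gp \cap S = \emptyset$ forces $us \notin \gp$, and $\gp$ is prime in the commutative ring $R$, we conclude $r_1 \in \gp$ or $r_2 \in \gp$, hence $(r_i/s_i) \in S^{-1}\gp$ for some $i$. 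Properness of $S^{-1}\gp$ also follows from $\gp \cap S = \emptyset$.

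Once condition 2 is verified, Theorem \ref{28Sep23} gives statement 1 of the corollary. For statement 2, the distinctness of the minimal primes in $\min(S^{-1}R)$, I would invoke Theorem \ref{28Sep23}.(3)(a): in a commutative ring, every element $n$ trivially satisfies $Rn = nR$, so every element of $S$ is normal, and the conclusion $|\min(S^{-1}R)| = |\min(R,S)|$ follows immediately.

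There is essentially no obstacle here — the corollary is a clean specialization where both the ideal-preservation issue and the normality issue evaporate because of commutativity. The only small bookkeeping is that Theorem \ref{28Sep23} a priori allows repetition in the listing of $\min(S^{-1}R)$ in statement 1, which is exactly what statement 3(a) eliminates; this is why I separate the two conclusions and invoke 3(a) only after establishing statement 1.
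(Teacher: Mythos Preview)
Your proposal is correct and follows essentially the same approach as the paper: verify condition 2 of Theorem \ref{28Sep23} using commutativity (ideal property is automatic, and the factor ring is a domain), then invoke Theorem \ref{28Sep23}.(3a) for distinctness since every element of a commutative ring is normal. The only cosmetic difference is that the paper establishes primeness of $S^{-1}R/S^{-1}\gp$ via the isomorphism $S^{-1}R/S^{-1}\gp \simeq S^{-1}(R/\gp)$ (localization of a domain is a domain), whereas you do the equivalent elementwise verification directly.
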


\begin{proof} 1. Since the ring $R$ is a commutative ring, for each $\gp \in \min (R,S)$, $ S^{-1}\gp$ is an ideal of $S^{-1}R$ and the factor ring $$S^{-1}R/S^{-1}\gp\simeq S^{-1}(R/\gp)$$
 is a prime ring since the factor ring $R/\gp$ is a domain (hence so is its localization $S^{-1}(R/\gp)$). Therefore, statement 2 of Theorem \ref{28Sep23} holds, and so statement 1 follows from Theorem \ref{28Sep23}.
 
 2.  Statement 2 follows from Theorem \ref{28Sep23}.(3a).
\end{proof}

 An ideal $\gp$ of a ring $R$ is called a {\bf completely prime ideal} of $R$ if the factor ring $R/\gp$ is a domain. Every completely prime ideal is a prime ideal  but not vice versa, in general. We denote by $\Spec_c(R)$ be the set of all completely prime ideals of $R$. Notice that $\Spec_c(R)\subseteq \Spec (R)$.
 

\begin{proof} {\bf (Proof of Proposition \ref{b28Sep23})}   1.    By the assumption, for each $\gp \in \min (R,S)$,  $\gp \in \Spec_c(R)$ and 
$ S^{-1}\gp$ is an ideal of $S^{-1}R$. So,  the ring $R/\gp$ is a domains,  and  so is  the factor ring 
$$S^{-1}R/S^{-1}\gp\simeq S^{-1}(R/\gp).$$
 Therefore, statement 2 of Theorem \ref{28Sep23} holds, and so statement 1 follows from Theorem \ref{28Sep23}. Clearly, $S^{-1}\gp\in \Spec_c(S^{-1}R)$ (since the factor ring $S^{-1}R/S^{-1}\gp\simeq S^{-1}(R/\gp)$ is a domain). 
 
 2.  Suppose that $S^{-1}\gp=S^{-1}\gq$ for some ideals $\gp,\gq\in \min (R,S)$. We have to show that $\gp = \gq$. It suffices to show that $\gp \subseteq \gq$. For each element $p\in \gp $, $\frac{p}{1}=s^{-1}q$ for some element $q\in \gq$. Hence, $t(sp-q)=0$ for some element and $t\in S$, and so  $$ts\cdot p=tq\in \gq\;\; {\rm  and}\;\;ts\in S\backslash \gq$$ (since $S\cap \gq =\emptyset$). Therefore, $p\in \gq$ for all elements $p\in \gp$,    i.e. $\gp \subseteq \gq$ (since the factor ring $R/\gq$ is a domain), as required. 
\end{proof}


\section{ Minimal primes of localizations of rings at multiplicative sets generated by normal elements} \label{MINPTNORM} 

Let $R$ be a ring and $S$ be a multiplicative subset of $R$ which is generated by normal elements of $R$. The goal of the section is to prove Theorem \ref{A2Oct23} that describes the set of minimal primes of the ring $\RSm$. In general, the ring $\RSm$ is not a semiprime ring but precisely  the same result as Theorem \ref{A10Sep23} holds for it but without the `semiprimeness' assumption, see Theorem \ref{A2Oct23}.(4a). Lemma \ref{a5Oct23} is a generalization  of Theorem \ref{A2Oct23} to a more general situation. \\

{\bf Description of minimal primes of localizations of rings at multiplicative sets generated by normal elements.} Let $R$ be a ring,   $S$ be a non-empty subset of $R$,  $R\langle X_S\rangle$ be a ring freely generated by the ring $R$ and a set $X_S=\{ x_s\, | \, s\in S\}$ of free noncommutative indeterminates (indexed by the elements of the set $S$). Let $I_S$ be the  ideal of $R\langle X_S\rangle$  generated by the set  $\{ sx_s-1, x_ss-1 \, | \, s\in S\}$. The factor  ring  
\begin{equation}\label{IRbSbm}
\RSm := R\langle X_S\rangle/ I_S.
\end{equation}
 is called the {\em localization of $R$ at} $S$. 
Let $\ass (S) = \ass_R(S)$ be the  kernel of the ring homomorphism
\begin{equation}\label{IRbSbm1}
\s_S: R\ra \RSm , \;\; r\mapsto r+ I_S.
\end{equation}
  The factor ring $\bR :=R/\ass_R(S)$ is a subring of $\RSm$ and the map $\pi_S:R\ra \bR$, $r\mapsto \br := r+\ass_R(S)$ is   an epimorphism.  The  ideal $\ass_R(S)$ of $R$ has a sophisticated  structure, its description is given in \cite[Proposition 2.12]{LocSets} when  $$\RSm =\{ \bs^{-1}\br\, | \, s\in S, r\in R\}$$ is a ring of left fractions.   
 There is an example of a domain $R$ and a finite set $S$ such that $\ass (S)=R$, i.e. $\RSm =\{ 0\}$  \cite[Exercises 9.5]{Lam-Exbook}.

\begin{definition} (\cite{LocSets}) A multiplicative set $S$ of a ring $R$ is called a {\bf left localizable set}  if  
$$\RSm = \{ \bs^{-1} \br \, | \, \bs \in \bS, \br \in \bR\}\neq \{ 0\}$$
where $\bR = R/ \ga$,  $\ga = \ass_R(S)$ and $\bS = (S+\ga ) / \ga$, i.e., every element of the ring $\RSm$ is a left fraction $\bs^{-1} \br$ for some elements  $\bs \in \bS$ and $ \br \in \bR$. Similarly,  a multiplicative set $S$ of a ring $R$ is called a {\bf right  localizable set}  of $R$  if 
$$\RSm = \{  \br \bs^{-1}\, | \, \bs \in \bS, \br \in \bR\}\neq \{ 0\}, $$
 i.e., every element of the ring $\RSm$ is a right  fraction $ \br\bs^{-1}$ for some elements  $\bs \in \bS$ and $ \br \in \bR$. A right and left localizable set of $R$ is called a {\bf localizable set} of $R$.
 \end{definition}
 Every left denominator set $S\in \Den_l(R)$ is a left localizable set and $\RSm \simeq S^{-1}R$.

Let $R$ be a ring and $S$ be a multiplicative subset of $R$ which is generated by normal elements of $R$. Clearly,  $S\in \Ore (R)$ is an  Ore set of $R$. By Theorem \ref{10Jan19-st1,2}, $S$ is a localizable set of $R$ such that  $\ass_R(S)=\ga $ and $\RSm \simeq \bS^{-1}\bR$. Wen keep the notation of Theorem \ref{10Jan19-st1,2}. Let  $$\widetilde{\pi}: \bR\ra \widetilde{R}:=\bR/  \gn_{\bR}, \;\;\br\mapsto \widetilde{r}=\br +\gn_{\bR}$$ where $\gn_{\bR}$ is the prime radical of the ring $\bR$. There are bijections
$$\min (\ga)\stackrel{\pi}{\ra} \min (\bR)\stackrel{\widetilde{\pi}}{\ra}\min(\widetilde{R}), \;\;  \gp\mapsto \bgp \mapsto \widetilde{\overline{\gp}}.$$
Let $\min (\ga , S):=\{ \gp \in \min (\ga)\, | \, \gp\cap S =\emptyset\}$.

An element $a\in R$ is called {\bf strongly nilpotent} if any sequence $a=a_0,a_1, a_2, \ldots$ such that $a_{i+1}\in a_iRa_i$ is ultimately zero. The prime radical $\gn_R$ of $R$ is precisely the set of strongly nilpotent elements of $R$, \cite[Theorem 0.2.2]{MR}. In particular, the prime radical $\gn_R$ is {\bf nil}, i.e. all elements of $\gn_R$ are nilpotent elements.

\begin{proof} {\bf (Proof of Theorem \ref{A2Oct23})}   1--3.  By Theorem \ref{10Jan19-st1,2}.(2), $\bS\in \Den (\bR,0)$ and $\RSm\simeq \bS^{-1}\bR$. Hence, for each element $\bs\in \bS$, the map 
$$\o_{\bs}: \bR\ra \bR, \;\; \br\mapsto \o_{\bs}(\br)\;\; {\rm where}\;\; \bs\br =\o_{\bs}(\br ) \bs,$$
is an automorphism. 

(i) {\em For all $\bs\in \bS$, $\o_{\bs}(\gn_{\bR})=\gn_{\bR}$}: The statement (i) follows from the fact that $\o_{\bs}\in \Aut (\bR)$.

(ii) $\bS\cap \gn_{\bS}=\emptyset$: Each element  $\bs\in \bS$ is a regular element of $R$. In particular, it is not a nilpotent element. Since the prime radical $\gn_{\bS}$ is nil, we have that $\bS\cap \gn_{\bS}=\emptyset$.

(iii) {\em $\tbS\in \Den (\widetilde{R},0)$ and $\o_\xi\in \Aut (\widetilde{R})$ for all} $\xi \in \tbS$: By the statement (ii), the subset $\tbS$ of $\widetilde{R}$  is a multiplicative set. By the statement (i),  the set $\tbS$   is generated by regular normal elements. 
 Hence,  $\o_\xi\in \Aut (\tbR )$ for all $\xi \in \tbS$ and  $\tbS\in \Den (\tbR,0)$.

(iv) {\em The map $\min (\widetilde{R})\ra \min (\tbS^{-1}\widetilde{R})$, $\tbgp\mapsto \tbS^{-1}\tbgp$ is a bijection}: By the definition, the ring $\widetilde{R}$ is a semiprime ring with $$|\min (\widetilde{R})|=|\min (\ga)|<\infty, \;\;  {\rm and}\;\;  \tbS\in \Den (\widetilde{R},0)$$
 (the statement (iii)). Now, the result follows from Theorem \ref{A10Sep23}.(2).

(v) {\em For all  $\bgp\in \min (\bR)$, $\bS\cap \bgp = \emptyset$}: The result follows from the statement (iv).

(vi) {\em The map $\min (\bR)\ra \{ \bS^{-1}\bgp\, | \, \gp\in \min (\bR)\}$, $\bgp\mapsto \bS^{-1}\bgp$ is a bijection, $\bS^{-1}\bgp\neq \bS^{-1}R$ and $\bS\cap \bgp=\emptyset$}: The result follows from the statement (iv). 

Notice that $|\min (\bR )|=|\min (a)|<\infty$.

(vii) {\em There is a natural number $n\geq 1$ such that $\o_{\bs}^n(\bgp)= \bgp$ for all elements  $\bs\in \bS$ and $\bgp\in \min (\bR)$}: Let $G$ be the subgroup of $\Aut (R)$ which is generated by the automorphisms $\o_{\bs}$, where $\bs \in \bS$. The group $G$ acts on the finite set $\min (\bR)$ in the obvious way. The image of the group $G$, say $G'$, in the symmetric group of the finite set $\min (\bR)$ is a finite group. So, it suffices to take $n=|G'|$, the order of the group $G'$.

(viii) {\em For all $\bgp\in \min (\bR)$ and $\bs\in \bS$, $ \bgp\bs^{-1} \subseteq \bs^{-n}\bgp$, where $n$ is as in the statement (vii), and  the left ideal $\bS^{-1}\bgp$ of the ring $\bS^{-1}\bR$ is an ideal}: The second part of the statement (viii) follows from the first (since then $\gp\bs^{-1}\subseteq \bS^{-1}\gp$ for all element $\bs \in \bS$). Now, 
$$ \bgp\bs^{-1}=\bs^{-n}\cdot \bs^n\bgp\bs^{-n}\cdot \bs^n\bs^{-1}=\bs^{-n}\o_{\bs}^n(\bgp) \bs^{n-1}=\bs^{-n}\bgp \bs^{n-1}\subseteq \bs^{-n}\bgp .$$
(ix) {\em For all $\bgp\in \min (\bR)$,  $\bS^{-1}\bgp\in \Spec (\bS^{-1}\bR$)}:  By statement (viii), $\bS^{-1}\bgp$ is an ideal of the ring $\bS^{-1}\bR$. Furthermore,  $\bS^{-1}\bgp\neq \bS^{-1}\bR$, by the statement (vi). The ring $\bR/\bgp$ is a prime ring. Hence so is its localization
$$ \bS^{-1}(\bR/\bgp)\simeq \bS^{-1}\bR/\bS^{-1}\bgp ,$$
by Lemma \ref{a10Sep23}. Therefore, $\bS^{-1}\bgp\in \Spec (\bS^{-1}\bR)$.

(x) {\em $\gn_{\bS^{-1}\bR}\subseteq \bS^{-1}\gn_{\bR}$ and $\bS^{-1}\gn_{\bR}$ is an ideal of $\bS^{-1}\bR$}: By the statement (ix) and $|\min (\bR)|<\infty$, $$\gn_{\bS^{-1}\bR}=\bigcap_{\gq\in \Spec(\bS^{-1}\bR)}\gq\subseteq \bigcap_{\bgp\in \min (\bR)}\bS^{-1}\bgp=\bS^{-1}\bigg(\bigcap_{\bgp\in \min (\bR)}\bgp\bigg)=\bS^{-1}\gn_{\bR}$$
and the intersection of ideals of $\bS^{-1}\bR$, $\bS^{-1}\gn_{\bR}=\bigcap_{\bgp\in \min (\bR)}\bS^{-1}\bgp$,  is an ideal of $\bS^{-1}\bR$, see the statement (viii).

(xi)  $\bS^{-1}\bR/\bS^{-1}\gn_{\bR}\simeq
\widetilde{\bS}^{-1}\widetilde{R}$: By the statement (x), $\bS^{-1}\gn_{\bR}$ is an ideal of the ring $\bS^{-1}\bR$. Now, by the statement (iii), $\tbS\in \Den (\widetilde{R},0)$, and so 
$$\bS^{-1}\bR/\bS^{-1}\gn_{\bR}\simeq \bS^{-1}(\bR/\gn_{\bR})\simeq
\widetilde{\bS}^{-1}\widetilde{R}.
$$

(xii) {\em For all} $\bgp\in \min (\bR )$,  $\bR\cap \bS^{-1}\bgp=\bgp$: Clearly, $\bgp\subseteq \bR\cap \bS^{-1}\bgp$. Suppose that $\Big(\bR\cap \bS^{-1}\bgp\Big)\backslash \bgp\neq \emptyset$.  Fix an element $\br\in\Big( \bR\cap \bS^{-1}\bgp\Big)\backslash \bgp$. Then $\bs\br\in \bgp$ for some element $\bs\in \bS$. 
The element $\bs$ is a normal element of the ring $\bR$ and $\bs\not \in \gp$, by the statement (vi). Therefore, we have the inclusion of ideals $$(\bs)(\br)\subseteq  \bgp$$ which implies the inclusion $(\br)\subseteq \bgp$ (since $\bs\not\in \bgp$ and $\bgp$ is a prime ideal of the ring $\bR$), and so $\br\in \bgp$, a contradiction. 

(xiii) {\em The map 
$\min (\ga)\ra \Spec (\RSm)=\Spec (\bS^{-1}\bR)$, $\gp \mapsto \bS^{-1}\bgp $ is an injection}: 
 The result follows from the statements (ix) and (xii) (if $ \bS^{-1}\bgp = \bS^{-1}\bgp'$ for some $\gp, \gp'\in \min (\gq)$ then, by the statement (xii),  $$\bgp = \bR\cap \bS^{-1}\bgp = \bR\cap \bS^{-1}\bgp' =\bgp',$$
  and so $\gp = \gp'$). 
 
(xiv) {\em   For each $\gp \in \min (\ga )$, $\pi_\gp (S)\in \Den(R/\gp, 0)$}: By the statement (v) and (vii), $$\bS\cap \bgp=\emptyset\;\;{\rm and}\;\;\o_{\bs}(\bgp) = \bgp.$$ Hence, the subset $\pi_\gp(S)$ of $R/\gp$ is a multiplicative set that is generated by regular normal elements, and so $\pi_\gp (S)\in \Den(R/\gp, 0)$.

(xv) {\em For each $\gp \in \min (\ga )$, $\pi_\gp (S)^{-1}(R/\gp)\simeq \widetilde{\bS}^{-1}\widetilde{R}/\widetilde{\bS}^{-1}\widetilde{\bgp}\simeq \bS^{-1}\bR/\bS^{-1}\bgp$}:   By the statements  (iii),  (viii) and (xiv),
$$  \bS^{-1}\bR/\bS^{-1}\bgp\simeq \bS^{-1}(\bR/\bgp)\simeq \bS^{-1}(\widetilde{R}/\tbgp)\simeq \widetilde{\bS}^{-1}\widetilde{R}/\widetilde{\bS}^{-1}\widetilde{\bgp}\simeq \pi_\gp (S)^{-1}(R/\gp).
$$

The commutativity of the diagram is obvious.

4. (b) (xvi) $\gn_{\bS^{-1}\bR}=\bS^{-1}\gn_{\bR}$:
 By the assumption, the ideal $\gn_{\bR}=\bigcap_{\gp \in \min (\ga)}\bgp$ is a nilpotent ideal, i.e. $\gn_{\bR}^m=0$ for some natural number $m\geq 1$. Since the left ideal $\bS^{-1}\gn_{\bR}$ of the ring $\bS^{-1}\bR$ is an ideal (the statement (x)), 
$$\Big(\bS^{-1}\gn_{\bR}\Big)^m=\bS^{-1}\gn_{\bR}^m=0,$$
and so $\bS^{-1}\gn_{\bR}\subseteq \gn_{\bS^{-1}\bR}$. By the statement (x), we have the opposite inclusion, and the statement (xvi)  follows.

(xvii) $\bS^{-1}\bR/\bS^{-1}\gn_{\bR}\simeq
\bS^{-1}\bR/\gn_{\bS^{-1}}\bR$: The statement (xvii) follows from the statement (xvi).

The proof of the statement (b) is complete (in view of statement 3).

(a) (xviii) {\em The map $\min (\bR )\ra \min(\bS^{-1}\bR)$, $\bgp \mapsto \bS^{-1}\bgp $ is a bijection}:  By the statement  (xvi),
 $$\gn_{\bS^{-1}\bR}=\bS^{-1}\gn_{\bR}=\bS^{-1}\bigg(\bigcap_{\bgp\in \min (\bR)}\bgp\bigg)=\bigcap_{\bgp\in \min (\bR)}\bS^{-1}\bgp .
 $$
By the statements (vi) and (ix), the prime ideals 
$\{ \bS^{-1}\bgp\, | \, \bgp\in \min (\bR)\}$ of the ring $\bS^{-1}\bR$ are distinct. The second intersection above is irredundant since otherwise
$$\bS^{-1}\bgp'\subseteq \bigcap_{\bgp\in \min (\bR)\backslash \{ \bgp'\}}\bS^{-1}\bgp \subseteq \bgp.
 $$ for some $\bgp'\in \min (\bR)$. By the statement (xii),
 $$\bgp'=\bR\cap \bS^{-1}\bgp'\subseteq \bR\cap \bS^{-1}\bgp=\bgp,$$
a contradiction (since $\min (\bR)=\{ \bgp\, | \, \gp\in \min (\ga)\})$. Since the second intersection is iredundant, the statement (xviii) follows from Lemma \ref{b10Sep23}.
\end{proof}

Lemma \ref{a5Oct23} extends the results of Theorem \ref{A2Oct23} to a more general situation. 

\begin{lemma}\label{a5Oct23}
Let $R$ be a ring and $S'\in \Den_l(R, \ga)$. Suppose that for each element $s\in S'$ there is an element $t\in S'$ such that the element $ts$ is a normal element of $R$. Let $S$ be the set of all normal elements in $S'$. Then 

\begin{enumerate}
\item $S\in \Den_l(R, \ga)$ and $S^{-1}R\simeq S'^{-1}R$.
\item Theorem \ref{A2Oct23} holds for the denominator set $S'\in \Den_l(R, \ga)$.

\end{enumerate}
\end{lemma}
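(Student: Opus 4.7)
The plan is to reduce Lemma \ref{a5Oct23} directly to Theorem \ref{A2Oct23} by showing that the subset $S \subseteq S'$ of normal elements is itself a left denominator set whose localization coincides with $S'^{-1}R$; once this is done Theorem \ref{A2Oct23} applies to $S$ by construction, and the conclusions transport to $S'$ via the identification $S^{-1}R\simeq S'^{-1}R$.

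For Part 1, I would first check that $S$ is multiplicative: $1\in S$ (as $1$ is normal and $1\in S'$) and the product of two normal elements is normal. The left Ore condition for $S$ is deduced from the one for $S'$ combined with the cofinality hypothesis: given $r\in R$ and $s\in S$, the left Ore condition for $S'$ gives $s_1 r = r_1 s$ with $s_1\in S'$, $r_1\in R$; pick $t\in S'$ with $ts_1\in S$, and rewrite $(ts_1)r = (tr_1)s$ with $ts_1\in S$. An analogous absorption argument handles the left reversibility condition ($rs=0$ implies $s_1 r=0$ for some $s_1\in S'$, hence $(ts_1)r=0$ for a suitable $t$). The inclusion $\ass_l(S)\subseteq \ass_l(S')=\ga$ is immediate, and the reverse inclusion again uses cofinality to absorb any $S'$-annihilator into $S$. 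Hence $S\in \Den_l(R,\ga)$. The natural map $S^{-1}R\to S'^{-1}R$ induced by $S\hookrightarrow S'$ is surjective since every $s'^{-1}r\in S'^{-1}R$ can be rewritten, using $ts'\in S$ from the hypothesis, as $(ts')^{-1}(tr)$; injectivity is automatic because both localizations have the same associated ideal $\ga$ and satisfy the universal property of localization at $S$.

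For Part 2, the set $S$ consists (by definition) of normal elements, so it satisfies the hypothesis of Theorem \ref{A2Oct23} for the ideal $\ga$ and, assuming $|\min(\ga)|<\infty$, Theorem \ref{A2Oct23} applies to $S$ verbatim. The $R$-algebra isomorphism $S^{-1}R\simeq S'^{-1}R$ from Part 1 (and correspondingly $\bS^{-1}\bR\simeq \bS'^{-1}\bR$ on the semiprime quotient level) then transports every conclusion of Theorem \ref{A2Oct23} from $S$ to $S'$: the injection $\min(\ga)\to \Spec(R\langle S'^{-1}\rangle)$, the inclusion $\gn_{\bS'^{-1}\bR}\subseteq \bS'^{-1}\gn_{\bR}$, the bijection $\min(\ga)\to \min(\widetilde{\bS'}^{-1}\widetilde{R})$, and the commutative diagram of Theorem \ref{A2Oct23}.(3) with $\bS,\widetilde{\bS}$ replaced by $\bS',\widetilde{\bS'}$. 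The only auxiliary statement requiring separate verification is that $\pi_\gp(S')\in \Den(R/\gp,0)$ for $\gp\in \min(\ga)$ and that $\pi_\gp(S')^{-1}(R/\gp)\simeq \pi_\gp(S)^{-1}(R/\gp)$; but the cofinality property passes through $\pi_\gp$ (for any $\pi_\gp(s)\in \pi_\gp(S')$ one has $\pi_\gp(ts)\in \pi_\gp(S)$ for a suitable $t\in S'$), so the very argument of Part 1 applies with $R$ replaced by $R/\gp$.

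The main obstacle I anticipate is purely bookkeeping: Theorem \ref{A2Oct23} describes the situation through a chain of intermediate rings $R\to \bR\to \widetilde{R}$ and their localizations, and one must be careful at each arrow of the commutative diagram in \ref{A2Oct23}.(3) to invoke the isomorphism $\bS^{-1}\bR\simeq \bS'^{-1}\bR$ so that the square lifts cleanly to $S'$. No new ideas beyond the cofinality trick are needed.
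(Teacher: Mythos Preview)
Your proposal is correct and follows essentially the same route as the paper: establish that $S$ is a left denominator set with $\ass_l(S)=\ga$ and $S^{-1}R\simeq S'^{-1}R$ via the cofinality hypothesis, then invoke Theorem \ref{A2Oct23} for $S$ and transport the conclusions to $S'$. The one simplification the paper makes that you miss is that the left Ore condition for $S$ is automatic---a multiplicative set of normal elements is always an Ore set, since $sr\in sR=Rs$ for every normal $s$---so there is no need to derive it from the Ore condition for $S'$.
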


\begin{proof} 1. As a product of normal elements is a normal element and $S\subseteq S'$, the set $S$ is a multiplicative set of $R$. The set $S$ consists of normal elements. So, $S\in \Ore (R)$. Let $\pi : R\ra \bR :=R/\ga$, $r\mapsto \br := r+\ga$ and $\bS':=\pi (S')$. Then 
$$\bS':=\pi (S')\in \Den_l(\bR , 0)\;\; {\rm  and}\;\;S'^{-1}R\simeq \bS'^{-1}\bR.$$

(i) $\ass_l(S)=\ga$: Since $S\subseteq S'$, $\ass_l(S)\subseteq \ass_l(S')$. The reverse inclusion follows from the fact that  for each element $s\in S'$ there is an element $t\in S'$ such that the element $ts$ is a normal element of $R$. Therefore, $\ass_l(S)= \ass_l(S')=\ga$. 
 
(ii) $S\in \Den_l(R, \ga)$: In view of  the statement (i), it remains to show that if $rs=0$ for some elememts $s\in S$ and $r\in R$ then $tr=0$ for some element  $t\in S$. Since $S\subseteq S'$ and  $S'\in \Den_l(R, \ga)$, there is an element $s'\in S'$ such that $s'r=0$. Choose an element $t'\in S'$ such that the element $t:=t's'$ is a normal element of $R$, i.e. $t\in S$,  and so  $tr=t's'r=0$, as required.

(iii) $S^{-1}R\simeq S'^{-1}R$: By the statement (ii) and the inclusion $S\subseteq S'$, there is a monomorphism
$$S^{-1}R\ra S'^{-1}R, \;\; s^{-1}r\mapsto s^{-1}r.$$
By the assumption, for each element $s'\in S'$,  there is an element $t'\in S'$ such $t:=t's'$ is a normal element of the ring $R$, i.e. $t\in S$. Then for each $r\in R$, 
$$s'^{-1}r=t^{-1}t'r\in S^{-1}R,$$
i.e. the monomorphism is a surjection, and the statement (iii) follows. 

2. Statement 2 follows from statement 1 and Theorem \ref{A2Oct23}.
\end{proof}


\section{ Minimal primes of a semiprime ring and of its centre} \label{MINPZR} 

 In this section, proofs of Corollary  \ref{aB25Sep23} and Proposition \ref{B25Sep23} are given. Recall that Corollary  \ref{aB25Sep23} is a criterion for the  map $\rho_{R,\min}: \min (R)\ra \min (Z(R))$, $ \gp\mapsto \gp\cap Z(R)$ being a  well-defined map and  Proposition \ref{B25Sep23} is a criterion for the map $\rho_{R,\min}$  being  a surjection.\\

 Let $R$ be a ring and $Z(R)$ be its centre. For each prime ideal $\gq\in \Spec (Z(R))$, the field
$$ k(\gq):=Z(R)_\gq/\gq_\gq\simeq \Big( Z(R)/\gq\Big)_\gq $$
is called the {\bf residue field} of the prime ideal $\gq$. By applying the right exact functor $-\t_{Z(R)}R$ to the short exact sequence of $Z(R)$-modules 
$$0\ra \gq_\gq\ra Z(R)_\gq\ra k(\gq)\ra 0$$
we obtain an  exact sequence of right and left $R$-modules 
$$\gq_\gq\t_{Z(R)}R\ra Z(R)_\gq\t_{Z(R)}R\simeq R_\gq \ra k(\gq)\t_{Z(R)}R\ra 0.$$
Therefore, 
\begin{equation}\label{RqqkZ}
R_\gq/R_\gq \gq\simeq k(\gq)\t_{Z(R)}R
\end{equation}
 is an isomorphism of rings.  Let $\gq\in \Spec (Z(R))$ and $\gp \in \Spec (R)$. Then  $R/\gp$ is a prime ring. By Lemma  \ref{a10Sep23}, the ring $\Big(R/\gp\Big)_\gq\simeq R_\gq/\gp_\gq$ is a prime ring. Therefore, 
 the map 
\begin{equation}\label{RqqkZ-1}
\Spec (R)\ra \Spec(R_\gq), \;\;\gp \mapsto \gp_\gq
\end{equation}
  is a well-defined map.

\begin{proposition}\label{A25Sep23}
Let $R$ be a  ring and $Z(R)$ be its centre. Then 
\begin{enumerate}

\item Let  $\gq\in \Spec (Z(R))$. Then $\gq \in \im (\rho_R)$ iff $R_\gq\neq R_\gq\gq$.

\item For each $\gq\in \Spec (Z(R))$, the maps 
\begin{eqnarray*}
\{\gp\in \Spec (R)\, | \, \gp\cap Z(R)=\gq\}&\ra& V_{R_\gq}(R_\gq \gq)\ra \Spec (k(\gq)\t_{Z(R)}R),  \\
Q&\mapsto & \;\;\;\;\; Q_\gq \;\;\;\;\;\mapsto Q_\gq/R_\gq \gq\simeq k(\gq)\t_{Z(R)}Q,
\end{eqnarray*}
 are bijections where $V_{R_\gq}(R_\gq \gq):=\{ P\in \Spec(R_\gq)\, | \, R_\gq \gq \subseteq P\}$.

\item For each $\gq \in \im (\rho_R)$, there is a  minimal prime $\gp \in \min (R)$ such that $\gp\cap Z(R)=\gq$.

\end{enumerate}
\end{proposition}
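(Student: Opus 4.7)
The strategy is to prove statement 2 first, from which statements 1 and 3 follow. The essential tool throughout is the central localization $\sigma \colon R \to R_\gq$ at the central multiplicative set $Z(R) \setminus \gq$, whose ideal-theoretic correspondences are particularly well-behaved because the denominator set lies in the centre.

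For statement 2, I would verify the first bijection in both directions. Given $Q \in \Spec(R)$ with $Q \cap Z(R) = \gq$, the disjointness $Q \cap (Z(R) \setminus \gq) = \emptyset$ and standard central-localization theory (already invoked via (\ref{RqqkZ-1})) yield a proper prime $Q_\gq$ of $R_\gq$ with $\sigma^{-1}(Q_\gq) = Q$, and the inclusion $\gq \subseteq Q$ forces $R_\gq \gq \subseteq Q_\gq$. Conversely, for $P \in V_{R_\gq}(R_\gq \gq)$ set $Q := \sigma^{-1}(P)$: the containment $R_\gq \gq \subseteq P$ gives $\gq \subseteq Q \cap Z(R)$, while any $z \in Z(R) \setminus \gq$ has unit image $z/1 \in R_\gq^\times$ and therefore cannot lie in $P$, forcing $Q \cap Z(R) \subseteq \gq$. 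The second bijection with $\Spec(k(\gq) \otimes_{Z(R)} R)$ is then the composition of the standard quotient correspondence $V_{R_\gq}(R_\gq \gq) \simeq \Spec(R_\gq / R_\gq \gq)$ with the ring isomorphism (\ref{RqqkZ}).

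Statement 1 is an immediate consequence of statement 2. The implication $(\Rightarrow)$ is direct: a prime $\gp$ contracting to $\gq$ gives, via statement 2, a proper prime $\gp_\gq \in V_{R_\gq}(R_\gq \gq)$, witnessing $R_\gq \gq \neq R_\gq$. For $(\Leftarrow)$, if $R_\gq \gq \neq R_\gq$ then the nonzero ring $R_\gq / R_\gq \gq$ has a maximal (in particular prime) ideal; lifting it along the correspondence of statement 2 produces a prime of $R$ contracting to $\gq$.

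Statement 3 is the most delicate part. My plan is as follows: given $\gq \in \im(\rho_R)$, statement 1 produces a prime $P \in \Spec(R)$ with $P \cap Z(R) = \gq$; selecting a minimal prime $\gp \in \min(R)$ with $\gp \subseteq P$ yields the one-sided inclusion $\gp \cap Z(R) \subseteq \gq$ immediately. The main obstacle is securing the reverse inclusion $\gq \subseteq \gp$, which does not follow from minimality of $\gp$ alone. The route I would pursue is to refine the choice: instead of picking $\gp$ first, apply Zorn's lemma to the (nonempty, by statement 1) set of primes of $R$ contracting to $\gq$, partially ordered by reverse inclusion (using that the intersection of a descending chain of primes is prime) to produce a prime $\gp$ that is \emph{minimal} among primes contracting to $\gq$, and then argue by contradiction that this $\gp$ must in fact be a minimal prime of $R$. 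Any putative strict sub-prime $\gp_0 \subsetneq \gp$ satisfies $\gp_0 \cap Z(R) \subsetneq \gq$, and the technical heart of the proof will be using the minimality over $\gq$ together with the transfer afforded by statement 2 (applied in $R_\gq$) to derive the contradiction.
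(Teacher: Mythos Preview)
Your treatment of statements 1 and 2 is essentially the paper's: prove 2 first by the standard contraction/extension correspondence for the central denominator set $Z(R)\setminus\gq$ together with the isomorphism $R_\gq/R_\gq\gq\simeq k(\gq)\otimes_{Z(R)}R$, and read off 1.

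For statement 3 you have correctly located a genuine difficulty, but your proposed Zorn argument cannot be completed: the assertion, read literally for arbitrary $\gq\in\im(\rho_R)$, is false. Take $R$ commutative of Krull dimension at least $1$ (say $R=K[x]$); then $\rho_R$ is the identity, every prime is in $\im(\rho_R)$, yet a non-minimal prime $\gq$ is never the contraction of a minimal prime. So the ``technical heart'' you allude to---deriving a contradiction from a strict sub-prime $\gp_0\subsetneq\gp$ with $\gp_0\cap Z(R)\subsetneq\gq$---simply does not exist in general.

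The paper's proof reveals what is actually being used: it invokes ``the minimality of $\gq$'', i.e.\ it tacitly assumes $\gq\in\min(Z(R))$ (this is also the only case needed in the subsequent application, Lemma~\ref{a25Sep23}). With that extra hypothesis the argument is the short one you started with: pick $\gp'\in\Spec(R)$ with $\gp'\cap Z(R)=\gq$, choose any $\gp\in\min(R)$ with $\gp\subseteq\gp'$; then $\gp\cap Z(R)\in\Spec(Z(R))$ and $\gp\cap Z(R)\subseteq\gq$, whence equality by minimality of $\gq$. Your instinct that ``minimality of $\gp$ alone'' is insufficient was exactly right; the missing ingredient is minimality of $\gq$, not a more elaborate choice of $\gp$.
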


\begin{proof} 2. By (\ref{RqqkZ-1}), the first map is a well-defined map. By the definition  of the localization at the prime ideal $\gq$ of the centre $Z(R)$ of $R$, the first map is a bijection. By (\ref{RqqkZ}), 
$R_\gq/R_\gq \gq\simeq k(\gq)\t_{Z(R)}R$, and so  the second map is a bijection.

1. Statements 1 follows  from statement 2.

3. Since $\gq \in \im (\rho_R)$,  there is a prime ideal $\gp'\in \Spec (R)$ such that $\gp' \cap Z(R)=\gq$. The prime ideal $\gp'$ contains a minimal prime ideal, say $\gp\in \min (R)$.  Notice that  $\gp\cap Z(R)\in \Spec (Z(R))$ and $\gp\cap Z(R)\subseteq \gp'\cap Z(R)=\gq $. Now,  by the minimality of $\gq$,   $\gp \cap Z(R)=\gq$, and the statement (ii) follows.  
\end{proof}

 In general, 
the restriction map $\rho_{R,{\rm min}}: \min (R)\ra \min (Z(R))$, $\gp\mapsto \gp\cap Z(R)$ is  not well-defined, see Lemma \ref{b29Sep23}.
 Let $K$ be a field, $n\in \{ 1,2, \ldots \}\cup \{ \infty\}$, $[n]:=\{i\in \N \, |\, 1\leq i\leq n\}$, $F_n=K\langle x_i\, | \, i\in [n]\rangle$ be a free $K$-algebra, $P_n=K[z_i\, | \, i\in [n]]$ be a polynomial  $K$-algebra and 
$$A_n=F_m\t_K P_n/(x_iz_i\, | \,  i\in  [n]).$$

\begin{lemma}\label{b29Sep23}
\begin{enumerate}
\item $\min (A_n)=\{ \gp_I\, | \,I\subseteq [n]\}$ where $\gp_I=(x_i, z_j)_{i\in I, j\in CI}$ and $CI =[n]\backslash I$.
\item For every $I\subseteq [n]$, $A_n/\gp_I\simeq F_{CI}\t_KP_I$ is a domain  where $F_{CI}=K\langle x_j\, | \, j\in CI\}$ and $P_I=K[z_j\, | \, j\in CI]$.
\item If $n=\infty$ then set $\min (A_\infty)$ is an uncountable set. 
\item $Z(A_n)=P_n$.
\item For every $I\subseteq [n]$, $\gp_I\cap Z(A_n)=(z_j)_{j\in CI}$.
 
\end{enumerate}
\end{lemma}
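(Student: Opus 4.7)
The plan is to prove the five statements in sequence, driven by a single dichotomy. Because each $z_i\in Z(A_n)$ and $x_iz_i=0$, the product of two-sided ideals satisfies $(x_i)(z_i)=0$; hence for every $\gq\in\Spec(A_n)$ and every $i\in[n]$, either $x_i\in\gq$ or $z_i\in\gq$. This is the engine powering statements 1 and 2.

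I would dispatch statement 2 first. For $I\subseteq[n]$, passing to $A_n/\gp_I$ kills $x_i$ for $i\in I$ and $z_j$ for $j\in CI$, so each defining relation $x_kz_k$ becomes $0=0$ in the quotient (at least one factor always dies). The natural map $F_{CI}\otimes_K P_I\to A_n/\gp_I$ sending the surviving $x_j$ and $z_i$ to their classes is therefore a well-defined surjection; a straightforward monomial-basis comparison (the standard normal form for $A_n$ restricted to monomials disjoint from the generators of $\gp_I$) provides a $K$-linear splitting, giving injectivity and hence the claimed isomorphism. Since $F_{CI}\otimes_K P_I$ is the tensor product over a field of a free algebra and a polynomial algebra (both domains), it is itself a domain, so $\gp_I\in\Spec(A_n)$.

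For statement 1, minimality of $\gp_I$: if $\gp\subseteq\gp_I$ is prime, then for each $i\in I$ we have $z_i\notin\gp_I\supseteq\gp$, so the dichotomy forces $x_i\in\gp$; symmetrically $z_j\in\gp$ for $j\in CI$. Hence $\gp\supseteq\gp_I$ and so $\gp=\gp_I$. Conversely, for an arbitrary minimal prime $\gp$, set $I:=\{i : x_i\in\gp\}$; the dichotomy applied to indices in $CI$ yields $z_j\in\gp$ for $j\in CI$, whence $\gp_I\subseteq\gp$, and minimality forces equality. Distinctness of the $\gp_I$ is visible from the quotients of statement 2 (the class of $x_i$ is nonzero iff $i\in CI$, and the class of $z_i$ is nonzero iff $i\in I$). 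Statement 3 is then the cardinality count $|\min(A_\infty)|=|2^{\N}|=2^{\aleph_0}$ via the bijection $I\leftrightarrow\gp_I$.

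For statements 4 and 5, the inclusion $P_n\subseteq Z(A_n)$ is immediate. For the reverse, I would use the $K$-basis of $A_n$ consisting of elements $w\cdot q$ with $w$ a monomial in the $x_i$'s and $q\in K[z_i : i\notin\supp(w)]$ (the normal form produced by the relations $x_iz_i=0$ together with centrality of the $z_i$). Expanding $a\in Z(A_n)$ in this basis as $a=\sum_w w\cdot q_w$ and enforcing $[a,x_j]=0$ for each $j$ -- keeping track of which $z_j$-components are annihilated once $j$ joins the support of the word -- should pin every coefficient $q_w$ for nonempty $w$ to zero, leaving $a=q_1\in P_n$. Statement 5 then follows at once from statements 2 and 4: the composition $P_n\hookrightarrow A_n\twoheadrightarrow A_n/\gp_I\simeq F_{CI}\otimes_K P_I$ is injective on $K[z_i : i\in I]$ and kills precisely the generators $z_j$ for $j\in CI$, so $\gp_I\cap Z(A_n)=\gp_I\cap P_n=(z_j)_{j\in CI}$. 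The main obstacle is the reverse inclusion in statement 4, where the coefficient-matching argument in the chosen basis is the only step that is not a routine verification.
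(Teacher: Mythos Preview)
Your proposal is correct and follows essentially the same approach as the paper: the central dichotomy $(x_i)(z_i)=0\Rightarrow x_i\in\gp$ or $z_i\in\gp$ is exactly the engine the paper uses for statement 1, and the paper simply declares statements 2, 4, and 5 to be ``obvious'' while you spell them out. Your ordering (establishing statement 2 first so that $z_i\notin\gp_I$ for $i\in I$ and $x_j\notin\gp_I$ for $j\in CI$ are available for the minimality argument) is a sensible tightening of the paper's terse presentation.
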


\begin{proof} 2, 4 and 5. Statements 2, 4 and 5 are  obvious.

1. Let $\gp$ be a minimal prime. For all $i\in [n]$,
$$\gp \supseteq \{ 0\} = (x_iz_i)=(x_i)(z_i).$$
By the minimality of $\gp$, precisely one of the elements $x_i$ or $z_i$ belongs to $\gp$, and statement 1 follows. 

3. Statement 3 follows from statement 1 and the fact that the set of all subsets of the set of natural numbers is an uncountable set.
 \end{proof}


\begin{lemma}\label{a25Sep23}
Let $R$ be a semiprime ring with $|\min (R)|<\infty$. Then  the centre $Z(R)$ of $R$ is a semiprime ring with $|\min (Z(R))\cap \im (\rho_R)|\leq |\min (R)|<\infty$.
\end{lemma}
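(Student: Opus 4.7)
The plan is to prove the lemma in two independent parts: semiprimeness of $Z(R)$, and the cardinality bound. Both will be short applications of earlier material, and I don't expect a serious obstacle beyond keeping the logic of $\im(\rho_R)$ straight.

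\textbf{Semiprimeness of $Z(R)$.} Since $Z(R)$ is commutative, it is semiprime iff it is reduced, so it suffices to rule out nonzero nilpotents. If $z\in Z(R)$ satisfies $z^k=0$, then by centrality the ideal $zR$ of $R$ satisfies $(zR)^k=z^kR=0$; that is, $zR$ is a nilpotent ideal of the semiprime ring $R$. Hence $zR=0$, forcing $z=0$. So $Z(R)$ has zero prime radical, i.e.\ it is semiprime.

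\textbf{The cardinality bound.} The strategy is to exhibit an injection
$$\phi:\min(Z(R))\cap \im(\rho_R)\ra \min(R),$$
whose existence immediately yields $|\min(Z(R))\cap \im(\rho_R)|\leq |\min(R)|<\infty$. The construction is supplied by Proposition \ref{A25Sep23}.(3): for each $\gq\in \min(Z(R))\cap \im(\rho_R)$, since $\gq\in \im(\rho_R)$ there exists a minimal prime $\gp\in \min(R)$ with $\gp\cap Z(R)=\gq$; choose one such $\gp$ and set $\phi(\gq):=\gp$.

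\textbf{Injectivity of $\phi$.} This is essentially tautological. If $\phi(\gq_1)=\phi(\gq_2)=\gp$ for some $\gq_1,\gq_2\in \min(Z(R))\cap \im(\rho_R)$, then by the very definition of $\phi$ we have $\gq_i=\gp\cap Z(R)$ for $i=1,2$, whence $\gq_1=\gq_2$. Combining this with the finiteness hypothesis $|\min(R)|<\infty$ completes the proof. The only place where one needs to be careful is not to overstate the result: we are \emph{not} claiming $\rho_{R,\min}$ is a well-defined map (Lemma \ref{b29Sep23} shows it need not be), only that every minimal prime of $Z(R)$ lying in the image of $\rho_R$ is hit by at least one minimal prime of $R$, which is precisely what Proposition \ref{A25Sep23}.(3) provides.
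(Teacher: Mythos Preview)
Your proof is correct and follows essentially the same approach as the paper. For semiprimeness, the paper argues via a nonzero nilpotent ideal of $Z(R)$ extending to a nonzero nilpotent ideal of $R$, while you argue via a single nilpotent element---equivalent for commutative rings; for the cardinality bound, the paper simply cites Proposition~\ref{A25Sep23}.(3) without further comment, and your explicit construction of the injection $\phi$ together with its injectivity check is exactly what that citation unpacks to.
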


\begin{proof}  Suppose that the  centre $Z(R)$ is not  a semiprime ring. Then it contains a nonzero nilpotent ideal, say $\ga$. Then $R\ga$ is a nonzero nilpotent ideal of the semiprime ring $R$, a contradiction. Therefore, the centre $Z(R)$ is a semiprime ring. 
By  Proposition \ref{A25Sep23}.(3), $|\min (Z(R))\cap \im (\rho_R)|\leq |\min (R)|<\infty$.
\end{proof}

\begin{proposition}\label{C25Sep23}
Let $R$ be a semiprime ring with $|\min (R)|<\infty$. Then the following statements  are equivalent: 
\begin{enumerate}
\item $\CC_{Z(R)}\not\subseteq \CC_R$.

\item  $\CC_{Z(R)}\cap \gp\neq \emptyset$  for some  $\gp \in \min (R)$.

\item The restriction map $\min (R)\ra \min (Z(R))$, $\gp\mapsto \gp\cap Z(R)$ is not a well-defined map. 
\end{enumerate}
\end{proposition}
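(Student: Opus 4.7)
The plan is to prove the three-way equivalence $(1)\Leftrightarrow(2)\Leftrightarrow(3)$. The equivalence $(1)\Leftrightarrow(2)$ is purely ring-theoretic and rests on the structural facts established in the proof of Theorem $\ref{A10Sep23}$: for each $\gp\in\min(R)$ the ideal $\gp^c:=\bigcap_{\gq\in\min(R)\setminus\{\gp\}}\gq$ is nonzero with $\gp\gp^c=0$, and $\ann_R(\gp)=\gp^c$, $\ann_R(\gp^c)=\gp$. For $(2)\Rightarrow(1)$: if $c\in\CC_{Z(R)}\cap\gp$ then centrality of $c$ yields $c\gp^c=\gp^c c\subseteq\gp\gp^c=0$ with $\gp^c\neq 0$, whence $c\notin\CC_R$. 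For $(1)\Rightarrow(2)$: given $c\in\CC_{Z(R)}\setminus\CC_R$, centrality makes $\ann_R(c)$ a \emph{two-sided} nonzero ideal of $R$; since $R$ is semiprime, $\ann_R(c)\not\subseteq\gp$ for some $\gp\in\min(R)$, and the inclusion $c\cdot\ann_R(c)=0\subseteq\gp$ together with primeness of $\gp$ forces $c\in\gp$.

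For the equivalence $(2)\Leftrightarrow(3)$ I would use two ingredients: Lemma $\ref{a25Sep23}$, which gives that $Z(R)$ is semiprime, and the standard commutative-algebra fact that in any commutative reduced ring every element of a minimal prime is a zero-divisor (proved by localizing at the minimal prime, where the ideal becomes the nilradical of the resulting local ring, and then using reducedness to clear the nilpotency power). With these in hand, $(2)\Rightarrow(3)$ is immediate: an element $c\in\CC_{Z(R)}\cap\gp$ lies in $\gp\cap Z(R)$ yet is regular in the semiprime ring $Z(R)$, so $\gp\cap Z(R)$ cannot be a minimal prime of $Z(R)$, and $\rho_{R,\min}$ fails to be well-defined.

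The harder direction is $(3)\Rightarrow(2)$, which I would prove contrapositively: assume $\CC_{Z(R)}\cap\gp=\emptyset$ for every $\gp\in\min(R)$ and aim to show each $\gp\cap Z(R)$ lies in $\min(Z(R))$. The key preliminary step is to establish the finiteness bound $|\min(Z(R))|\leq|\min(R)|<\infty$: since $\bigcap_{\gp\in\min(R)}(\gp\cap Z(R))=Z(R)\cap\bigcap_{\gp}\gp=0$, the finite family $\{\gp\cap Z(R):\gp\in\min(R)\}$ of primes of $Z(R)$ has zero product, so Proposition $\ref{A29Sep23}$(1) yields $\min(Z(R))\subseteq\min\{\gp\cap Z(R):\gp\in\min(R)\}$. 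With $|\min(Z(R))|<\infty$ secured, the reduced-ring characterisation of regularity gives $\CC_{Z(R)}=Z(R)\setminus\bigcup_{\gq\in\min(Z(R))}\gq$, so each $\gp\cap Z(R)$ is contained in the finite union $\bigcup_{\gq\in\min(Z(R))}\gq$; prime avoidance yields $\gp\cap Z(R)\subseteq\gq_j$ for some $\gq_j\in\min(Z(R))$, and combining with the fact that $\gp\cap Z(R)$ contains some minimal prime of $Z(R)$ forces $\gp\cap Z(R)=\gq_j\in\min(Z(R))$.

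The main obstacle is precisely this last argument: prime avoidance is needed but would be unavailable for an infinite union of minimal primes of $Z(R)$. The saving trick is that $|\min(Z(R))|<\infty$ is not an extra hypothesis but a free byproduct of $|\min(R)|<\infty$, extracted via Proposition $\ref{A29Sep23}$ applied to the finite family $\{\gp\cap Z(R)\}_{\gp\in\min(R)}$; once this is in place, all three conditions become manifestly equivalent.
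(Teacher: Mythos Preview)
Your proof is correct and follows essentially the same route as the paper: both establish $(1)\Leftrightarrow(2)$ via the annihilator relations $\gp\gp^c=0$ (with $\gp^c\neq 0$), and both establish $(2)\Leftrightarrow(3)$ by invoking the fact that in the reduced commutative ring $Z(R)$ the minimal primes consist entirely of zero-divisors, together with prime avoidance. The only substantive difference is that you explicitly derive $|\min(Z(R))|<\infty$ from $|\min(R)|<\infty$ via Proposition~\ref{A29Sep23}(1) applied to the family $\{\gp\cap Z(R)\}_{\gp\in\min(R)}$ before invoking prime avoidance, whereas the paper uses prime avoidance over $\bigcup_{\gl\in\min(Z(R))}\gl$ without pausing to justify that this union is finite; your extra step makes the argument airtight.
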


\begin{proof} $(1\Rightarrow 2)$ Suppose that $\CC_{Z(R)}\not\subseteq \CC_R$. Then let us fix an element, say $c\in \CC_{Z(R)}\backslash  \CC_R$, i.e. the central element $c$ is a zero divisor in the ring $R$. Therefore, $\ga :=\ann_R (c)$ is nonzero annihilator ideal. It follows from the equality $(c) \ga=0$, that the ideal $(c)=Rc$ of $R$ is contained in the annihilator ideal $\ann_R (\ga)$. Then 
$$c\in \ann_R (\ga)\subseteq \gp\;\; {\rm for\; some}\;\; \gp \in \min (R),$$
(since the ring $R$ is a semiprime ring with $|\min(R)|<\infty$), i.e. $\CC_{Z(R)}\cap \gp\neq 0$.

$( 2\Rightarrow 3)$ Suppose that $\CC_{Z(R)}\cap \gp\neq \emptyset$  for some  $\gp \in \min (R)$.
Notice that  $$\gq :=Z(R)\cap \gp\in \Spec (Z(R)).$$ Then $\gq\not\in \min (Z(R))$ since otherwise $\gq\subseteq \CZ (R)$ where  $\CZ (R):=\bigcup_{\gl\in \min (Z(R))}\gl$ is the set of zero divisors of the semiprime commutative ring $Z(R)$ with $\min (Z(R)|<\infty$ but the non-empty   subset $\CC_{Z(R)}\cap \gp$ of $\gq$ has zero intersection with $\CZ (R)$, a contradiction. 

$(3\Rightarrow 2)$ Suppose that the restriction map $\min (R)\ra \min (Z(R))$, $\gp\mapsto \gp\cap Z(R)$ is not a well-defined map. So, $\gq :=\gp \cap Z(R)\not\in \min (Z(R))$ for some $\gp \in \min (R)$. Notice that  $\gq \in \Spec (Z(R))$. Therefore, 
$$\gq\not\subseteq \CZ (R)=\bigcup_{\gl\in \min (Z(R))}\gl$$
since otherwise $\gq \subseteq \gl$ for some $\gl \in \min (Z(R))$, i.e. $\gq=\gl\in \min (Z(R))$, a contradiction. Then $$\gp\cap \CC_{Z(R)}=\gq\cap \CC_{Z(R)}\neq \emptyset$$ since $\CC_{Z(R)}=Z(R)\backslash \CZ (R)$. 

$(2\Rightarrow 1)$ Suppose that $\CC_{Z(R)}\cap \gp\neq \emptyset$  for some  $\gp \in \min (R)$. Fix an element $c\in \CC_{Z(R)}\cap \gp$.  Recall that $\gp^c=\bigcap_{\gq\in \min (R)\backslash \{ \gp\}}\gq\neq 0$ and $\gp \gp^c=0$. Then 
$c\gp^c\subseteq \gp \gp^c=0$, and so $c\in\CC_{Z(R)}\backslash  \CC_R$.
\end{proof}

\begin{proof} {\bf (Proof of Proposition \ref{B25Sep23})}    The equivalences $(1\Leftrightarrow 2\Leftrightarrow 3)$ are equivalent to Corollary  \ref{aB25Sep23}.

$(1 \Rightarrow 4)$ Suppose that $\CC_{Z(R)}\subseteq \CC_R$. By Lemma \ref{a25Sep23}, the centre $Z(R)$ is a semiprime ring. By the assumption, $|\min (Z(R))|<\infty$. Since the centre $Z(R)$ is  a commutative   semiprime ring with $|\min (Z(R))|<\infty$, 
$$\CC_{Z(R)}=Z(R)\backslash \bigcup_{\gq\in \min (Z(R))}\gq.$$
Hence,  the quotient  ring $Q(Z(R))=\CC_{Z(R)}^{-1}Z(R)$ is a semiprime commutative Artinian ring with $$\min (Q(Z(R)))=\{\CC_{Z(R)}^{-1}\gq\, | \, \gq \in\min (R) \}.$$ Hence, the ring $Q(Z(R))$ is a semisimple Artinian  ring with $\Spec (Z(R))=\min (Z(R))$, and so   the quotient ring  $$Q(Z(R))=\CC_{Z(R)}^{-1}Z(R)\simeq \prod_{\gq\in \min (Z(R))}Z(R)_\gq =\prod_{\gq\in \min (Z(R))}k(\gq )$$ is a finite direct product of fields $Z(R)_\gq =k(\gq )$.  Since $\CC_{Z(R)}\subseteq \CC_R$,
$$\CC_{Z(R)}^{-1}Z(R)\subseteq \CC_{Z(R)}^{-1}Z(R)\simeq \prod_{\gq\in \min (Z(R))}Z(R)_\gq\t_{Z(R)}R\simeq \prod_{\gq\in \min (Z(R))}R_\gq,  $$
and so  the map $\rho_{R,\min}$ is a surjection, by Proposition \ref{A25Sep23} (since $R_\gq = R_\gq /R_\gq\gq$).

$(4\Rightarrow 3)$ The implication is obvious.
\end{proof}

\begin{corollary}\label{aC25Sep23}
Let $R$ be a semiprime ring such that  $|\min (R)|<\infty$,  $|\min (Z(R))|<\infty$ and $\CC_{Z(R)}\subseteq \CC_R$. Then 
\begin{enumerate}

\item  $R\subseteq \CC_{Z(R)}^{-1}R\simeq \prod_{\gq\in \min (Z(R))}R_\gq$.

\item  $Z(\CC_{Z(R)}^{-1}R)=\CC_{Z(R)}^{-1}Z(R)\simeq \prod_{\gq\in \min (Z(R))}Z(R)_\gq$ is a finite direct product of fields $Z(R)_\gq$.

\item  For every $\gq \in \min (Z(R))$,  $R_\gq$ is a semiprime ring, $\min (R_\gq)=\{\gp_\gq\, | \, \gp\in \min (R)\}$ and $|\min (R_\gq )|\leq |\min (R)|<\infty$.

\item  For every $\gq \in \min (Z(R))$,  $Z(R_\gq)\simeq Z(R)_\gq$,  i.e. the  $Z(R)_\gq$-algebra $R_\gq$ is a central algebra. 

\end{enumerate}
\end{corollary}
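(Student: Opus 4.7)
The plan is to deduce the four statements in sequence, all flowing from the localization $Q := \CC_{Z(R)}^{-1}R$, which exists because $\CC_{Z(R)} \subseteq \CC_R$ is central and consists of regular elements, so $\CC_{Z(R)} \in \Den(R,0)$. Statement (1) then has two halves. The inclusion $R \hookrightarrow Q$ is the canonical localization map, since $\ass(\CC_{Z(R)}) = 0$. For the product decomposition, I would observe that $Z(R)$ is commutative semiprime with $|\min(Z(R))| < \infty$, so its total quotient ring $\CC_{Z(R)}^{-1}Z(R)$ is a finite direct product of the residue fields $Z(R)_\gq = k(\gq)$ at its minimal primes. Then, since $Q = \CC_{Z(R)}^{-1}Z(R) \otimes_{Z(R)} R$ and tensor product commutes with finite direct products, we obtain $Q \simeq \prod_\gq (Z(R)_\gq \otimes_{Z(R)} R) \simeq \prod_\gq R_\gq$ via (\ref{RqqkZ}).

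For statement (2), I would first show $Z(Q) = \CC_{Z(R)}^{-1}Z(R)$. The inclusion $\supseteq$ is clear since $\CC_{Z(R)}$ is central. Conversely, if $\xi \in Z(Q)$, write $\xi = c^{-1}r$ with $c \in \CC_{Z(R)}$ and $r \in R$. Then for every $x \in R$, the identity $\xi x = x \xi$ forces $rx = xr$ (multiply by the central unit $c$ on both sides), so $r \in Z(R)$ and $\xi \in \CC_{Z(R)}^{-1}Z(R)$. Combined with (1) and the fact that taking the centre distributes over finite direct products, this gives the stated chain of isomorphisms.

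For statement (3), each $R_\gq$ is a direct factor of the semiprime ring $Q$ (semiprime by Theorem \ref{A10Sep23}.(1)), hence is itself semiprime. To describe $\min(R_\gq)$, I would compare two descriptions of $\min(Q)$. On the one hand, Theorem \ref{A10Sep23}.(2) gives $\min(Q) = \{\CC_{Z(R)}^{-1}\gp : \gp \in \min(R)\}$ bijectively. On the other hand, minimal primes of the finite product $\prod_\gq R_\gq$ decompose as $\bigsqcup_\gq \min(R_\gq)$, each minimal prime sitting in exactly one factor. Matching these two descriptions, and invoking Proposition \ref{B25Sep23} to ensure $\rho_{R,\min}$ is well-defined and surjective, the minimal prime $\CC_{Z(R)}^{-1}\gp$ ends up in the $\gq$-factor precisely when $\gp \cap Z(R) = \gq$, and its image in $R_\gq$ is the localized ideal $\gp_\gq$. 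Hence $\min(R_\gq) = \{\gp_\gq : \gp \in \min(R)\}$, with the bound $|\min(R_\gq)| \leq |\min(R)|$ following since distinct elements of $\min(R_\gq)$ come from distinct $\gp \in \min(R)$.

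Statement (4) is then immediate from (1), (2), and the fact that the centre of a direct product is the product of centres: $\prod_\gq Z(R_\gq) = Z(Q) = \CC_{Z(R)}^{-1}Z(R) = \prod_\gq Z(R)_\gq$, and matching factors yields $Z(R_\gq) \simeq Z(R)_\gq$. The main obstacle I anticipate is in (3): carefully verifying that under Theorem \ref{A10Sep23}.(2) combined with the idempotent decomposition from (1), the minimal prime $\CC_{Z(R)}^{-1}\gp$ really does sit in the $\gq$-factor with $\gq = \gp \cap Z(R)$, and that its image there coincides with the naive localization $\gp_\gq = S_\gq^{-1}\gp$ for $S_\gq = Z(R) \setminus \gq$. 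Once this compatibility is pinned down the remaining bookkeeping is routine.
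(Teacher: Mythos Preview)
Your proposal is correct and follows essentially the same route as the paper: statement (1) via the tensor-product decomposition (which is exactly what the paper's proof of Proposition \ref{B25Sep23} does), statement (2) by the same elementary centre computation, statement (3) via Theorem \ref{A10Sep23}, and statement (4) by comparing centres across the product. Your treatment of (3) is actually more careful than the paper's one-line ``follows from Theorem \ref{A10Sep23}.(1,2)'': you apply the theorem to $Q=\CC_{Z(R)}^{-1}R$ (where $\ass=0$ is guaranteed) and then read off $\min(R_\gq)$ from the product decomposition, rather than localizing at $Z(R)\setminus\gq$ directly, where the assumption $\ass=0$ need not hold. For (4) you cite (1) and (2), whereas the paper cites (1) and (3); your citation is the correct one, and the paper's ``3'' appears to be a typo for ``2''.
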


\begin{proof} 1. Statement 1 was proven in the  proof of Proposition \ref{B25Sep23}.

2.  By the assumption,  $\CC_{Z(R)}\subseteq \CC_R$. Hence, $R\subseteq \CC_{Z(R)}^{-1}R$. Let $c^{-1}r\in Z(\CC_{Z(R)}^{-1}R)$ for some elements $c\in \CC_{Z(R)}$ and $r\in R$. Then $r\in Z(R)$. It follows that $Z(\CC_{Z(R)}^{-1}R)=\CC_{Z(R)}^{-1}Z(R)$. The isomorphism 
 $\CC_{Z(R)}^{-1}Z(R)\simeq \prod_{\gq\in \min (Z(R))}Z(R)_\gq$ was proven in  in the  proof of Proposition \ref{B25Sep23}.

3.  Statement 3 follows  Theorem \ref{A10Sep23}.(1,2).

4.  Statement 4 follows from statements 1 and 3. 
 \end{proof}

{\bf Licence.} For the purpose of open access, the author has applied a Creative Commons Attribution (CC BY) licence to any Author Accepted Manuscript version arising from this submission.\\

{\bf Disclosure statement.} No potential conflict of interest was reported by the author.\\

{\bf Data availability statement.} Data sharing not applicable – no new data generated.

\small{

School of Mathematics and Statistics

University of Sheffield

Hicks Building

Sheffield S3 7RH

UK

email: v.bavula@sheffield.ac.uk}

\end{document}